\newcommand{\N}{\mathbb{N}}
\newcommand{\Z}{\mathbb{Z}}
\newcommand{\R}{\mathbb{R}}
\newcommand{\Q}{\mathbb{Q}}
\newcommand{\C}{\mathbb{C}}
\newcommand{\CP}{\mathbb{CP}}
\newcommand{\RP}{\mathbb{RP}}
\DeclareMathOperator{\image}{im}
\DeclareMathOperator{\OO}{O}
\DeclareMathOperator{\GL}{GL}
\DeclareMathOperator{\SO}{SO}
\DeclareMathOperator{\RRef}{Ref}
\DeclareMathOperator{\Bl}{Bl}
\DeclareMathOperator{\Fix}{Fix}
\DeclareMathOperator{\Aut}{Aut}
\DeclareMathOperator{\Mod}{Mod}
\DeclareMathOperator{\Diff}{Diff}
\DeclareMathOperator{\Homeo}{Homeo}
\DeclareMathOperator{\Isom}{Isom}
\newtheorem{thm}{Theorem}[section]
\newtheorem{cor}[thm]{Corollary}
\newtheorem{prop}[thm]{Proposition}
\newtheorem{lem}[thm]{Lemma}
\newtheorem{defn}[thm]{Definition}
\theoremstyle{remark}
\newtheorem{rmk}[thm]{Remark}
\begin{document}
\title{The Nielsen realization problem for high degree del Pezzo surfaces}
\author{Seraphina Eun Bi Lee}
\date{}
\maketitle
\begin{abstract}
Let $M$ be a smooth $4$-manifold underlying some del Pezzo surface of degree $d \geq 6$. We consider the smooth Nielsen realization problem for $M$: which finite subgroups of $\Mod(M) = \pi_0(\Homeo^+(M))$ have lifts to $\Diff^+(M) \leq \Homeo^+(M)$ under the quotient map $\pi: \Homeo^+(M) \to \Mod(M)$? We give a complete classification of such finite subgroups of $\Mod(M)$ for $d \geq 7$ and a partial answer for $d = 6$. For the cases $d \geq 8$, the quotient map $\pi$ admits a section with image contained in $\Diff^+(M)$. For the case $d = 7$, we show that all finite order elements of $\Mod(M)$ have lifts to $\Diff^+(M)$, but there are finite subgroups of $\Mod(M)$ that do not lift to $\Diff^+(M)$. We prove that the condition of whether a finite subgroup $G \leq \Mod(M)$ lifts to $\Diff^+(M)$ is equivalent to the existence of a certain equivariant connected sum realizing $G$. For the case $d = 6$, we show this equivalence for all maximal finite subgroups $G \leq \Mod(M)$.
\end{abstract}

\section{Introduction}\label{sec:introduction}
For any closed, oriented, smooth manifold $M$, consider the \emph{mapping class group} denoted $\Mod(M) := \pi_0(\Homeo^+(M))$. There is a quotient map of groups $\pi: \Homeo^+(M) \to \Mod(M)$ sending each orientation-preserving homeomorphism $f$ to its isotopy class $[f] \in \Mod(M)$. The \emph{Nielsen realization problem} asks: for which finite subgroups $G \leq \Mod(M)$ does there exist a lift $\tilde G$ of $G$ to $\Homeo^+(M)$? 

The Nielsen realization problem has many refinements: for any reasonable structure on $M$, we may require that the lift $\tilde G$ be contained in the automorphism group $\Aut(M) \leq \Homeo^+(M)$ of this structure. Three well-studied refinements are the \emph{smooth}, \emph{metric}, and \emph{complex} Nielsen realization problems. Note that affirmative answers to the complex and metric Nielsen realization problems imply an affirmative answer to the smooth Nielsen realization problem. 

For surfaces $M$, all three Nielsen realization problems were answered affirmatively for cyclic groups $G \leq \Mod(M)$ by Nielsen (\cite{nielsen}), for solvable groups $G \leq \Mod(M)$ by Fenchel (\cite{fenchel}), and for a general finite group $G \leq \Mod(M)$ by Kerckhoff (\cite{kerckhoff}). For $4$-manifolds, these three Nielsen realization problems were first studied by Farb--Looijenga (\cite{farb--looijenga}), in which they solve the metric and complex Nielsen realization problems for K3 surfaces $M$ (\cite[Theorem 1.2]{farb--looijenga}) and the smooth version for involutions (\cite[Theorem 1.8]{farb--looijenga}). Unlike the case of surfaces, some subgroups $G \leq \Mod(M)$ are realized and some are not. 

The goal of this paper is to solve the smooth Nielsen realization problem for the underlying smooth manifolds $M^4$ of del Pezzo surfaces of high degree. A \emph{del Pezzo surface} is a smooth projective algebraic surface with ample anticanonical divisor class. Any del Pezzo surface is isomorphic to $\CP^1 \times \CP^1$, $\CP^2$, or $\Bl_P\CP^2$ where $P$ is a set of $n$ points (with $1 \leq n \leq 8$) in general position (no three collinear points, no six coconic points, and no eight points on a cubic which is singular at any of the eight points); see \cite[Theorem 8.1.15, Proposition 8.1.25]{dolgachev}. The degree of the blowup $\Bl_P\CP^2$ of $\CP^2$ at $n$ points is $9-n$ and the degree of $\CP^1 \times \CP^1$ is $8$.

The blowup $\Bl_P \CP^2$ of $\CP^2$ at $n$ points is diffeomorphic to the smooth $4$-manifold 
\[
	M_n := \CP^2 \#n\overline{\CP^2}
\]
(see \cite[p. 43]{gompf--stipsicz}). Thus the underlying smooth manifolds of del Pezzo surfaces are $M_n$ with $0 \leq n \leq 8$ and $M_* = \mathbb S^2 \times \mathbb S^2$; we call these manifolds \emph{del Pezzo manifolds}. Throughout this paper, we mostly consider $M_n$ with $n = 0$, $*$, $1$, $2$, or $3$ which are the underlying manifolds of del Pezzo surfaces of degree $d \geq 6$. 

In order to study smooth actions by finite groups on $M_n$, we consider \emph{equivariant connected sums}. For some $k \geq 1$ and all $1 \leq i \leq k$, let $N_i$ be a smooth $4$-manifold with a finite group $G$ acting on $N_i$ by orientation-preserving diffeomorphisms. Under some conditions, we can $G$-equivariantly glue the manifolds $N_i$ at points $p_i \in N_i$ fixed by $G$ or along a $G$-orbit of points in $N_i$ to form a connected sum $N_1 \# \dots \# N_k$ with a smooth $G$-action such that $G$ acts on each $N_i$ in the prescribed way. See Figure \ref{fig:connected-sum} for an illustration of an equivariant connected sum. In this paper, we further impose for each $1 \leq i \leq k$ that $N_i$ or $\overline{N_i}$ be a complex surface on which $G$ acts by biholomorphisms and anti-biholomorphisms; we call such a connected sum a \emph{complex equivariant connected sum}. See Section \ref{sec:cecs} for a more precise definition and discussion.
\begin{figure}
\centering
\includegraphics[width=0.7\textwidth]{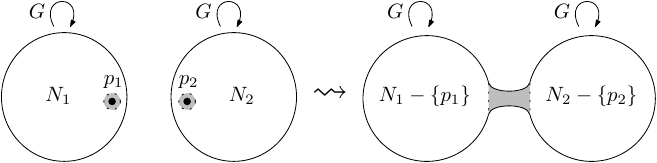}
\caption{An equivariant connected sum $(N_1 \# N_2, G)$. Left: The group $G$ acts by diffeomorphisms on both $N_1$ and $N_2$, fixes $p_1$ and $p_2$, and preserves some neighborhood $U_i$ (in grey) of each $p_i$. Right: A connected sum formed by gluing $U_i - \{p_i\}$ for $i = 1, 2$ in a $G$-equivariant way.}\label{fig:connected-sum}
\end{figure}

\bigskip
\noindent
{\bf{Main results.}} Among the del Pezzo manifolds considered in this paper, only $M_2$ and $M_3$ have infinite mapping class groups; we focus on the smooth Nielsen realization problem for $M_2$ and $M_3$ below. The reader may find the statements and proofs for the cases of $M_0$, $M_*$, and $M_1$ in Section \ref{sec:small-n}.

The following theorem gives a complete solution for the smooth Nielsen realization problem for $M_2$ in terms of the existence of complex equivariant connected sums.
\begin{thm}[\bf{Realizability Classification}]\label{thm:m2}
Let $G \leq \Mod(M_2)$ be a finite subgroup. There exists a lift $\tilde G \leq \Diff^+(M_2)$ of $G$ under $\pi: \Homeo^+(M_2) \to \Mod(M_2)$ if and only if $G$ is realized by a complex equivariant connected sum. In particular, some finite subgroups $G$ are realized by diffeomorphisms and some are not.
\end{thm}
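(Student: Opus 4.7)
The plan is to prove the equivalence in two directions, with nearly all the work concentrated in the forward implication. For the reverse direction, a complex equivariant connected sum realizing $G$ (in the sense of Section~\ref{sec:cecs}) produces, by construction, a smooth orientation-preserving $G$-action on $M_2$. To confirm this gives a lift of the specified subgroup $G \leq \Mod(M_2)$, one checks that the induced action on $H_2(M_2; \Z)$ matches the image of $G$ under $\Mod(M_2) \to \Aut(H_2(M_2; \Z))$ together with the combinatorial data of the connected-sum basepoints; since $\Mod(M_2)$ is detected by such information, this amounts to a matching check rather than a new construction.

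For the forward implication, suppose $\tilde G \leq \Diff^+(M_2)$ is a lift of $G$. On $M_2 = \CP^2 \# 2\overline{\CP^2}$, the $(-1)$-classes in $H_2(M_2; \Z)$ form the finite set $\{\pm E_1,\ \pm E_2,\ \pm(L - E_1 - E_2)\}$, which $G$ permutes while preserving the canonical class $K = -3L + E_1 + E_2$. I would proceed in four steps: first, produce a $G$-invariant collection $\Sigma \subset M_2$ of pairwise disjoint, smoothly embedded $(-1)$-spheres representing a $G$-invariant set of positive $(-1)$-classes; second, equivariantly blow down along $\Sigma$ to obtain a smooth $G$-action on a minimal rational $4$-manifold $N$, diffeomorphic to $\CP^2$ or $\mathbb{S}^2 \times \mathbb{S}^2$; third, invoke the smooth Nielsen realization for $N$ (handled in Section~\ref{sec:small-n} for $M_0$ and $M_*$) to conclude that this action is smoothly conjugate to one by biholomorphisms and anti-biholomorphisms of a complex structure on $N$; fourth, equivariantly blow the resulting complex action back up at the $G$-orbit of the blow-down centers, exhibiting $M_2$ with the $\tilde G$-action as a complex equivariant connected sum realizing $G$.

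The heart of the argument is the first step: the existence of a $G$-invariant collection of disjoint smoothly embedded spheres in the right $(-1)$-classes does not follow from the homological action of $G$ alone. My plan is to fix a $G$-invariant almost-complex structure $J$ on $M_2$ (which exists by averaging over $G$) and apply $J$-holomorphic sphere techniques for small rational surfaces in the style of McDuff and Li--Liu: each $(-1)$-class is represented by an embedded $J$-holomorphic sphere, classes with pairwise zero intersection can be realized by pairwise disjoint such spheres, and $G$-invariance of the resulting moduli spaces is automatic because $J$ is $G$-invariant. Once this step is in hand, the remaining equivariant blow-down and reassembly are relatively formal; in particular, the fact that every smooth finite group action on $\CP^2$ or $\mathbb{S}^2 \times \mathbb{S}^2$ is smoothly conjugate to a projective linear action supplies the needed holomorphic/anti-holomorphic action on the minimal model.
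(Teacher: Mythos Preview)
Your approach is genuinely different from the paper's, and it contains real gaps.

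\textbf{Gap 1: no $G$-invariant almost complex structure.} You write that a $G$-invariant almost complex structure $J$ ``exists by averaging over $G$''. This is false: the space of almost complex structures is not convex, so averaging does not make sense. Worse, there is an obstruction in principle. Any almost complex structure on $M_2$ has $c_1^2 = 2\chi + 3\sigma = 7$, so $c_1 \neq 0$. The element $-I_3 \in \Mod(M_2)$ sends every such $c_1$ to $-c_1$; hence no diffeomorphism representing $-I_3$ can preserve any almost complex structure. Since $-I_3$ lies in most of the finite subgroups at issue (e.g.\ all the maximal ones by Lemma~\ref{lem:max-finite-subgroups}), your $J$-holomorphic sphere argument cannot even get started for those $G$. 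The same observation undermines the claim that $G$ ``preserves the canonical class'': elements like $-I_3$ and $\RRef_H$ send $K$ to $-K$.

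\textbf{Gap 2: linearization of actions on the minimal model.} Your step three ``invokes the smooth Nielsen realization for $N$ (handled in Section~\ref{sec:small-n})''. But Section~\ref{sec:small-n} proves only that the quotient $\pi:\Diff^+(N)\to\Mod(N)$ has a section; it says nothing about whether an \emph{arbitrary} smooth finite group action on $\CP^2$ or $\mathbb S^2 \times \mathbb S^2$ is smoothly conjugate to a linear one. That linearization statement is a deep problem in its own right and is not available as a black box here.

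\textbf{What the paper actually does.} The paper proceeds by enumeration rather than by equivariant minimal models. Using the Coxeter/hyperbolic-reflection structure of $\OO^+(2,1)(\Z)$ (Lemma~\ref{lem:max-finite-subgroups}), every finite subgroup of $\Mod(M_2)$ is conjugate into one of two explicit maximal finite subgroups $G_1 \cong (\Z/2\Z)^3$ and $G_2 \cong D_4 \times \Z/2\Z$. For $G_2$ and for four of the seven order-$4$ subgroups of $G_1$, explicit complex equivariant connected sums are constructed by hand (Propositions~\ref{prop:ex} and~\ref{prop:Z2-D4}). For the remaining three order-$4$ subgroups of $G_1$, the paper shows that \emph{no} lift to $\Diff^+(M_2)$ exists, using Edmonds' fixed-set formula (Proposition~\ref{prop:edmonds}) and the $G$-signature theorem (Theorem~\ref{thm:g-signature}) to derive homological contradictions (Propositions~\ref{prop:non-ex-1} and~\ref{prop:non-ex-3}). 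The equivalence in Theorem~\ref{thm:m2} then follows because every liftable subgroup has been exhibited as a complex equivariant connected sum, and the non-liftable ones are vacuously covered. No equivariant blow-down or $J$-holomorphic curve theory is used.
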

The proof of Theorem \ref{thm:m2} uses the fact that an index $2$ subgroup of $\Mod(M_2)$ is isomorphic to a hyperbolic reflection group, which yields an enumeration of the finite subgroups of $\Mod(M_2)$ up to conjugacy. For each finite subgroup, we either construct a lift to $\Diff^+(M_2)$ by a complex equivariant connected sum or show it does not lift to $\Diff^+(M_2)$ using the theory of finite group actions on $4$-manifolds. See Section \ref{sec:ModM} for an overview of these tools. 

Some consequences of Theorem \ref{thm:m2} and its proof distinguish the Nielsen realization problem for $M_2$ from those of surfaces and K3 manifolds. For example, the proof of Theorem \ref{thm:m2} answers the smooth Nielsen realization problem for finite cyclic subgroups of $\Mod(M_2)$ affirmatively.
\begin{cor}[\bf{Smooth Nielsen realization for cyclic groups}]\label{cor:finite-order-m2}
If $c \in \Mod(M_2)$ has finite order $n$ then there exists $f \in \Diff^+(M_2)$ with order $n$ such that $[f] = c$.
\end{cor}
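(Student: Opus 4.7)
The plan is to deduce this corollary directly from Theorem \ref{thm:m2}. By that theorem, it suffices to exhibit a complex equivariant connected sum realizing the cyclic subgroup $\langle c \rangle \leq \Mod(M_2)$, which yields a lifted subgroup $\tilde H \leq \Diff^+(M_2)$ mapping isomorphically onto $\langle c \rangle$ under $\pi$. Then the generator $f \in \tilde H$ with $[f] = c$ automatically has order exactly $n$: since $\tilde H$ is cyclic of order $n$ in $\Diff^+(M_2)$, we have $f^n = \mathrm{id}_{M_2}$, while $[f^k] = c^k \neq 1$ for $0 < k < n$ rules out any strictly smaller order.

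The remaining task is to realize every finite cyclic subgroup of $\Mod(M_2)$ by a complex equivariant connected sum. The paper indicates that an index $2$ subgroup of $\Mod(M_2)$ identifies with a hyperbolic reflection group acting on the hyperbolic plane associated to $H^2(M_2;\Z)$ with its intersection form. Finite subgroups of such a reflection group are well-understood, so the conjugacy classes of finite cyclic subgroups of $\Mod(M_2)$ form a short, explicit list of small orders. For each such class, I would produce the realization either by taking a del Pezzo surface diffeomorphic to $M_2$ (e.g. $\Bl_P \CP^2$ for a symmetrically chosen triple $P$) equipped with an explicit biholomorphism of the correct order, or by forming an equivariant connected sum of smaller building blocks ($\CP^2$, $\CP^1 \times \CP^1$, or $\overline{\CP^2}$) with cyclically permuted factors. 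Matching the resulting cohomological action to $c$ would then be a linear-algebra check in the intersection lattice, via the embedding $\Mod(M_2) \hookrightarrow \Aut(H^2(M_2;\Z))$.

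The main obstacle is ensuring that \emph{every} conjugacy class on the list admits such a geometric realization, not merely some. This is precisely where the cyclic case is easier than the general finite case treated in Theorem \ref{thm:m2}: with only one group element to lift, there are no compatibility constraints among multiple generators, no nontrivial central extensions to split, and no obstruction coming from a commuting pair of elements whose lifts fail to commute in $\Diff^+(M_2)$. Consequently, I expect each cyclic mapping class to be realized either directly by an automorphism of a single del Pezzo surface or by a straightforward permutation action on a connected-sum decomposition of $M_2$, so that the corollary falls out of the exhaustive case analysis already carried out in the proof of Theorem \ref{thm:m2}.
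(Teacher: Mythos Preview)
Your approach is correct and matches the paper's: the corollary is deduced from the case analysis underlying Theorem~\ref{thm:m2}, by noting that every finite-order element of $\Mod(M_2)$ is conjugate into one of the two maximal finite subgroups $G_1, G_2$, and that each such element already lies in one of the subgroups explicitly realized in Propositions~\ref{prop:ex} and~\ref{prop:Z2-D4}. One small slip: you write ``$\Bl_P\CP^2$ for a symmetrically chosen triple $P$'', but $M_2$ is a blowup at \emph{two} points, not three.
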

We record a specific case of Corollary \ref{cor:finite-order-m2} below to emphasize that the situation differs from that of K3 manifolds, in which the topological isotopy class of any Dehn twist about a $(-2)$-sphere does not lift to any finite order diffeomorphism (see Farb--Looijenga \cite[Corollary 1.10]{farb--looijenga}). 
\begin{cor}[\bf{Twists lift in $\Mod(M_2)$}]\label{cor:dehn-twist-m2}
For any Dehn twist $T$ about a $(-2)$-sphere in $M_2$, there is an order $2$ diffeomorphism $f \in \Diff^+(M_2)$ that is topologically isotopic to $T$.
\end{cor}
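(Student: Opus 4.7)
The plan is to reduce Corollary \ref{cor:dehn-twist-m2} to Corollary \ref{cor:finite-order-m2}: it suffices to verify that the isotopy class $[T] \in \Mod(M_2)$ has order exactly $2$, since applying Corollary \ref{cor:finite-order-m2} to $c = [T]$ will then produce the desired order $2$ diffeomorphism $f \in \Diff^+(M_2)$ with $[f] = [T]$, i.e., $f$ is isotopic to $T$.

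To see that $[T]$ has order $2$, I would combine two standard ingredients. First, by the Picard--Lefschetz formula, $T$ acts on $H^2(M_2;\Z)$ as the reflection
\[
v \longmapsto v + (v \cdot [S])\,[S]
\]
through the class $[S]$ of the $(-2)$-sphere; this is a nontrivial involution since $[S]$ is primitive. Second, because $M_2$ is a closed, simply connected, smoothable topological $4$-manifold, the Freedman--Quinn topological isotopy theorem implies that the natural representation
\[
\Mod(M_2) \longrightarrow \Aut\bigl(H^2(M_2;\Z), Q_{M_2}\bigr)
\]
is injective. These two facts together force $[T]$ to have order exactly $2$ in $\Mod(M_2)$.

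The main (rather modest) obstacle is simply invoking the topological rigidity of simply connected $4$-manifolds correctly; everything else reduces to a direct application of Corollary \ref{cor:finite-order-m2} to an involution in $\Mod(M_2)$. I would also note in passing that this is precisely where the behavior of $M_2$ diverges from that of K3: the analogous Dehn twist still defines an involution in the topological mapping class group of K3, but by Farb--Looijenga \cite[Corollary 1.10]{farb--looijenga} it admits no finite order smooth lift, whereas Corollary \ref{cor:finite-order-m2} removes this obstruction for $M_2$.
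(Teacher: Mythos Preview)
Your proposal is correct and follows essentially the same approach as the paper: the paper's proof is simply the two-sentence observation that a Dehn twist about a $(-2)$-sphere has order $2$ in $\Mod(M_2)$, whence Corollary~\ref{cor:finite-order-m2} applies. You have just made explicit the reasons (Picard--Lefschetz plus Theorem~\ref{thm:freedman--quinn}) why the order is exactly $2$, which the paper takes for granted.
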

One way in which the smooth Nielsen realization problem for $M_2$ differs from that for surfaces in all relevant categories (smooth, metric, complex) is the nonrealizability of some finite subgroups $G \leq \Mod(M_2)$.
\begin{cor}[\bf{A subgroup that doesn't lift, but its elements do}]\label{cor:non-realizable-subgroup}
There exist finite subgroups $G \leq \Mod(M_2)$ that do not have any lift $\tilde G$ to $\Diff^+(M_2)$. In fact, there exist finite subgroups $G \leq \Mod(M_2)$ such that all elements $c \in G$ of order $n$ admit representatives $f \in \Diff^+(M_2)$ with order $n$ but such that $G$ itself does not lift to $\Diff^+(M_2)$.
\end{cor}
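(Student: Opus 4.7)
The plan is to derive this corollary by combining Theorem \ref{thm:m2} with Corollary \ref{cor:finite-order-m2}. The first step is to observe that the ``in fact'' assertion is automatic once the first assertion is established: every element $c$ of any finite subgroup $G \leq \Mod(M_2)$ generates a finite cyclic subgroup, and by Corollary \ref{cor:finite-order-m2} this cyclic subgroup lifts to $\Diff^+(M_2)$ with the correct order. So the whole proof reduces to exhibiting one finite subgroup $G \leq \Mod(M_2)$ with no lift to $\Diff^+(M_2)$.

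By Theorem \ref{thm:m2}, this in turn is equivalent to exhibiting a finite subgroup $G \leq \Mod(M_2)$ that is not realized by any complex equivariant connected sum. The proof of Theorem \ref{thm:m2} comes equipped with a classification of conjugacy classes of finite subgroups of $\Mod(M_2)$, obtained by identifying an index-$2$ subgroup with a hyperbolic reflection group acting on $\mathbb{H}^2$. I would run through this list and single out at least one conjugacy class that the classification marks as non-realizable.

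To certify non-realizability of a chosen $G$, I would use the fact that any complex equivariant connected sum realization of $G$ produces a $G$-invariant orthogonal decomposition of $H^2(M_2;\Z) \cong \Z^{1,2}$ into the cohomology summands of the constituents $N_i$, each of which (or its conjugate) must be a complex surface of very small Picard rank --- so the possible summands are limited to $\CP^2$, $\overline{\CP^2}$, and $\mathbb{S}^2 \times \mathbb{S}^2$. I would look for a $G$ whose action on $H^2(M_2;\Z)$ admits no $G$-invariant orthogonal splitting matching one of these allowed decompositions together with a compatible biholomorphic or anti-biholomorphic $G$-action on each summand.

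The main obstacle is this negative direction: ruling out \emph{every} possible complex equivariant connected sum decomposition requires carefully checking each way to split $M_2$ and each possible finite group action on each summand. Once one such non-liftable $G$ is in hand, Corollary \ref{cor:finite-order-m2} immediately upgrades the statement to the stronger ``in fact'' form by providing same-order lifts for each individual element of $G$.
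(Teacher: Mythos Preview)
Your reduction of the ``in fact'' clause to Corollary~\ref{cor:finite-order-m2} is exactly what the paper does. The divergence is in how you propose to exhibit a non-liftable $G$.

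The paper does not go through Theorem~\ref{thm:m2} or complex equivariant connected sums at all for this step. It simply cites Propositions~\ref{prop:non-ex-1} and~\ref{prop:non-ex-3}, which show \emph{directly} that the subgroups $\langle A,B\rangle$, $\langle A,-B\rangle$, $\langle -AB,-A\rangle\cong(\Z/2\Z)^2$ admit no lift to $\Diff^+(M_2)$. Those propositions work by analyzing the fixed set of a hypothetical lift: Edmonds' Betti-number formula and the $G$-signature theorem force the $2$-dimensional fixed component to have a specific self-intersection number, and this number is then shown to be incompatible with the eigenspace decomposition of $H_2(M_2;\Z)$ under the commuting involution. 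No reference to connected-sum structure is needed.

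Your route---invoke Theorem~\ref{thm:m2} and then rule out every CECS decomposition via $G$-invariant orthogonal splittings of $H^2$---has a genuine gap. For $G=\langle A,B\rangle$ the splitting $\Z\{S_1,S_2\}\oplus\Z\{\Sigma\}$ \emph{is} $G$-invariant, matches $H^2(\CP^1\times\CP^1)\oplus H^2(\overline{\CP^2})$, and both summands \emph{do} carry (anti-)biholomorphic $(\Z/2\Z)^2$-actions inducing the correct maps on cohomology. What actually obstructs the equivariant gluing is the incompatibility of tangential representations at the would-be connect-sum points, which your cohomological criterion does not see. So the method as stated would not certify non-realizability for the very groups that fail to lift. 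There is also a mild circularity: the proof of Theorem~\ref{thm:m2} already relies on Propositions~\ref{prop:non-ex-1} and~\ref{prop:non-ex-3}, so using it as a black box to re-derive the same non-liftability is not an independent argument.
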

\begin{rmk}
The minimal subgroups $G \leq \Mod(M_2)$ of Corollary \ref{cor:non-realizable-subgroup} are isomorphic to $(\Z/2\Z)^2$. However, there exist subgroups of $\Mod(M_2)$ that are isomorphic to $(\Z/2\Z)^2$ that do lift to $\Diff^+(M_2)$.
\end{rmk}

The proof method of Theorem \ref{thm:m2} becomes unwieldy as the sizes of maximal finite subgroups of $\Mod(M_n)$ grow as $n$ grows. Instead of a full solution, we answer the smooth Nielsen realization problem only for maximal finite subgroups of $\Mod(M_3)$ in terms of complex equivariant connected sums.
\begin{thm}[\bf{Realizing maximal finite subgroups}]\label{thm:m3}
Up to conjugation, $\Mod(M_3)$ has three maximal finite subgroups. Two of these have no lifts to $\Diff^+(M_3)$ under $\pi: \Homeo^+(M_3) \to \Mod(M_3)$. One does lift, and is in fact realized by a complex equivariant connected sum.
\end{thm}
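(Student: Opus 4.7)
The plan is to parallel the strategy of Theorem~\ref{thm:m2} but only at the level of maximal finite subgroups. The image of $\Mod(M_3)$ in $\OO^+(H^2(M_3;\Z))$ acts on the hyperbolic $3$-space $\mathbb{H}^3$ obtained from the positive cone of the signature $(1,3)$ intersection form. My first step is to identify the reflection subgroup of this image — generated by reflections in the $(-2)$-classes $e_i - e_j$ and $h - e_1 - e_2 - e_3$ together with any further walls produced by Vinberg's algorithm — as a Coxeter group with a finite-volume fundamental polytope $P \subset \mathbb{H}^3$. Since any finite subgroup of $\Mod(M_3)$ has a fixed point in $\mathbb{H}^3$, the maximal finite subgroups arise as stabilizers of vertices of $P$, enlarged by the outer involution $-\Id$ coming from the $b^+ = 1$ orientation on $H^2$. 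Inspecting the vertices of $P$ should yield exactly three conjugacy classes of maximal finite subgroups.

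For the realizable class I would identify it with the full automorphism group of the toric del Pezzo surface of degree~$6$ extended by an anti-holomorphic involution: a group of order $24$ whose holomorphic part consists of the hexagonal symmetries of the exceptional configuration. To exhibit it as a complex equivariant connected sum in the sense of Section~\ref{sec:cecs}, I would use the toric description: the degree $6$ del Pezzo contains a hexagonal cycle of six $(-1)$-curves whose dihedral symmetry realizes the holomorphic part of the action, and contracting three alternating $(-1)$-curves presents $M_3 \cong \CP^2 \# 3\overline{\CP^2}$ equivariantly under the $S_3$-permutation of the three $\overline{\CP^2}$ summands, with complex conjugation providing the remaining $\Z/2\Z$ factor.

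For the two non-realizable classes I would assume a smooth lift $\tilde G \leq \Diff^+(M_3)$ exists and derive a contradiction using the same tools invoked for Theorem~\ref{thm:m2}: the $G$-signature theorem applied to cyclic subgroups of $\tilde G$ constrains the isolated fixed points and the Euler characteristics and self-intersections of any fixed surfaces, while the known action on $H^2(M_3;\Z)$ determines the Lefschetz numbers of individual elements. For each non-realizable $G$, the argument should combine the constraints from several elements of $\tilde G$ to rule out any consistent smooth fixed-point data.

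The main obstacle is this last step. Since the smooth Nielsen realization problem for cyclic subgroups of $\Mod(M_3)$ should have an affirmative answer in analogy with Corollary~\ref{cor:finite-order-m2}, the obstructions for the non-realizable maximal subgroups must come from how several commuting elements act jointly, not from any single cyclic factor. Pinpointing the right pair or triple of elements whose required fixed data is mutually incompatible — and excluding every possible smooth fixed-point configuration compatible with the individual $G$-signature calculations for each of its cyclic subgroups — is the genuine technical core of the proof.
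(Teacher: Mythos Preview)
Your plan is essentially the paper's own approach: it uses the Coxeter presentation of $\OO^+(3,1)(\Z)$ (Lemma~\ref{lem:max-finite-subgroups}) to enumerate the three maximal finite subgroups, realizes $\langle G_{E_3},-I_4\rangle$ via the degree~$6$ del Pezzo with its Cremona/permutation automorphisms and complex conjugation (Corollary~\ref{cor:w3}), and rules out the other two by combining Edmonds' fixed-set constraints with the $G$-signature theorem applied to commuting involutions (Propositions~\ref{prop:non-realizable-1}, \ref{prop:non-realizable-2}). The only refinement worth noting is that the paper's obstruction arguments isolate a single involution $\sigma(12)$ common to both non-realizable groups, pins down its fixed set as $\mathbb{S}^2\sqcup\mathbb{S}^2$ via a preliminary lemma (Lemma~\ref{lem:no-isolated-points-12-23}), and then derives contradictions from the homology classes of these spheres using Edmonds' linear-independence result (Lemma~\ref{lem:nonzero-homology-2-dim}) --- so the ``right pair'' you anticipate needing is in each case $\sigma(12)$ together with one further generator.
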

\begin{rmk}
More specifically, consider $M_3$ as the complex surface $\Bl_P \CP^2$ where $P = \{[1:0:0], [0:1:0], [0:0:1]\}$. Let $\Aut(M_3) \leq \Diff^+(M_3)$ be the complex automorphism group of $M_3$, and $\tau: M_3 \to M_3$ be the anti-biholomorphism induced by complex conjugation on $\CP^2$. The proof of Theorem \ref{thm:m3} shows that there exists a lift $\tilde G \leq \Diff^+(M_3)$ of $G$ under $\pi: \Homeo^+(M_3) \to \Mod(M_3)$ if and only if $G$ is conjugate to $\pi(\langle \Aut(M_3), \tau \rangle)$ in $\Mod(M_3)$.
\end{rmk}

Finally, we consider the complex Nielsen realization problem for all $4$-manifolds of the form $M = M_*$ and $M_n$ for all $n \geq 0$. 
\begin{thm}[\bf{Smooth not complex}]\label{thm:complex-nielsen}
If $M = M_*$ or $M_n$ for $n \geq 0$ there exist mapping classes $c \in \Mod(M)$ of order $2$ such that there exist involutions $f \in \Diff^+(M)$ with $[f] = c$ (in fact, $c$ is realized by a complex equivariant connected sum) but such that
\begin{enumerate}
\item there exist no biholomorphic involution $f$ with $[f] = c$ for any complex structure of $M$, and 
\item if $M = M_n$ with $n \geq 1$ then there exist no anti-biholomorphic involution $f$ with $[f] = c$ for any complex structure of $M$.
\end{enumerate}
\end{thm}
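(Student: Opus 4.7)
The plan is to exhibit, for each $M$ in the list, a single order-$2$ mapping class $c \in \Mod(M)$ together with a smooth involution realizing it as a complex equivariant connected sum, and then to rule out (anti-)biholomorphic realizations uniformly using the canonical class. For $M_0 = \CP^2$, I would take $c$ to be the generator of $\Mod(\CP^2) \cong \Z/2\Z$, which acts on $H^2$ by $-\Id$ and is realized by complex conjugation. For $M_* = S^2 \times S^2$, I would take $c$ to act on $H^2$ by $-\Id$, realized by the product $\alpha \times \alpha$ of the antipodal maps $\alpha$ on the two $\CP^1$ factors. For $M_n$ with $n \geq 1$, I would construct $c$ as the mapping class of the complex equivariant connected sum (in the sense of Section \ref{sec:cecs}) of complex conjugation on $\CP^2$ with the biholomorphic involution $[x:y:z] \mapsto [x:y:-z]$ on each $\overline{\CP^2}$ summand. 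The gluing is permissible because at a real point $p_0 \in \RP^2 \subset \CP^2$ and at a point $p_i$ on the fixed line $\{z = 0\} \subset \overline{\CP^2}$, both local $\Z/2$-actions on $\C^2$ are $\R$-linear involutions with $2$-dimensional $+1$ and $-1$ eigenspaces, and so are linearly conjugate; this matching of local models is what lets the equivariant construction go through. A direct calculation then shows $c$ acts on $H^2(M_n, \Z) = \Z H \oplus \bigoplus_{i=1}^{n} \Z E_i$ by $H \mapsto -H$ and $E_i \mapsto E_i$.

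The obstruction I would then use is the canonical class. For any complex structure $J$ on $M$, the class $K_J \in H^2(M, \Z)$ is characteristic (as $-K_J \equiv w_2 \pmod 2$). For $M_n$ this forces $K_J = aH + \sum_i b_i E_i$ with $a$ and every $b_i$ odd; for $M_0$ and $M_*$, $K_J$ is a nonzero characteristic element of the intersection lattice. Suppose $g \in \Diff^+(M)$ is a biholomorphism of $(M, J)$ with $[g] = c$. Then $g^* K_J = K_J$, but the identity $c^* K_J = -aH + \sum b_i E_i$ (for $M_n$, using our explicit $c$) or $c^* K_J = -K_J$ (for $M_0, M_*$) together with $c^* = g^*$ yields $a = 0$ or $K_J = 0$, respectively, contradicting the parity or nonzero condition. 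If instead $g$ is anti-biholomorphic, then $g^* K_J = -K_J$ by the standard identification of $g^* \Omega^2_X$ with $\overline{\Omega^2_X}$. Applied to $M_n$ with $n \geq 1$, the identity $c^* K_J = -K_J$ forces each $b_i = 0$, another contradiction, proving part (2) in these cases.

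The main obstacle is the construction of the smooth involution in the $M_n$ case: making the complex equivariant connected sum rigorous requires carefully matching the linear $\Z/2$-models at the chosen fixed points and verifying that the glued involution induces the claimed action on $H^2(M_n)$ (in particular, that the biholomorphisms $[x:y:z] \mapsto [x:y:-z]$ each fix the relevant $(-1)$-class $E_i$ while complex conjugation on $\CP^2$ sends $H$ to $-H$). Once the realization is in place and the fact that each $K_J$ is characteristic is invoked, the canonical class computation is short and uniform, simultaneously obstructing biholomorphic realizations for every $M$ in the list and anti-biholomorphic realizations for $M_n$ with $n \geq 1$.
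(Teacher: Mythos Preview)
Your argument is correct and takes a genuinely different route from the paper's. The paper chooses, for $M_n$ with $n\neq 2$, the class $c=\RRef_H\prod_{k=1}^{n-1}\RRef_{E_k}$ (and $c=\RRef_{E_1}\RRef_{E_2}$ for $n=2$), realizes it via Lemma~\ref{lem:n-geq-1-cecs}, and then obstructs (anti-)holomorphic realizations by analyzing the fixed set: Edmonds's Betti-number formula and the $G$-signature theorem force $\Fix(f)\cong S\sqcup\{p\}$ with $S$ nonorientable, and Lemma~\ref{lem:complex-aut-obstruction} rules out biholomorphisms (nonorientable fixed surface) and anti-biholomorphic involutions (isolated fixed point). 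For $M_*$ the paper invokes Friedman--Qin to reduce to Hirzebruch surfaces and then runs a degree argument on a section. By contrast, you pick $c=\RRef_H$ uniformly for all $M_n$ and use only the naturality of the canonical class: $K_J$ is characteristic, hence odd in every coordinate of $(H,E_1,\dots,E_n)$, and the equations $g^*K_J=K_J$ (biholomorphic) and $g^*K_J=-K_J$ (anti-biholomorphic) are incompatible with $c^*$ flipping $H$ and fixing each $E_i$. This avoids Edmonds, the $G$-signature theorem, and the Friedman--Qin classification entirely, at the cost of not saying anything about the fixed-set topology; the paper's approach, while heavier, pins down $\Fix(f)$ explicitly.

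One small point to tighten: for $M_*=S^2\times S^2$ the intersection form is even, so ``characteristic'' alone does not give $K_J\neq 0$. You should invoke $K_J^2=c_1^2=2\chi(M_*)+3\sigma(M_*)=8$ (valid since any biholomorphism in $\Diff^+(M_*)$ forces the complex orientation to agree with the given one) to conclude $K_J\neq 0$ before running the $c^*K_J=-K_J=K_J$ contradiction. With that addition your argument is complete.
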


\bigskip
\noindent
{\bf{Related work.}} Hambleton--Tanase (\cite[Theorem A]{hambleton--tanase}) shows that if $G = \Z/p\Z$ acts smoothly on $\#n \CP^2$ for $n \geq 1$ and $p$ is an odd prime then there exists an equivariant connected sum of linear actions on $\CP^2$ with the same fixed-set data (see \cite{hambleton--tanase} for the exact description of this data) and the same induced action on $H_2(\#n \CP^2; \Z)$. Their method is to analyze the equivariant Yang--Mills moduli space to produce a stratified $G$-equivariant cobordism between $(\#n \CP^2, G)$ and an equivariant connected sum of linear actions on $\CP^2$. Our results are similar in flavor in that we relate the existence of smooth actions on del Pezzo manifolds to the existence of complex equivariant connected sums. However, our methods are much more elementary than those of \cite{hambleton--tanase} and conversely yield less refined results in terms of the fixed sets.

Finite group actions by complex or symplectic automorphisms of blowups of $\CP^2$ have also been well-studied. All possible groups appearing as the complex automorphism groups of del Pezzo surfaces and their actions on their second homology groups have been determined by Dolgachev--Iskovskikh (\cite{dolgachev--iskovskikh}). Finite groups of symplectic automorphisms of blowups of $\CP^2$ have also been studied by Chen--Li--Wu (\cite{chen--li--wu}).

The existence of order-$2$ mapping classes of $4$-manifolds that do not lift to an order-$2$ diffeomorphism was known in a few cases; see Raymond--Scott (\cite[Theorem 1]{raymond--scott}) for the case of certain nil-manifolds (in every dimension $d \geq 3$) and Baraglia--Konno (\cite[Theorem 1.2]{baraglia--konno}) for the case of the K3 manifold. On the other hand, recent work of Farb--Looijenga (\cite{farb--looijenga}) more systematically addresses the metric, complex, and smooth Nielsen realization problems for the K3 manifold. Their result on the nonrealizability of Dehn twists in the K3 manifold by finite-order diffeomorphisms (\cite[Corollary 1.10]{farb--looijenga}) was later extended to all smooth spin $4$-manifolds with non-zero signature by Konno (\cite[Theorem 1.1]{konno}).

\bigskip
\noindent
{\bf{Outline of paper.}} In Section \ref{sec:ModM} we review the relevant facts about the mapping class groups of del Pezzo manifolds and their relationship to hyperbolic reflection subgroups. In Section \ref{sec:small-n} we detail the elementary cases of the smooth Nielsen realization problem for $M_0$, $M_*$, and $M_1$. In Section \ref{sec:m2-m3} we review some results on finite group actions on $4$-manifolds (Subsection \ref{sec:finite-gp}) and apply them in Subsection \ref{sec:m2} to classify all finite subgroups of $\Mod(M_2)$ which lift to $\Diff^+(M_2)$ and prove Theorem \ref{thm:m2} and its corollaries. We then apply similar techniques to obtain a partial result for finite subgroups of $\Mod(M_3)$ in Subsection \ref{sec:m3}. Finally, we address the complex Nielsen realization problem for $M = M_*$ and $M_n$ for all $n \geq 0$ in Subsection \ref{sec:complex-automorphisms}.

\bigskip
\noindent
{\bf{Acknowledgements.}}
I am grateful to Benson Farb for his continued support and guidance throughout every step of this project, from suggesting this problem to commenting on many previous drafts. I thank both Farb and Eduard Looijenga for sharing an early draft of their paper (\cite{farb--looijenga}) with me, which helped shape this project in its beginnings. I thank Hokuto Konno for bringing many relevant references on Nielsen realization for $4$-manifolds to my attention. I thank Danny Calegari and Shmuel Weinberger for their helpful answers to my questions about mapping class groups of and finite group actions on $4$-manifolds, and R. \.{I}nan\c{c} Baykur, Dan Margalit, Anubhav Mukherjee, and an anonymous referee for their comments on an earlier draft that improved the exposition of this paper.

\section{Mapping class groups of del Pezzo manifolds}\label{sec:ModM}

In this section we outline the tools used to study the finite subgroups of mapping class groups of del Pezzo manifolds.

\subsection{Mapping class groups of del Pezzo manifolds}

The mapping class groups $\Mod(M) := \pi_0(\Homeo^+(M))$ of closed, oriented, and simply connected $4$-manifolds are computable due to the following theorems of Freedman and Quinn.
\begin{thm}[Freedman \cite{freedman}, Quinn \cite{quinn}]\label{thm:freedman--quinn}
Let $M^4$ be a closed, oriented, and simply connected manifold. The map
\[
	\Phi: \Mod(M) \to \Aut(H_2(M; \Z), Q_M)
\]
given by $\Phi: [f] \mapsto f_*$ is an isomorphism of groups.
\end{thm}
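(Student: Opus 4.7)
The plan is to decompose the statement into the three standard pieces---well-definedness/homomorphism, injectivity, surjectivity---and to pinpoint which pieces are formal and which require the deep topological input of Freedman and Quinn.

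First I would verify that $\Phi$ is a well-defined group homomorphism. Any orientation-preserving homeomorphism $f \colon M \to M$ induces an automorphism $f_*$ of $H_2(M;\Z)$, and because $f$ preserves the cup-product pairing on $H^2$ (and hence, by Poincar\'e duality, the intersection form $Q_M$ on $H_2$), we have $f_* \in \Aut(H_2(M;\Z), Q_M)$. Homotopy invariance of singular homology shows $f_*$ depends only on $[f] \in \Mod(M)$, and functoriality of $H_2$ gives the homomorphism property. None of this uses the dimension-$4$ hypothesis.

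For injectivity, I would invoke Quinn's isotopy theorem for simply connected topological $4$-manifolds: any orientation-preserving self-homeomorphism of $M$ inducing the identity on $H_2(M;\Z)$ is topologically isotopic to $\Id_M$. This is not something I would re-prove; it rests on Quinn's controlled $h$-cobordism and handle straightening machinery in dimension $4$.

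For surjectivity, I would appeal to Freedman's classification: any closed, oriented, simply connected topological $4$-manifold is determined up to orientation-preserving homeomorphism by its intersection form $Q_M$ together with the Kirby--Siebenmann invariant. Given $\phi \in \Aut(H_2(M;\Z), Q_M)$, pull back the standard structure on $M$ via $\phi$ to obtain a second ``$4$-manifold data set'' with the same intersection form and Kirby--Siebenmann invariant as $M$; Freedman's theorem then produces an orientation-preserving homeomorphism $f\colon M \to M$ inducing $\phi$ on $H_2$. The genuinely hard step here is Freedman's disk embedding theorem underlying this classification; that is the main obstacle, but it is black-boxed for the purposes of the current paper. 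Once surjectivity and injectivity are in hand, the theorem follows immediately.
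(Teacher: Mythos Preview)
Your decomposition into well-definedness, injectivity (via Quinn's isotopy theorem), and surjectivity (via Freedman's classification) is the standard and correct way to understand this result. Note, however, that the paper does not supply a proof of this theorem at all: it is stated as a background result with citations to \cite{freedman} and \cite{quinn}, and is used as a black box throughout. So there is no ``paper's own proof'' to compare against; your sketch is a faithful summary of what one finds in those references.
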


The Mayer--Vietoris sequence implies that $H_2(M_n; \Z) = H_2(\CP^2; \Z) \oplus H_2(\overline{\CP^2}; \Z)^{\oplus n}$ and gives a natural $\Z$-basis $\{H, E_1, \dots, E_n\}$ with intersection form $Q_{M_n} \cong \langle 1 \rangle \oplus n \langle -1 \rangle$; the group $\Aut(H_2(M; \Z), Q_{M_n})$ is the indefinite orthogonal group $\OO(1,n)(\Z)$, i.e. by Freedman--Quinn (Theorem \ref{thm:freedman--quinn}),
\[
	\Mod(M_n)\cong \OO(1,n)(\Z).
\]
We will identify $\Aut(H_2(M; \Z), Q_M)$ and $\Mod(M)$ for all $M$ in the rest of this paper. 

On the other hand, there is a diffeomorphism $M_2 \cong (\CP^1 \times \CP^1) \# \overline{\CP^2}$. So in addition to the standard $\Z$-basis $\{H, E_1, E_2\}$ of $H_2(M_2; \Z)$, there is another natural $\Z$-basis $\{S_1, S_2, \Sigma\}$ of homology corresponding to the decomposition $H_2(M_2) \cong H_2(\CP^1 \times \CP^1; \Z) \oplus H_2(\overline{\CP}^2; \Z)$. The lattice $(H_2(\CP^1 \times \CP^1; \Z), Q_{\CP^1 \times \CP^1})$ has two isotropic generators $S_1$ and $S_2$ with $Q_{\CP^1 \times \CP^1}(S_1, S_2) = 1$ coming from the factors of the product $\CP^1 \times \CP^1$.

Combining the diffeomorphism $M_n \cong (\CP^1 \times \CP^1) \# (n-1)\overline{\CP}^2$ for $n \geq 2$ with Theorem \ref{thm:freedman--quinn} and applying \cite[Theorem 2]{wall-diffeos} to $M_n$ yields the following statement. (The same statement holds for $M_0$, $M_*$, and $M_1$ but these cases will be handled in Section \ref{sec:small-n}.)
\begin{thm}[{A rephrasing of \cite[Theorem 2]{wall-diffeos}}]\label{thm:diffeo-realizable}
For $M = M_*$ or $M_n$ with $2 \leq n \leq 9$, the restriction of $\pi: \Homeo^+(M) \to \Mod(M)$ to the subgroup $\Diff^+(M) \leq \Homeo^+(M)$ is surjective.
\end{thm}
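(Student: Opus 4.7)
The plan is to reduce the statement to Wall's classical theorem on diffeomorphisms of $4$-manifolds (\cite{wall-diffeos}), which asserts that every isometry of the intersection form of a closed, oriented, simply connected smooth $4$-manifold $M$ is induced by an orientation-preserving self-diffeomorphism, provided $M$ admits an $S^2 \times S^2$ connected summand.

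First I would verify this stability hypothesis for each $M$ in the theorem. For $M = M_* = S^2 \times S^2$ the hypothesis is immediate. For $M = M_n$ with $n \geq 2$, I would invoke the standard diffeomorphism
\[
\CP^2 \# \overline{\CP^2} \# \overline{\CP^2} \cong (S^2 \times S^2) \# \overline{\CP^2},
\]
which yields $M_n \cong (S^2 \times S^2) \# (n-1)\overline{\CP^2}$, exhibiting the required $S^2 \times S^2$ summand.

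With the hypothesis verified, combining Wall's theorem with Freedman--Quinn (Theorem \ref{thm:freedman--quinn}) finishes the argument: Freedman--Quinn identifies $\Mod(M)$ with $\Aut(H_2(M; \Z), Q_M)$ via $[f] \mapsto f_*$, and Wall's theorem produces, for each $\varphi$ in this group, a diffeomorphism $f \in \Diff^+(M)$ with $f_* = \varphi$. Hence every mapping class admits a smooth orientation-preserving representative, i.e. $\pi|_{\Diff^+(M)}$ is surjective.

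The real content is in Wall's theorem itself, which is being invoked as a black box. Its proof constructs explicit ``plug'' diffeomorphisms supported in the $S^2 \times S^2$ summand that realize reflections through classes of square $-2$, and combines these with diffeomorphisms that permute or reflect the remaining connected summands to realize generators of $\OO(1, n)(\Z)$. The hard part, were one to reprove it from scratch, would be this generation statement: that the explicit reflections realizable in the $S^2 \times S^2$ summand already suffice to generate \emph{all} orthogonal automorphisms of the indefinite lattice.
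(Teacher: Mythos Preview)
Your proposal is correct and matches the paper's approach exactly. The paper does not give a formal proof environment for this theorem; instead, the sentence immediately preceding the statement supplies the derivation---combine the diffeomorphism $M_n \cong (\CP^1 \times \CP^1) \# (n-1)\overline{\CP^2}$ for $n \geq 2$ with Theorem~\ref{thm:freedman--quinn} and apply \cite[Theorem 2]{wall-diffeos}---which is precisely your argument.
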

\begin{rmk}
Theorem \ref{thm:diffeo-realizable} cannot be extended to manifolds $M_n$ for $n \geq 10$; Friedman--Morgan (\cite[Theorem 10]{friedman--morgan}) shows that the image of the quotient $\pi|_{\Diff^+(M_n)}: \Diff^+(M_n) \to \Aut(H_2(M_n; \Z), Q_{M_n})$ has infinite index in $\Aut(H_2(M_n; \Z), Q_{M_n})$ for all $n \geq 10$.
\end{rmk}

\subsection{Complex equivariant connected sums}\label{sec:cecs}
A definition of equivariant connected sums can be found in \cite[Section 1.C]{hambleton--tanase}. We include the definition here for the convenience of the reader. 

Let $N_1, N_2$ be smooth manifolds and $G$ a finite group. Suppose $G \leq \Diff^+(N_i)$ for $i = 1, 2$ and that there are points $p_i \in N_i$ for $i = 1, 2$ such that $p_i$ is fixed by all $g \in G$ and the tangential representations $G \to \SO(T_{p_i}N_i)$ are equivalent by an orientation-reversing isomorphism $\rho: T_{p_1}N_1 \to T_{p_2}N_2$. By the equivariant tubular neighborhood theorem (\cite[Theorem VI.2.2]{bredon}), there exist $G$-invariant neighborhoods of $p_i \in N_i$ for each $i = 1, 2$ which are $G$-equivariantly diffeomorphic to $T_{p_i}N_i$. We can now form as usual a connected sum $N_1 \# N_2$ by taking the $G$-equivariant neighborhoods of $p_1$ and $p_2$ in $N_1 - p_1$ and $N_2-p_2$ respectively and equivariantly identifying concentric annuli around $p_1$ and $p_2$ via the orientation-reversing map $\rho$. Then the connected sum $N_1 \# N_2$ has a natural smooth action of $G$. The $G$-manifold $(N_1 \# N_2, G)$ is called an \emph{equivariant connected sum}. 

More generally, suppose that $G \leq \Diff^+(N_1)$ is finite. Let $H \leq G$ be any subgroup acting smoothly on $N_2$, i.e. there is a homomorphism $H \to \Diff^+(N_2)$. Then the twisted product $G \times_H N_2$ is diffeomorphic to a disjoint union of $\lvert G/H\rvert$-many copies of $N_2$. Note that $G\times_HN_2 \not\cong G/H \times N_2$ as $G$-spaces; see \cite[Chapter I, Section 6(A)]{bredon} for more details. 

Suppose there exist points $p_1 \in N_1$ and $p_2 \in N_2$, both with stabilizers equal to $H$ such that the tangential representations $H \to \SO(T_{p_i}N_i)$ are equivalent by an orientation-reversing isomorphism $T_{p_1}N_1 \to T_{p_2}N_2$. By \cite[Theorem VI.2.2]{bredon} again, there exist $H$-invariant tubular neighborhoods of the points in the $G$-orbits of $p_1 \in N_1$ and $(1, p_2) \in G\times_HN_2$. The $G$-equivariant identification of these $H$-invariant neighborhoods forms a connected sum denoted by $N_1 \# (G \times_HN_2)$ with a natural smooth action of $G$. Letting $m = \lvert G/H\rvert$, note that $N_1 \# (G \times_HN_2)$ is diffeomorphic to $N_1 \# m N_2$. The $G$-manifold $(N_1\#mN_2, G)$ is also called an \emph{equivariant connected sum.} Note that the first construction $(N_1 \# N_2, G)$ is a special case of this more general construction with $H = G$. 

With these definitions in mind, we define a \emph{complex equivariant connected sum}. 
\begin{defn}\label{defn:cecs}
Let $M$ be a smooth manifold and let $G\leq \Diff^+(M)$ be finite. The pair $(M, G)$ is called a \emph{complex equivariant connected sum} if one of the following holds:
\begin{enumerate}
	\item $(M, G)$ is $G$-equivariantly diffeomorphic to $N$ or $\overline N$, where $N$ is a complex manifold and each $g \in G \leq \Diff^+(N)$ is biholomorphic or anti-biholomorphic, or
	\item $(M, G)$ is $G$-equivariantly diffeomorphic to an equivariant connected sum $(N_1 \# (G\times_H N_2), G)$, where $(N_1, G)$ and $(N_2, H)$ are complex equivariant connected sums with $H \leq G$.
\end{enumerate}
\end{defn}

If $G_0 \leq \Mod(M)$ is a finite group such that there exists a complex equivariant connected sum $(M, G)$ and $G \leq \Diff^+(M)$ is a lift of $G_0$ under the quotient $\pi: \Homeo^+(M) \to \Mod(M)$ then we say that $G_0$ is \emph{realizable by a complex equivariant connected sum}.

\subsection{The group $\OO(1,n)(\Z)$ and the hyperboloid model}\label{sec:o-1-n}
Fix $n \in \N$. Consider the vector space $\R^{n+1}$ with the diagonal binary symmetric form $Q_n$ of signature $(1,n)$
\[
	Q_n((x_0, x_1, \dots, x_n), (y_0, y_1, \dots, y_n)) = x_0 y_0 - x_1 y_1 - \dots - x_n y_n.
\] 
We denote the pair $(\R^{n+1}, Q_n)$ by $\mathbb E^{1,n}$ and the pair $(\R^{n+1}, R_n)$ by $\mathbb E^{n,1}$ where $R_n = -Q_n$. There is a natural isometric inclusion $(H_2(M_n; \Z), Q_{M_n}) \hookrightarrow \mathbb E^{1,n}$; using this embedding, identify $\R^{n+1}$ with the $\R$-span of the $\Z$-basis $\{H, E_1, \dots, E_n\}$ of $H_2(M_n; \Z)$ which makes the $\R$-bilinear extension of $Q_{M_n}$ coincide with $Q_n$. Under this identification, $Q_n(H, H) = -R_n(H,H) = 1$ and $Q_n(E_k, E_k) = -R_n(E_k, E_k) = -1$ for all $1 \leq k \leq n$. 

Let $\OO(n,1)(\R) \leq \GL(n+1)(\R)$ be the group of matrices preserving the form $R_n$. The group $\OO(n,1)(\Z)$ is the subgroup of integral matrices of $\OO(n,1)(\R)$. Every $v \in \R^{n+1}$ with $R_n(v,v) = \pm 1, \pm 2$ defines a \emph{reflection} $\RRef_v \in \OO(n,1)(\R)$ about $v$ by
\[
	\RRef_v(w) := w - \frac{2R_n(v,w)}{R_n(v,v)}v = w - \frac{2Q_n(v,w)}{Q_n(v,v)}v.
\]
If $v \in \Z^{n+1} \subseteq \mathbb E^{n,1}$ then $\RRef_v\in \OO(n,1)(\Z)$.

Consider the submanifold 
\[
	\mathbb H^n = \{v = (v_0, v_1, \dots, v_n)\in \R^{n+1} : R_n(v,v) = -1, \, v_0 > 0\};
\] 
the restriction of $R_n$ to $\mathbb H^n$ defines a Riemannian metric on $\mathbb H^n$. As the notation suggests, this Riemannian manifold is isometric to the hyperbolic $n$-space and is called the \emph{hyperboloid model} (see \cite[Chapter 2]{thurston}). Let $\OO^+(n,1)(\R)$ denote the index $2$ subgroup of $\OO(n,1)(\R)$ that preserves the submanifold $\mathbb H^n$. This is the isometry group $\Isom(\mathbb H^n)$ of $\mathbb H^n$. The subgroup $\OO^+(n,1)(\Z)$ is defined to be the subgroup of integral matrices of $\OO^+(n,1)(\R)$, so $\OO^+(n,1)(\Z)$ is a discrete subgroup of $\Isom(\mathbb H^n)$. Finally, we observe that $\OO(1,n)(\R) = \OO(n,1)(\R)$ as subgroups of $\GL(n+1)(\R)$. 

\subsection{Coxeter theory}\label{sec:coxeter}
According to Vinberg (\cite{vinberg}), the groups $\OO^+(n,1)(\Z)$ each contain a finite index, hyperbolic reflection subgroup acting by isometries on $\mathbb H^n$ with a fundamental domain of finite volume for all $n \leq 17$. It turns out that for $n \leq 9$, the maximal reflection subgroup of $\OO^+(n,1)(\Z)$ is $\OO^+(n,1)(\Z)$ itself. Explicit generators for each $\OO^+(n,1)(\Z)$ are also determined in \cite{wall-indefinite-orthogonal}.
\begin{thm}[{Wall, \cite[Theorem 1.5, 1.6]{wall-indefinite-orthogonal}}]\label{thm:reflections-2,3}
For $n = 2$ and $3$, the groups $\OO^+(n,1)(\Z)$ are:
\begin{align*}
	\OO^+(2,1)(\Z) &= \langle \RRef_{H-E_1-E_2}, \, \RRef_{E_1-E_2}, \, \RRef_{E_2} \rangle, \\
	\OO^+(3,1)(\Z) &= \langle \RRef_{H-E_1-E_2-E_3},\, \RRef_{E_1-E_2},\, \RRef_{E_2-E_3},\, \RRef_{E_3}  \rangle.
\end{align*}
\end{thm}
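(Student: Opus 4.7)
The plan is to prove each equality via Vinberg's algorithm: construct a fundamental polyhedron $P \subset \mathbb{H}^n$ for the maximal reflection subgroup $W \leq \OO^+(n,1)(\Z)$, verify that the listed reflections generate $W$ and that $P$ has finite hyperbolic volume, and then show that the setwise stabilizer of $P$ inside $\OO^+(n,1)(\Z)$ is trivial. I would work in the hyperboloid model as in Section~\ref{sec:o-1-n}, viewing $\OO^+(n,1)(\Z)$ as a discrete subgroup of $\Isom(\mathbb{H}^n)$ and each $\RRef_v$ (for an integer vector with $R_n(v,v) \in \{1, 2\}$) as the isometric reflection in the hyperplane $v^{\perp} \cap \mathbb{H}^n$. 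The reflection subgroup $W$ is automatically normal in $\OO^+(n,1)(\Z)$, since conjugation by any $g$ carries $\RRef_v$ to $\RRef_{gv}$.

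First I would fix a base point $p_0 \in \mathbb{H}^n$ representing the direction of the anticanonical class $-K = 3H - \sum_{i=1}^{n} E_i$, which satisfies $Q_n(-K,-K) = 9 - n > 0$ for $n = 2, 3$ and hence lies in the positive cone. Vinberg's algorithm then enumerates integer roots $v$ in order of increasing distance from $p_0$ to the hyperplane $v^{\perp}$, retaining $v$ exactly when $v^{\perp}$ genuinely bounds the polyhedron cut out by previously retained roots. For $n = 2$ a direct calculation shows the first three retained roots are $H - E_1 - E_2$, $E_1 - E_2$, and $E_2$, with pairwise dihedral angles $\pi/2$, $\pi/4$, and $0$ (the last pair meeting at an ideal vertex). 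For $n = 3$ the analogous calculation produces the four stated walls, with a path-shaped Coxeter diagram of labels $4, 4, 3$. In each case Vinberg's termination criterion, applied to the Gram matrix of the accepted roots, certifies that $P$ has finite volume and that no further roots contribute walls; hence the listed reflections generate $W$ and $P$ is a fundamental domain for $W$.

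Finite covolume of $W$ together with $W \trianglelefteq \OO^+(n,1)(\Z)$ yields the standard decomposition $\OO^+(n,1)(\Z) = W \rtimes \mathrm{Sym}(P)$, where $\mathrm{Sym}(P) \leq \OO^+(n,1)(\Z)$ is the setwise stabilizer of $P$. The main obstacle is to show $\mathrm{Sym}(P) = 1$, since a priori $\OO^+(n,1)(\Z)$ could contain lattice isometries permuting the walls of $P$ without being reflections. I would handle this by inspection of the labeled Coxeter diagrams produced above: the $n = 2$ diagram is a path with labels $\infty$ and $4$, and the $n = 3$ diagram is a path with labels $4, 4, 3$, neither of which admits a nontrivial automorphism. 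Any element of $\mathrm{Sym}(P)$ must induce such a diagram automorphism, forcing $\mathrm{Sym}(P) = 1$ and hence $W = \OO^+(n,1)(\Z)$, as claimed.
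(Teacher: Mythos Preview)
The paper does not supply a proof of this theorem: it is quoted as a result of Wall (with the remark that the $n=2$ case is classically due to Fricke), and the surrounding text simply points to Vinberg's work for the general mechanism. So there is no ``paper's own proof'' to compare against.

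Your proposed route via Vinberg's algorithm is correct and is the standard way to establish such statements; indeed it is precisely what the paper alludes to in the paragraph preceding the theorem. The computations you sketch are accurate: for $n=2$ the three roots give a triangle with angles $\pi/2$, $\pi/4$, $0$ (the Coxeter diagram is the path $\RRef_{H-E_1-E_2}$---$\RRef_{E_2}$---$\RRef_{E_1-E_2}$ with labels $\infty$, $4$), and for $n=3$ the four roots give a finite-volume simplex whose diagram is the path with labels $4,4,3$. Neither labeled path has a nontrivial automorphism, so $\mathrm{Sym}(P)=1$ and $W=\OO^+(n,1)(\Z)$. One small inaccuracy: in Vinberg's algorithm with basepoint $-K$, the root $E_1-E_2$ passes through the basepoint and is retained first, not $H-E_1-E_2$; this does not affect the argument. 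You might also make explicit the last step---that an element of $\mathrm{Sym}(P)$ inducing the trivial diagram automorphism must fix each outward normal $v_i$, and since these span $\R^{n+1}$ the element is the identity---but this is routine.
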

\begin{rmk}
Another way to phrase the first half of Theorem \ref{thm:reflections-2,3} is that $\OO^+(2,1)(\Z)$ is the triangle group $\Delta(2, 4, \infty)$. This formulation is classical, shown by Fricke in \cite[p. 64-68]{fricke}.
\end{rmk}

In particular, the groups $\OO^+(n,1)(\Z)$ for $n = 2, 3$ are Coxeter groups and their Coxeter diagrams are given in Figure \ref{fig:coxeter-diagrams}. Denote the Coxeter system given by the diagrams in Figure \ref{fig:coxeter-diagrams} by $(W(n), S(n))$ for $n = 2, 3$. 

\begin{figure}
\centering
\includegraphics{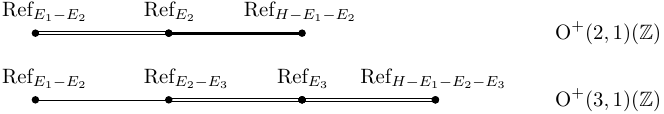}
\caption{Coxeter diagrams for $\OO^+(2,1)(\Z)$ and $\OO^+(3,1)(\Z)$.}
\label{fig:coxeter-diagrams}
\end{figure}

For the sake of completeness, we emphasize that the \emph{geometric representation} of $(W(n), S(n))$, defined using the conventions of \cite[Section 5.3]{humphreys}, coincides with the action of $\OO^+(n,1)(\Z)$ on $\mathbb E^{n,1}$. Let $V_n$ be the $\R$-span of $\{\alpha_s: s \in S(n)\}$. The standard symmetric bilinear form $B_n$ on $V_n$ defined by the Coxeter system $(W(n), S(n))$ is given on the basis $\{\alpha_s : s \in S(n)\}$ by
\[
	B_n(\alpha_s, \alpha_t) = -\cos\frac{\pi}{m(s,t)}.
\]
The action of $W(n)$ on $V_n$ preserving $B_n$ is defined on the generators $s \in S(n)$ by
\[
	s\cdot v = v - 2 B_n(\alpha_s, v) \alpha_s.
\]
Moreover, there is an isometry $F_n: (V_n, B_n) \to (\R^{n+1}, R_n)$, given on the basis elements of $V_n$ by $F_n(\alpha_{\RRef_{v}}) = R_n(v,v)^{-\frac 12}v$. One can check that $F_n(s \cdot v) = s\cdot F_n(v)$ for all $v \in V_n$ and $s \in S(n)$. From now on, we identify $\mathbb E^{n,1}$ with $(V_n, B_n)$ under the isometry $F_n$. 

The fact that $\OO^+(n,1)(\Z) \leq \Isom(\mathbb H^n)$ via the geometric representation of $(W(n), S(n))$ yields an easy classification of the finite subgroups of $\OO^+(n,1)(\Z) = \OO^+(1,n)(\Z)$ for $n = 2, 3$.
\begin{lem}\label{lem:max-finite-subgroups-Wn}
For $n = 2$ or $3$ and for any $\RRef_v \in S(n) - \{\RRef_{E_1-E_2}\}$, let 
\[
	G_v := \langle s \in S(n) - \{\RRef_v\} \rangle \leq W(n).
\]
The maximal finite subgroups of $W(n) \cong \OO^+(1,n)(\Z)$ are conjugate in $W(n)$ to some $G_v$.
\end{lem}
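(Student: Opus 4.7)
The plan has two steps. First I will show that any maximal finite subgroup $G \leq W(n)$ is conjugate in $W(n)$ to a standard parabolic subgroup $W_T = \langle T \rangle$ with $T \subsetneq S(n)$ and $W_T$ finite. Second, I will read off from the Coxeter diagrams of Figure \ref{fig:coxeter-diagrams} exactly which $T \subseteq S(n)$ give finite $W_T$, identify the maximal such $T$, and verify that they are precisely the complements $S(n) - \{\RRef_v\}$ for $\RRef_v \in S(n) - \{\RRef_{E_1-E_2}\}$.

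For the first step, I will use that $W(n) \leq \OO^+(n,1)(\R) = \Isom(\mathbb H^n)$ acts on the hyperboloid model with closed fundamental chamber $\overline{C}$ whose walls are the hyperplanes $H_s$ fixed by the generating reflections $s \in S(n)$. Since $\mathbb H^n$ is $\mathrm{CAT}(-1)$ and $G$ is finite, the circumcenter of any $G$-orbit is a $G$-fixed point $p \in \mathbb H^n$. After replacing $G$ by a $W(n)$-conjugate, I may assume $p \in \overline{C}$. By the classical structure theorem for Coxeter group actions on the Tits cone (see Humphreys, Section 5.13, or Davis, Chapter 4), the stabilizer of $p$ equals the standard parabolic $W_{T_p}$, where $T_p = \{s \in S(n) : p \in H_s\}$; and since $p \in \mathbb H^n$ rather than on its boundary, $B_n$ restricts to a positive definite form on $\mathrm{span}\{\alpha_s : s \in T_p\}$, making $W_{T_p}$ a finite Coxeter group. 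Thus $G \leq W_{T_p}$, and maximality of $G$ forces $G = W_{T_p}$.

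For the second step, I will compute the Coxeter matrix directly from the formula $B_n(\alpha_s, \alpha_t) = R_n(v_s, v_t)/\sqrt{R_n(v_s, v_s) \cdot R_n(v_t, v_t)} = -\cos(\pi/m(s,t))$, taking care with normalization (for $n=3$ the vectors $H - E_1 - E_2 - E_3$ and $E_i - E_{i+1}$ have $R_n$-norm $2$, while $E_3$ has $R_n$-norm $1$). The resulting diagrams are those of Figure \ref{fig:coxeter-diagrams}. A standard parabolic $W_T$ is finite if and only if the induced subdiagram is a disjoint union of finite-type (spherical) Coxeter diagrams. For $n = 2$, removing $\RRef_{E_1-E_2}$ leaves two disjoint nodes joined by an $\infty$ edge, giving an infinite dihedral group, whereas removing either other generator leaves a spherical diagram of type $A_1 \times A_1$ or $B_2$. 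For $n = 3$, removing $\RRef_{E_1-E_2}$ leaves a three-node path with edge labels $4, 4$, which is an affine Coxeter diagram (its bilinear form is positive semidefinite but not definite) and hence infinite, while removing any other single generator leaves a spherical diagram of type $A_1 \times A_2$, $A_1 \times B_2$, or $B_3$. These are exactly the $G_v$ in the statement.

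I expect the main obstacle to be the invocation of the structure theorem in the first step: identifying point stabilizers in $\overline{C}$ with standard parabolic subgroups is classical but nontrivial, relying on the fact that $\overline{C}$ is a strict fundamental domain and that the stabilizer of a point $p \in \overline{C}$ is generated by reflections in the walls through $p$. Once this is granted, the remainder of the argument is a direct combinatorial check of the diagrams in Figure \ref{fig:coxeter-diagrams}.
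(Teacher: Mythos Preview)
Your proof is correct and follows essentially the same approach as the paper: both use a fixed point of $G$ in $\mathbb H^n$, invoke the identification of point stabilizers with standard parabolic subgroups (Humphreys, Section 5.13), and then check which codimension-one parabolics $G_v$ are finite. The only differences are cosmetic --- the paper phrases the fixed-point argument via the Tits cone $U$ rather than the chamber $\overline C \subseteq \mathbb H^n$, and it verifies that $G_{E_1-E_2}$ is infinite by computing that its fixed subspace $\R\{H-E_1\}$ misses $\mathbb H^n$ rather than by reading off the infinite dihedral/affine $\tilde C_2$ type from the diagram.
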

\begin{proof}
Let $G \leq \OO^+(n,1)(\Z) \cong W(n)$ be a finite subgroup. Then $G$ acts on $\mathbb H^n$ as a finite subgroup of isometries so it must fix at least one point in $\mathbb H^n$ (\cite[Corollary 2.5.19]{thurston}). The fundamental domain of $\OO^+(n,1)(\Z)$ in $\mathbb H^n$ ($n = 2, 3$) given by Vinberg's algorithm (\cite{vinberg}) has closure equal to the intersection 
\[
	P := \bigcap_{\RRef_v \in S(n)} \{w \in \mathbb H^n : R_n(w,v) \leq 0\},
\]
after conjugating the generators $S(n)$ by the element of $\OO^+(n,1)(\Z)$ which negates which $E_i$ and fixes $H$. 

On the other hand, let $U \subseteq V_n$ be the Tits cone of $W(n)$ and consider $-U = \{-x \in V_n : x \in U\}$. Observe that $F_n^{-1}(P)$ is contained in $-U$, and hence $F_n^{-1}(\mathbb H^n)$ is also contained in $-U$. Because $G$ fixes a point in $-U$, it also fixes a point in $U$.

For any $I \subseteq S(n)$, define
\[
	C_I = \left(\bigcap_{s \in I} \{w \in V_n : B_n(w, \alpha_s) = 0\}\right) \cap \left( \bigcap_{s \in S(n) - I} \{w \in V_n: B_n(w,\alpha_s) > 0\}\right).
\]
The family $\mathcal C$ of the sets of the form $w(C_I)$ for all $w \in W(n)$ and $I \subseteq S(n)$ partitions $U$ (\cite[Section 5.13]{humphreys}). The stabilizer of any point in $C_I$ is $W_I$ by \cite[Theorem 5.13]{humphreys}, where
\[
	W_I := \langle s \in I \subseteq S(n) \rangle \leq W(n).
\] 
If $I = S(n)$ then the only point in $V_n$ that is fixed by $W_I$ is $0 \in V_n$, which is not contained in $F_n^{-1}(\mathbb H^n)$. If $I = S(n) - \{\RRef_{E_1-E_2}\}$ then the fixed subspace of $W_I$ in $V_n$ is $F_n^{-1}(\R\{H-E_1\})$, which has empty intersection with $F_n^{-1}(\mathbb H^n)$. Note that if $I$ is a proper subset of $S(n)$ and $I \neq S(n) - \{\RRef_{E_1-E_2}\}$ then $W_I$ is contained in $G_v$ for some $\RRef_v \in S(n) - \{\RRef_{E_1-E_2}\}$. Hence if $G$ fixes a point in $F_n^{-1}(\mathbb H^n)$ then $G$ is contained, up to conjugacy in $W(n)$, in $G_v$ for some $\RRef_v \in S(n) - \{\RRef_{E_1-E_2}\}$. 
\end{proof}

For completeness, we record the analogous result for $\OO(1,n)(\Z)$ for $n = 2, 3$.
\begin{lem}\label{lem:max-finite-subgroups}
For $n = 2, 3$, maximal finite subgroups of $\OO(1,n)(\Z)$ are conjugate in $\OO(1,n)(\Z)$ to subgroups of the form $\langle G_v, -I_{n+1} \rangle$ for some $\RRef_v \in S(n) - \{\RRef_{E_1-E_2}\}$.
\end{lem}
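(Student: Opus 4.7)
The plan is to reduce the lemma to the analogous statement for $\OO^+(1,n)(\Z)$ (Lemma \ref{lem:max-finite-subgroups-Wn}) by exploiting the element $-I_{n+1}$. The key observations are that $-I_{n+1}$ is central in $\OO(1,n)(\Z)$, has order $2$, and does not lie in $\OO^+(1,n)(\Z)$: indeed, $-I_{n+1}$ swaps the two sheets of $\{v \in \R^{n+1} : R_n(v,v) = -1\}$. Consequently $\OO(1,n)(\Z) = \OO^+(1,n)(\Z) \sqcup (-I_{n+1}) \cdot \OO^+(1,n)(\Z)$, and for any subgroup $H \leq \OO^+(1,n)(\Z)$, the internal product $\langle H, -I_{n+1}\rangle$ is the direct product $H \times \langle -I_{n+1}\rangle$ of cardinality $2\lvert H \rvert$.

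First I would show that any maximal finite subgroup $G \leq \OO(1,n)(\Z)$ must contain $-I_{n+1}$. Since $-I_{n+1}$ is central of order $2$, the group $\langle G, -I_{n+1} \rangle = G \cdot \langle -I_{n+1}\rangle$ has cardinality at most $2\lvert G \rvert$, so it is finite; by maximality it equals $G$, forcing $-I_{n+1} \in G$. Setting $G^+ := G \cap \OO^+(1,n)(\Z)$, it follows that $[G : G^+] = 2$ and $G = G^+ \times \langle -I_{n+1} \rangle$. Next I would show that $G^+$ is a maximal finite subgroup of $\OO^+(1,n)(\Z)$: if some finite subgroup $H \leq \OO^+(1,n)(\Z)$ properly contained $G^+$, then $H \times \langle -I_{n+1}\rangle$ would be a finite subgroup of $\OO(1,n)(\Z)$ properly containing $G$, contradicting maximality.

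Finally I would apply Lemma \ref{lem:max-finite-subgroups-Wn} to the maximal finite subgroup $G^+$ of $W(n) \cong \OO^+(1,n)(\Z)$, producing an element $w \in W(n) \subseteq \OO(1,n)(\Z)$ and an element $\RRef_v \in S(n) - \{\RRef_{E_1-E_2}\}$ with $w G^+ w^{-1} = G_v$. Because $-I_{n+1}$ is central, conjugation by $w$ fixes it, so $w G w^{-1} = \langle G_v, -I_{n+1}\rangle$, as desired. There is no real obstacle here; the argument is essentially a bookkeeping reduction built on the centrality of $-I_{n+1}$, and all of the hyperbolic-geometric and Coxeter-theoretic content has already been absorbed into Lemma \ref{lem:max-finite-subgroups-Wn}.
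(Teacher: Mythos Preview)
Your proposal is correct and follows essentially the same approach as the paper: use the centrality of $-I_{n+1}$ to force $-I_{n+1}\in G$, split $G$ as $(G\cap \OO^+(1,n)(\Z))\times\langle -I_{n+1}\rangle$, observe that the first factor is maximal finite in $\OO^+(1,n)(\Z)$, and invoke Lemma~\ref{lem:max-finite-subgroups-Wn}. The paper phrases the splitting via a split short exact sequence rather than a direct product, but the content is identical.
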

\begin{proof}
Let $G$ be a maximal finite subgroup of $\OO(1,n)(\Z)$. Observe that $\langle -I_{n+1}, G \rangle$ is finite because $-I_{n+1} \in Z(\OO(1,n)(\Z))$, so $-I_{n+1} \in G$. Then $G$ fits into a split short exact sequence
\[
	1 \to \OO^+(1,n)(\Z) \cap G \to G \to \langle -I_{n+1} \rangle \to 1,
\]
meaning that $G = \langle\OO^+(1,n)(\Z)\cap G, -I_{n+1} \rangle$ with $\OO^+(1,n)(\Z)\cap G$ a maximal finite subgroup of $\OO^+(1,n)(\Z)$. Finally, conclude by applying Lemma \ref{lem:max-finite-subgroups-Wn}.
\end{proof}

\section{A section of $\pi: \Homeo^+(M) \to \Mod(M)$ for $M = M_0$, $M_*$ and $M_1$}\label{sec:small-n}
Let $M = \CP^2$, $\CP^1 \times \CP^1$, or $\CP^2 \# \overline{\CP^2}$. The mapping class group $\Mod(M)$ is isomorphic to $\Z/2\Z$ for $M = \CP^2$ and to $(\Z/2\Z)^2$ in the latter two cases. It turns out that there exists a particularly nice section of the quotient map $\pi: \Homeo^+(M) \to \Mod(M)$. We construct this section in the following proposition as a warmup for the rest of this paper.
\begin{prop}\label{prop:m-star-and-m1}
Let $M = \CP^2$, $\CP^1 \times \CP^1$, or $\CP^2 \# \overline{\CP^2}$. There is a section of $\pi: \Homeo^+(M) \to \Mod(M)$ with image in $\Diff^+(M)$. In fact, $\Mod(M)$ is realized by a complex equivariant connected sum.
\end{prop}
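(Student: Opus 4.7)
The plan is to exhibit, in each case, an explicit finite subgroup $\tilde G \leq \Diff^+(M)$ that projects isomorphically onto $\Mod(M)$ under $\pi$ and is presented as a complex equivariant connected sum. The cases $M = \CP^2$ and $M = \CP^1\times\CP^1$ will be handled by Definition \ref{defn:cecs}(1), while $M_1 = \CP^2\#\overline{\CP^2}$ requires the genuine connected-sum clause Definition \ref{defn:cecs}(2).

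For $M = \CP^2$ I would take $\tilde G = \langle \tau\rangle$ with $\tau([x:y:z]) = [\bar x:\bar y:\bar z]$; this is orientation-preserving because $\dim_{\C}\CP^2 = 2$, and on the class $H \in H_2(\CP^2;\Z)$ of a real line, $\tau$ reverses orientation as a $\CP^1$, so $\tau_* = -1$, realizing the nontrivial element of $\Mod(\CP^2) \cong \Z/2\Z$. For $M = \CP^1\times\CP^1$ I would take $\tilde G = \langle \sigma, \tau\times\tau\rangle$, where $\sigma$ is the biholomorphic factor-swap and $\tau\times\tau$ is simultaneous complex conjugation; these are commuting involutions acting on the isotropic basis $\{S_1, S_2\}$ as the swap and $-I$ respectively, together generating $\Mod(M) \cong (\Z/2\Z)^2$.

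The main case is $M_1$, where $\Mod(M_1) = \OO(1,1)(\Z) = \langle -I,\operatorname{diag}(1,-1)\rangle \cong (\Z/2\Z)^2$ in the basis $\{H, E_1\}$. Since every biholomorphism (resp.\ antibiholomorphism) of $\CP^2$ acts as $+I$ (resp.\ $-I$) on $H_2$, Definition \ref{defn:cecs}(1) alone yields at most $\Z/2\Z$; I would instead form the equivariant connected sum $(M_1, G) = (\CP^2, G)\#(\overline{\CP^2}, G)$ with $G = (\Z/2\Z)^2 = \langle g_1, g_2\rangle$ acting asymmetrically by $g_1|_{\CP^2} = \tau$, $g_1|_{\overline{\CP^2}} = \phi$, $g_2|_{\CP^2} = \phi$, $g_2|_{\overline{\CP^2}} = \tau$, where $\phi([x:y:z]) = [x:y:-z]$. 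A quick computation then gives $g_1 \mapsto \operatorname{diag}(-1,1)$ and $g_2 \mapsto \operatorname{diag}(1,-1)$ on $H_2(M_1)$, which generate $\Mod(M_1)$.

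The hard part is verifying that this equivariant connected sum actually exists, i.e.\ that the two tangential $G$-representations agree up to an orientation-reversing linear isomorphism. I would use the common fixed point $p = [1:0:0]$ (lying on the fixed line $\{z=0\}$ of $\phi$ and in the real locus of $\tau$); in the affine chart $(Y,Z) = (y/x, z/x)$ one computes $d\tau = \operatorname{diag}(1,-1,1,-1)$ and $d\phi = \operatorname{diag}(1,1,-1,-1)$ in the real basis $(Y_1, Y_2, Z_1, Z_2)$. Because the roles of $g_1$ and $g_2$ are interchanged between the two summands, the required intertwiner must conjugate each of these diagonals to the other; the orientation-reversing coordinate transposition $A:(Y_1,Y_2,Z_1,Z_2)\mapsto (Y_1,Z_1,Y_2,Z_2)$ (with $\det A = -1$) visibly does so by a one-line matrix check, so the c.e.c.s.\ construction goes through and produces the desired section.
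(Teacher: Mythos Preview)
Your approach is essentially identical to the paper's: complex conjugation on $\CP^2$; the factor swap together with complex conjugation on $\CP^1\times\CP^1$; and for $M_1$, an asymmetric $(\Z/2\Z)^2$-action on the two $\CP^2$-summands built from conjugation and a coordinate sign-flip (the paper uses $[-X:Y:Z]$ where you use $[x:y:-z]$, which are conjugate in $\mathrm{PGL}_3$). The only presentational difference is that the paper packages the $M_1$ gluing check into Lemma~\ref{lem:glue}, while you compute the tangential representations by hand.

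There is one sign slip in your $M_1$ argument. You are forming $N_1\#N_2$ with $N_1=\CP^2$ and $N_2=\overline{\CP^2}$, so by the paper's convention (Section~\ref{sec:cecs}) you need an orientation-\emph{reversing} isomorphism $T_p\CP^2\to T_q\overline{\CP^2}$. But your affine chart $(Y_1,Y_2,Z_1,Z_2)$ is positively oriented for $\CP^2$ and hence \emph{negatively} oriented for $\overline{\CP^2}$; in these coordinates an orientation-reversing map $T_p\CP^2\to T_q\overline{\CP^2}$ therefore has determinant $+1$, not $-1$. With $\det A=-1$ your gluing would produce $\CP^2\#\CP^2$ rather than $M_1$. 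The repair is immediate: replace $A$ by $A'=A\cdot\operatorname{diag}(1,1,1,-1)$, i.e.\ $(Y_1,Y_2,Z_1,Z_2)\mapsto(Y_1,Z_1,Y_2,-Z_2)$. Since $d\tau$ and $d\phi$ are diagonal, $A'$ still conjugates $d\tau\leftrightarrow d\phi$, and now $\det A'=+1$ as required.
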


A main tool to construct complex equivariant connected sums is the following lemma. 
\begin{lem}\label{lem:glue}
Let $G_M \cong (\Z/2\Z)^2 \leq \Diff^+(M)$ and let $G_N \cong (\Z/2\Z)^2 \leq \Diff^+(N)$ and fix a group isomorphism $\Phi: G_M \to G_N$. Suppose there exist $p \in \Fix(G_M) \subseteq M$, $q \in \Fix(G_N) \subseteq N$. For all $h \in G_M$, let $F_h$ and $F_{\Phi(h)}$ denote the connected components of $p, q$ in $\Fix(h)$ and $\Fix(\Phi(h))$ respectively and suppose that $F_{h}, F_{\Phi(h)}$ are $2$-dimensional. Moreover for some pair of generators $f, g$ of $G_M$, suppose that $F_f \cap F_g$ and $F_{\Phi(f)} \cap F_{\Phi(g)}$ are $1$-dimensional.

There exist diffeomorphisms $h \# \Phi(h) \in \Diff^+(M \#\overline N)$ for all $h \in G_M$ such that $\langle h \# \Phi(h) : h \in G_M \rangle \cong G_M$ and 
\[
	[h \# \Phi(h)] = ([h], [\Phi(h)]) \in \Mod(M) \times \Mod(\overline N) \leq \Mod(M \#\overline N).
\]
\end{lem}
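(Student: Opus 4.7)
The strategy is to construct $f \# \Phi(f)$ as the action on a $G_M$-equivariant connected sum $M \# \overline{N}$ at the fixed points $p$ and $q$. By Bochner's linearization theorem, the smooth $G_M$-action near $p$ is $G_M$-equivariantly conjugate (via the exponential map of an invariant Riemannian metric) to the linear tangential representation on a small ball in $T_pM$, and similarly for $G_N$ near $q$. The recipe from Section \ref{sec:cecs} then reduces the construction to producing an orientation-reversing $G_M$-equivariant linear isomorphism $\phi: T_pM \to T_q\overline{N}$ (where $G_N$ acts on $T_q\overline{N}$ via $\Phi$); removing small invariant open balls and gluing their boundary $3$-spheres by the diffeomorphism induced by $\phi$ then yields $M \# \overline{N}$ with a smooth $G_M$-action. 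Define $f \# \Phi(f) \in \Diff^+(M \# \overline{N})$ as the action of $f$ under this $G_M$-action; the resulting subgroup is isomorphic to $G_M$ since the action is faithful on each summand.

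To produce $\phi$, I use that every real representation of $G_M \cong (\Z/2\Z)^2$ splits as a direct sum of one-dimensional representations, and there are exactly four distinct one-dimensional real characters $\chi_1, \ldots, \chi_4$. The hypothesis that $F_f$ and $F_g$ are $2$-dimensional at $p$ with $1$-dimensional intersection translates, on $T_pM$, to the statement that the $(+1)$-eigenspaces of $f$ and $g$ both have dimension $2$ and meet in a line. A short character computation forces the tangential representation on $T_pM$ to be $\chi_1 \oplus \chi_2 \oplus \chi_3 \oplus \chi_4$ with one-dimensional isotypic summands $L_1, \ldots, L_4$; the same decomposition holds on $T_qN$ (and hence on $T_q\overline{N}$) via $\Phi$. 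Any equivariant linear isomorphism must match isotypic components, so it is specified by picking a linear isomorphism on each $L_i$, and flipping a single sign reverses overall orientation. Hence $\phi$ with either orientation exists, in particular one that is orientation-reversing.

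It remains to verify $[f \# \Phi(f)] = ([f], [\Phi(f)])$ under the inclusion $\Mod(M) \times \Mod(\overline{N}) \leq \Mod(M \# \overline{N})$, which by Theorem \ref{thm:freedman--quinn} reduces to a computation on $H_2$. Away from the $S^3$-neck, $f \# \Phi(f)$ restricts to $f$ on $M$ and to $\Phi(f)$ on $\overline{N}$; under the Mayer--Vietoris splitting $H_2(M \# \overline{N}; \Z) = H_2(M; \Z) \oplus H_2(\overline{N}; \Z)$, the induced map is therefore $f_* \oplus \Phi(f)_*$, which is exactly the image of $([f], [\Phi(f)])$. The step requiring the most care is the second paragraph: the fixed-set dimensions at $p$ and $q$ must rigidly pin down the tangential representations up to equivariant isomorphism, while still leaving the orientation of that isomorphism free to choose. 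The $(\Z/2\Z)^2$-hypothesis, together with the specified fixed-set data, delivers both features simultaneously.
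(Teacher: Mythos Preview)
Your proposal is correct and follows essentially the same approach as the paper: linearize the actions at $p$ and $q$ via an invariant metric, pin down the tangential $(\Z/2\Z)^2$-representation from the fixed-set dimensions, and glue equivariantly. The only cosmetic difference is that the paper packages the representation-theoretic step as ``there is a unique faithful representation $(\Z/2\Z)^2 \to \SO(4,\R)$ with $-I_4 \notin \mathrm{image}$,'' whereas you compute the multiplicities $(m_1,m_2,m_3,m_4)=(1,1,1,1)$ directly from $\dim F_f = \dim F_g = 2$ and $\dim(F_f\cap F_g)=1$; both arguments yield the same isotypic decomposition and hence the desired orientation-reversing equivariant identification.
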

\begin{proof}
Fix a $G_M$-invariant metric on $M$ and $G_N$-invariant metric on $N$. The tangential representations $\rho_M: G_M \to \SO(T_pM)$ and $\rho_N: G_N \to \SO(T_qN)$ are faithful because isometries of compact manifolds are determined by their action on a point and a frame. The invariant subspaces $T_pM^{G_M}$ and $T_qN^{G_N}$ are $1$-dimensional because $F_f \cap F_g$ and $F_{\Phi(f)} \cap F_{\Phi(g)}$ are $1$-dimensional. Therefore $-I_4$ is not in the image of the tangential representations of either $G_M$ and $G_N$. 

There is a unique faithful representation $\rho: (\Z/2\Z)^2 \to \SO(4, \R)$ up to conjugation in $\SO(4, \R)$ such that $-I_4 \notin \image(\rho)$. Therefore, $\rho_M$ and $\rho_N$ are equivalent by an orientation-preserving isomorphism $T_pM \to T_qN$, i.e. an orientation-reversing isomorphism $T_pM \to T_q\overline N$. Now construct the connected sum at small $G_M$- and $G_N$-invariant disks in $M$ and $N$ centered at $p$ and $q$ respectively. Then $M \# \overline N$ becomes a smooth $(\Z/2\Z)^2$-manifold in the standard way since the map 
\[
	(f \# \Phi(f))(x)= \begin{cases}
	f(x) & x \in M - p \\
	\Phi(f)(x) & x \in N-q
	\end{cases}
\]
is a well-defined diffeomorphism of $M \# \overline N$. By construction, 
\[
	[f \# \Phi(f)] = ([f], [\Phi(f)]) \in \Mod(M) \times \Mod(\overline N) \leq \Mod(M \# \overline N). \qedhere
\]
\end{proof}

\begin{proof}[Proof of Proposition \ref{prop:m-star-and-m1}]
Consider the cases $M = M_0$, $M = M_*$ and $M = M_1$ separately.
\begin{enumerate}
	\item Let $M = M_0 = \CP^2$. Then $\Aut(H_2(M; \Z), Q_M) \cong \Z/2\Z$ with generator $c: H \mapsto -H$. The map $\tau: M \to M$ given by complex conjugation realizes $c$ so $s: c \mapsto \tau$ defines a desired section $s: \Mod(M) \to \Diff^+(M)$.
	\item Let $M = M_* = \CP^1 \times \CP^1$. Let $(S_1, S_2)$ be a $\Z$-basis of $H_2(M; \Z)$ where $S_1, S_2$ correspond to the first and second factors $\CP^1$ respectively so that $Q_{M}(S_i, S_j) = 1-\delta_{ij}$ for $1 \leq i, j \leq 2$. Then $\Aut(H_2(M; \Z), Q_{M}) = \langle c_1, c_2 \rangle \cong (\Z/2\Z)^2$ where 
	\[
		c_1 = \begin{pmatrix}-1 & 0 \\ 0 & -1\end{pmatrix},\quad\quad c_2 = \begin{pmatrix}0 & 1 \\ 1 & 0\end{pmatrix}
	\]
	with respect to the $\Z$-basis $(S_1, S_2)$. For $c = c_1$ and $c_2$, define $f_c: M \to M$ by
	\[
	f_{c_1}: ([X:Y],[W:Z]) \mapsto ([\overline X: \overline Y],[\overline W: \overline Z]), \qquad f_{c_2}: ([X:Y],[W:Z]) \mapsto ([W:Z], [X:Y]).
	\]
	Because $f_{c_1}$ and $f_{c_2}$ have order two and commute, $\langle f_{c_1}, f_{c_2} \rangle \cong (\Z/2\Z)^2 \leq \Diff^+(M)$ with $[f_{c_i}] = c_i$ for $i = 1,2$. Therefore, $s: c_i \mapsto f_{c_i}$ defines a desired section $s: \Mod(M) \to \Diff^+(M)$.

	\item Let $M = M_1 = \CP^2 \# \overline{\CP^2}$. Then $\Aut(H_2(M; \Z); \Q_M) = \langle c_1, c_2 \rangle \cong (\Z/2\Z)^2$ where
	\[
		c_1 = \begin{pmatrix}
		- 1 & 0 \\ 0 & 1
		\end{pmatrix}, \quad\quad c_2 = \begin{pmatrix}
		1 & 0 \\ 0 & - 1
		\end{pmatrix}
	\]
	with respect to the $\Z$-basis $(H, E_1)$. Define the diffeomorphisms $f_1, f_2: \CP^2 \to \CP^2$ and $g_1, g_2: \CP^2 \to \CP^2$ by
	\[
	f_1, g_2: [X:Y:Z] \mapsto [\overline X: \overline Y: \overline Z], \quad\quad f_2, g_1: [X:Y:Z] \mapsto [-X:Y:Z].
	\]
	Because $f_1$ and $f_2$ have order two and commute, $\langle f_1, f_2 \rangle \cong (\Z/2\Z)^2 \leq \Diff^+(\CP^2)$. By the same computation, $\langle g_1, g_2 \rangle \cong (\Z/2\Z)^2 \leq \Diff^+(\CP^2)$. Apply Lemma \ref{lem:glue} with $\langle f_1, f_2 \rangle \leq \Diff^+(\CP^2)$ and $\langle g_1, g_2 \rangle \leq \Diff^+(\CP^2)$. The resulting diffeomorphisms $f_i \# g_i$ of $M_1$ generate $(\Z/2\Z)^2$ and satisfy the equalities $[f_i\#g_i] = c_i$. Therefore, $s: c_i \mapsto f_i \# g_i$ defines a desired section $s: \Mod(M) \to \Diff^+(M)$. \qedhere
\end{enumerate}
\end{proof}

\section{Nielsen realization problem for del Pezzo manifolds} \label{sec:m2-m3}

\subsection{Finite group actions on $4$-manifolds}\label{sec:finite-gp}
If $G = \Z/p\Z$ with $p \in \Z$ prime acts smoothly on a closed, oriented $4$-manifold in an orientation-preserving way then its fixed set is a finite disjoint union of isolated points and surfaces (see e.g. \cite[Proof of Lemma 3.5 (3)]{farb--looijenga}). In this section we outline some results giving homological restrictions on these fixed sets. 

Any topological action of $G = \Z/p\Z$ on a closed, oriented, simply-connected $4$-manifold $M$, induces an action of $G$ on $H^2(M; \Z)$. By \cite[Proposition 1.1]{edmonds}, $H^2(M; \Z)$ decomposes as a direct sum of indecomposable representations of the trivial type, cyclotomic type, and regular type. Note that indecomposable representations of cyclotomic type are not necessarily isomorphic as $G$-representations to $\Z[\zeta_p]$, where $\zeta_p$ is a primitive $p$th root of unity, if $p \geq 23$. Similarly, not all indecomposable representations of regular type are isomorphic as $G$-representations to the group ring $\Z[G]$ if $p \geq 23$. See \cite[Proposition 1.1]{edmonds} for more details. Such exotic representations do not occur in the arguments of this paper because $p = 2$ in all applications of the following proposition. 
\begin{prop}[{Edmonds, \cite[Proposition 2.4]{edmonds}}]\label{prop:edmonds}
Let $G = \Z/p\Z$ act on a closed, oriented, simply-connected $4$-manifold $M$. Let $t$ be the number of trivial summands and $c$ be the number of cyclotomic summands in $H^2(M; \Z)$. Let $\Fix(G) \subseteq M$ be the fixed set of $G$. If $\Fix(G) \neq \emptyset$ then
\begin{enumerate}
	\item $\beta_1(\Fix(G)) = c$ and
	\item $\beta_0(\Fix(G)) + \beta_2(\Fix(G)) = t+2$,
\end{enumerate}
where $\beta_k(\Fix(G))$ denotes the mod $p$, $k$th Betti numbers of $\Fix(G)$.
\end{prop}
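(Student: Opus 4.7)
My plan is to derive two independent linear relations among $\beta_0(F), \beta_1(F), \beta_2(F)$---the first via the Lefschetz fixed point formula, the second via the Borel localization theorem---and then solve the resulting $2 \times 2$ system. Local linearization of the $\Z/p$-action forces each component of $F := \Fix(G)$ to have even codimension in $M$, so $F$ is a disjoint union of isolated points and closed surfaces with $\beta_i(F) = 0$ for $i \geq 3$. The decomposition of $H_2(M;\Z)$ into the three indecomposable $\Z[G]$-types is the Diederichsen--Reiner classification of $\Z[\Z/p]$-lattices, which I take as given. Throughout I assume $M$ is simply connected, which is the only setting in which the proposition is used in this paper.

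For the Lefschetz step, apply the Lefschetz fixed point theorem to a generator $g \in G$. Over $\Q$, $g$ acts with trace $+1$ on each trivial summand $\Z$, with trace $\sum_{k=1}^{p-1}\zeta_p^k = -1$ on each cyclotomic summand $\Z[\zeta_p]$, and with trace $0$ on each regular summand $\Z[G]$ (the character of the regular representation vanishes on non-identity elements). Combined with trivial action on $H_0(M) \oplus H_4(M) \cong \Z^2$ and the vanishing of $H_1(M)$ and $H_3(M)$, this yields
\[
	\chi(F) = L(g) = 2 + t - c.
\]

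For the Borel step, reducing modulo $p$ identifies $H^*(M; \mathbb F_p) \cong V_1^{t+2} \oplus V_{p-1}^{c} \oplus V_p^{r}$ as $\mathbb F_p[G]$-modules, where $V_k := \mathbb F_p[t]/(t-1)^k$ is the $k$-dimensional indecomposable and the two extra trivial summands come from $H^0$ and $H^4$. The module $V_p = \mathbb F_p[G]$ is projective, has vanishing positive group cohomology, and disappears after inverting the degree-$2$ generator $u \in H^*(BG; \mathbb F_p)$, while each $V_k$ with $k < p$ contributes exactly one free copy of $H^*(BG; \mathbb F_p)[u^{-1}]$. Matching $\mathbb F_p[u, u^{-1}]$-ranks on both sides of the localization isomorphism
\[
	H^*_G(M; \mathbb F_p)[u^{-1}] \cong H^*(F; \mathbb F_p) \otimes_{\mathbb F_p} H^*(BG; \mathbb F_p)[u^{-1}]
\]
yields $\sum_i \beta_i(F; \mathbb F_p) = (t+2) + c$.

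Adding and subtracting the two relations $\chi(F) = 2 + t - c$ and $\sum_i \beta_i(F;\mathbb F_p) = 2 + t + c$ then delivers the claimed identities $\beta_0(F) + \beta_2(F) = t+2$ and $\beta_1(F) = c$. The main obstacle is the bookkeeping in the Borel step: correctly verifying that the free $\mathbb F_p[G]$-summands---corresponding precisely to the regular summands $\Z[G]^r$ of $H_2(M;\Z)$---drop out under $u$-localization, while every other indecomposable $V_k$ with $k < p$ contributes uniformly. This is the structural content of Borel's theorem, and it explains why the regular-summand count $r$ is invisible in the fixed-point data even though it contributes $rp$ to $b_2(M) = t + c(p-1) + rp$.
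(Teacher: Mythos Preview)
The paper does not supply its own proof of this proposition: it is quoted verbatim from Edmonds and used as a black box throughout Section~\ref{sec:m2-m3}. So there is no in-paper argument to compare against. Your two-step strategy---Lefschetz trace for $\chi(F)=2+t-c$, then a Borel/Smith computation for $\sum_i\beta_i(F;\mathbb F_p)=2+t+c$---is exactly the route Edmonds takes in the cited reference, and your trace and mod-$p$ module identifications are correct.

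There is one technical point you pass over. In the Borel step you compute the $H^*(BG;\mathbb F_p)[u^{-1}]$-rank of $H^*_G(M;\mathbb F_p)[u^{-1}]$ by reading off the localized $E_2$-page of the Serre spectral sequence for $M\hookrightarrow M_G\to BG$. That is only legitimate once you know the localized spectral sequence degenerates; the localization theorem alone identifies the abutment with $H^*(F)\otimes H^*(BG)[u^{-1}]$, not the $E_2$-page. In the present situation degeneration does hold, but it uses both hypotheses you have available: the assumption $F\neq\emptyset$ splits $H^*(BG)\to H^*_G(M)$ and forces every differential landing in the bottom row $E_r^{*,0}$ to vanish, while the fact that $G$ acts by orientation-preserving diffeomorphisms produces an equivariant fundamental class showing the top row $E_r^{*,4}$ consists of permanent cycles; since $H^1=H^3=0$ these are the only possible differentials. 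This short argument is worth inserting, since without it the equality $\sum_i\beta_i(F)=(t+2)+c$ is only the inequality $\leq$. Calling this ``the structural content of Borel's theorem'' is not quite accurate---Borel's theorem is the localization isomorphism itself, and the degeneration is a separate (easy, but separate) input.
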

\begin{rmk}\label{rmk:euler-char}
By \cite[Corollary 1.4]{edmonds}, the Euler characteristic $\chi(\Fix(G))$ of $\Fix(G)$ is $t - c + 2$. Hence, if $t + 2 \neq c$ then $\Fix(G) \neq \emptyset$ if $t + 2 \neq c$ and so Proposition \ref{prop:edmonds} applies. Moreover, Proposition \ref{prop:edmonds} applies to the $\Z[G]$-module structure of $H_2(M; \Z)$ since $H_2(M; \Z)$ and $H^2(M; \Z)$ are isomorphic as $\Z[G]$-modules via Poincar\'e duality and the self-duality of the trivial, cyclotomic, and regular $G$-representations. 
\end{rmk}

The second standard result regarding fixed sets of orientation-preserving, smooth finite group actions on $4$-manifolds is the \emph{Hirzebruch $G$-signature theorem}. We specialize to the case $G = \Z/2\Z$. Two quantities are necessary in the statement of this theorem: $\sigma(M)$ and $\sigma(M/G)$. The quantity $\sigma(M)$ denotes the \emph{signature} of $M$, which is defined as $\sigma(M) = p^+ - p^-$ where $(p^+, p^-)$ is the signature of the intersection form $Q_M$ as a nondegenerate symmetric bilinear form. By \cite[Section 2.1, (22), (24)]{hirzebruch--zagier}, $\sigma(M/G)= p_G^+ - p_G^-$ where $(p_G^+, p_G^-)$ is the signature of the intersection form $Q_M$ restricted to the $G$-fixed subspace of $H_2(M; \R)$. 
\begin{thm}[{A special case of the Hirzebruch $G$-signature theorem, \cite[Section 9.2, (12)]{hirzebruch--zagier}}]\label{thm:g-signature}
Let $G = \Z/2\Z$ act on $M^4$ by orientation-preserving diffeomorphisms such that the $2$-dimensional connected components of the fixed sets are orientable. Then
\[
	2\sigma(M/G) = \sigma(M) + \sum_C Q_M([C],[C])
\]
where the sum on the right side is taken over the $2$-dimensional connected components $C$ of the fixed set of $G$ in $M$.
\end{thm}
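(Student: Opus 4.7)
The strategy is to invoke the Atiyah--Bott--Singer equivariant index theorem (the $G$-index theorem) applied to the signature operator and then to identify the local contributions at components of the fixed set. The first observation is purely representation-theoretic: since $G = \Z/2\Z$ acts on $H^2(M; \R)$ preserving the intersection form, the form is block-diagonal with respect to the decomposition $H^2(M;\R) = H^2(M;\R)^G \oplus H^2(M;\R)^{-G}$ into $\pm 1$ eigenspaces of $g^*$ (where $g \in G$ is the nontrivial element). One checks that $\sigma(M/G)$, as defined in the excerpt, equals $\frac{1}{2}(\sigma(M) + \sigma(g,M))$, where the \emph{$G$-signature} $\sigma(g, M)$ is the trace of $g^*$ on the maximal positive-definite $G$-invariant subspace of $H^2(M;\R)$ minus the trace on the maximal negative-definite one. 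Thus it suffices to prove $\sigma(g, M) = \sum_C Q_M([C],[C])$, with the sum taken over the $2$-dimensional connected components of $\Fix(g)$.

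The plan is to apply the Atiyah--Bott Lefschetz fixed-point theorem to the signature operator on $M$, which expresses $\sigma(g, M)$ as a sum of local contributions, one for each connected component $F$ of $\Fix(g)$. There are two types of components to analyze. At an isolated fixed point $p$, the tangential representation of $g$ on $T_p M \cong \R^4$ is $-I_4$ (since any involution on $\R^4$ fixing only the origin and preserving orientation is $-I_4$). Viewing $T_p M$ as $\C^2$ with $g$ acting by $e^{i\pi}$ on each factor, the standard local contribution formula evaluates to $\prod_{j=1}^2 (-\cot(\pi/2))$, which vanishes. At a $2$-dimensional oriented component $C$, $g$ acts as $-1$ on the fibers of the normal bundle $\nu_C \to C$, and the local contribution reduces to the Euler number $e(\nu_C)$, which equals the self-intersection $Q_M([C],[C])$ by the standard identification of the normal Euler class of an embedded oriented surface. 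Summing yields $\sigma(g,M) = \sum_C Q_M([C],[C])$, and combining with the representation-theoretic identity gives the desired formula.

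The principal obstacle is the honest derivation of the Atiyah--Bott local contribution formulas, especially at the $2$-dimensional fixed components where one must carefully identify the local equivariant signature with a Chern/Euler class computation on the normal bundle; the orientability hypothesis on the $2$-dimensional components is essential here, as it is what allows $[C] \in H_2(M;\Z)$ and its self-intersection to be well-defined. Since this would amount to redoing a significant portion of Hirzebruch--Zagier \cite{hirzebruch--zagier}, the cleanest route is simply to quote the $G$-signature theorem in the form stated there and carry out Step 1, the representation-theoretic reduction, by hand.
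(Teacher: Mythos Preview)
Your proposal is correct and follows essentially the same approach as the paper. The paper does not prove this theorem either: it cites the general $G$-signature theorem from Hirzebruch--Zagier and, in the remark following the statement, computes the defects exactly as you do (zero at isolated fixed points since $d_pg = -I_4$, and $Q_M([C],[C])$ at orientable surface components), referring to the relevant formulas in \cite{hirzebruch--zagier}.
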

\begin{rmk}
The Hirzebruch $G$-signature theorem in \cite[Section 9.2 (12)]{hirzebruch--zagier} is stated for a general finite group $G$. Its statement involves certain quantities called \emph{defects} which are attached to every connected submanifold of $M$ with nontrivial stabilizer in $G$. In the case that $G = \Z/2\Z$, any orientable $2$-dimensional component $C$ of the fixed set has defect equal to $Q_M([C], [C])$ according to \cite[Section 9.2, (15)]{hirzebruch--zagier}. At any isolated fixed point $p \in M$, the nontrivial element $g \in G$ acts by $-I_4$ on the tangent space $T_pM$. This observation and \cite[Section 9.2, (19)]{hirzebruch--zagier} imply that the defect at any isolated fixed point is $0$. These computations of defects reduce the general statement of the theorem to the statement given above for $G= \Z/2\Z$. 
\end{rmk}

\subsection{Smooth Nielsen realization problem for $M_2$}\label{sec:m2}
The goal of this subsection is to prove Theorem \ref{thm:m2}. We first label certain mapping classes in $\Mod(M_2)$. Denote by $\Phi, \Psi$ by following elements of $\Aut(H_2(M_2; \Z), Q_{M_2})\cong\OO(1,2)(\Z)$ whose matrix forms are given with respect to the ordered $\Z$-basis $(H, E_1, E_2)$ of $H_2(M_2; \Z)$:
\[
	\Phi = \RRef_{E_1-E_2}\RRef_{E_1} = \begin{pmatrix}
	1 & 0 & 0 \\
	0 & 0 & 1 \\
	0 & -1 & 0
	\end{pmatrix}, \quad\quad \Psi = \RRef_{E_1} =  \begin{pmatrix}
	1 & 0 & 0 \\
	0 & -1 & 0 \\
	0 & 0 & 1
	\end{pmatrix}.
\]
Denote by $A, B$ the following elements of $\Aut(H_2(M_2; \Z), Q_{M_2})\cong\OO(1,2)(\Z)$ whose matrix forms are given respect to the $\Z$-basis $(S_1, S_2, \Sigma)$:
\[
	A = \RRef_{E_1-E_2} = \begin{pmatrix}
	0 & 1 & 0 \\
	1 & 0 & 0 \\
	0 & 0 & 1
	\end{pmatrix}, \quad\quad B = \RRef_{H-E_1-E_2} = \begin{pmatrix}
	1 & 0 & 0 \\
	0 & 1 & 0 \\
	0 & 0 & -1
	\end{pmatrix}.
\]

We determine the realizable subgroups of $G_1 = \langle A, B, -I_3\rangle$ and $G_2 = \langle \Phi, \Psi, -I_3\rangle$ in Sections \ref{sec:g1} and \ref{sec:g2} respectively. In Section \ref{sec:m2}, we combine the results of Sections \ref{sec:g1} and \ref{sec:g2} to prove Theorem \ref{thm:m2} and Corollaries \ref{cor:finite-order-m2} and \ref{cor:dehn-twist-m2}.

\subsubsection{Subgroups of $G_1 = \langle A, B, -I_3 \rangle \cong (\Z/2\Z)^3$}\label{sec:g1}

In this section we first analyze the fixed sets of any lift of some $c \in G_1$ to determine some subgroups of $G_1$ which cannot be realized by diffeomorphisms. Afterwards, we explicitly realize the remaining subgroups of $G_1$. 
\begin{lem}\label{lem:fixed-A-AB}
Let $g = A$ or $-AB$. If $g$ is realized by an order $2$ diffeomorphism $f_g$ then
\[
	\Fix(f_g) \cong \mathbb S^2 \sqcup \{p\}
\]
where $p$ is an isolated fixed points of $f_g$.
\end{lem}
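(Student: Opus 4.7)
The plan is to combine Edmonds' fixed-set proposition (Proposition~\ref{prop:edmonds}) with the Hirzebruch $G$-signature theorem (Theorem~\ref{thm:g-signature}) applied to $G = \langle f_g \rangle \cong \Z/2\Z$ acting smoothly on $M_2$. First I would determine the $\Z[G]$-module structure of $H_2(M_2;\Z)$. Working in the basis $(S_1, S_2, \Sigma)$, the vector $\Sigma$ is fixed in both cases, so $\Z\cdot\Sigma$ splits off as a trivial $\Z[G]$-summand with $\Z\{S_1, S_2\}$ as a $G$-invariant rank-$2$ complement. In each of $g = A$ and $g = -AB$, the fixed and anti-fixed eigenvectors inside $\Z\{S_1, S_2\}$ are $S_1 + S_2$ and $S_1 - S_2$ (in some order), and their $\Z$-span is a sublattice of index $2$ in $\Z\{S_1, S_2\}$. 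This rules out $\Z\{S_1, S_2\} \cong \Z \oplus \Z^-$, so by the classification of indecomposable $\Z[\Z/2\Z]$-lattices, $\Z\{S_1, S_2\}$ is isomorphic to the regular representation $\Z[G]$. Hence $H_2(M_2;\Z) \cong \Z \oplus \Z[G]$ as $\Z[G]$-modules, with parameters $t = 1$, $c = 0$, $r = 1$ in the notation of Proposition~\ref{prop:edmonds}.

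Proposition~\ref{prop:edmonds} then yields $\beta_1(\Fix(f_g)) = 0$ and $\beta_0(\Fix(f_g)) + \beta_2(\Fix(f_g)) = 3$ with $\mathbb{F}_2$-coefficients. The fixed set of an orientation-preserving smooth $\Z/2\Z$-action on a closed oriented $4$-manifold is a disjoint union of isolated points and closed surfaces, and among closed connected surfaces only $\mathbb S^2$ has vanishing mod-$2$ first Betti number. Consequently every $2$-dimensional component of $\Fix(f_g)$ must be a $2$-sphere. Writing $\Fix(f_g) = (a \text{ points}) \sqcup (k \text{ spheres})$, the Edmonds constraint becomes $a + 2k = 3$, giving $(a, k) \in \{(3, 0),\,(1, 1)\}$; as a sanity check, the Lefschetz fixed-point formula gives $\chi(\Fix(f_g)) = 2 + \mathrm{tr}(g\mid_{H_2(M_2;\R)}) = 3$, matching.

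Finally, to rule out $(a, k) = (3, 0)$ I would invoke Theorem~\ref{thm:g-signature}. For $g = A$ the $G$-fixed subspace of $H_2(M_2;\R)$ is spanned by $\Sigma$ and $S_1 + S_2$, on which $Q_{M_2}$ has signature $(1, 1)$, so $\sigma(M_2/G) = 0$; for $g = -AB$ it is spanned by $\Sigma$ and $S_1 - S_2$, on which $Q_{M_2}$ has signature $(0, 2)$, so $\sigma(M_2/G) = -2$. Combined with $\sigma(M_2) = -1$, Theorem~\ref{thm:g-signature} forces $\sum_C Q_{M_2}([C], [C])$ to equal $1$ or $-3$ respectively, both nonzero. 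Hence at least one $2$-dimensional fixed component must exist, ruling out $(3, 0)$ and leaving $\Fix(f_g) \cong \mathbb S^2 \sqcup \{p\}$. The main delicate step is the identification $\Z\{S_1, S_2\} \cong \Z[G]$ in the first paragraph: it is essential that this module is \emph{not} $\Z \oplus \Z^-$, since only then does Proposition~\ref{prop:edmonds} give $c = 0$ and thereby force every $2$-dimensional fixed component to be a sphere rather than a higher-genus surface.
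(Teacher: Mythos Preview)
Your proposal is correct and follows essentially the same route as the paper: Edmonds' proposition to constrain the Betti numbers of $\Fix(f_g)$, then the $G$-signature theorem to force a $2$-dimensional component. The only cosmetic differences are that you spell out the $\Z[G]$-module decomposition of $H_2(M_2;\Z)$ in detail (the paper just asserts the resulting values of $t$ and $c$), and you compute $\sigma(M_2/G)$ explicitly to get $\sum_C [C]^2 \in \{1,-3\}$, whereas the paper uses the quicker parity observation that $2\sigma(M_2/G) = -1 + \sum_C [C]^2$ forces $\sum_C [C]^2$ to be odd; your explicit values are exactly what the paper establishes separately in the next lemma.
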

\begin{proof}
If $g = A$, let $S = (S_1, S_2, \Sigma)$ and if $g = -AB$, let $S = (S_1, -S_2, \Sigma)$. With respect to the $\Z$-basis $S$ of $H_2(M_2;\Z)$,
\[
	g = \begin{pmatrix}
	0 & 1 & 0 \\
	1 & 0 & 0 \\
	0 & 0 & 1
	\end{pmatrix}.
\]
By Remark \ref{rmk:euler-char}, $\chi(\Fix(f_g)) = 3$ and so $\Fix(f_g) \neq \emptyset$. Proposition \ref{prop:edmonds} implies that $\beta_1(\Fix(f_g)) = 0$ and $\beta_0(\Fix(f_g)) + \beta_2(\Fix(f_g)) = 3$. Combining with the fact that each $\Fix(\varphi)$ is a disjoint union of surfaces and isolated points shows that
\[
	\Fix(f_g) \cong \mathbb S^2 \sqcup \{p\} \text{ or }[3]
\]
where $[n]$ denotes the set of $n$ distinct isolated points. By the $G$-signature theorem (Theorem \ref{thm:g-signature}) and the fact that $\sigma(M_2) = -1$,
\[
	2\sigma(M_2/G) = -1 + \sum_C Q_{M_2}([C],[C]).
\]
Comparing the parity of both sides of the equation shows that there must exist at least one $2$-dimensional component of the fixed set. 
\end{proof}

The next lemma determines the self-intersection numbers of the submanifolds $\mathbb S^2$ of $\Fix(f_A)$ and $\Fix(f_{-AB})$.
\begin{lem}\label{lem:S-int-number}
Let $g = A$ or $-AB$. Suppose $g$ is realized by an order $2$ diffeomorphism $f_g$ or $f_{-AB}$ respectively with $\Fix(f_g) \cong F_g \sqcup \{p\}$ and $F_g \cong \mathbb S^2$. Then 
\[
	Q_{M_2}([F_A], [F_A]) = 1, \qquad Q_{M_2}([F_{-AB}], [F_{-AB}]) = -3.
\]
\end{lem}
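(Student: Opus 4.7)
The plan is to apply the Hirzebruch $G$-signature theorem (Theorem \ref{thm:g-signature}) directly. The fixed set of $f_g$ consists of the orientable surface $F_g \cong \mathbb S^2$ (spheres are automatically orientable) and one isolated point, which contributes nothing to the sum in the $G$-signature formula. So the theorem reduces to
\[
    2\sigma(M_2/G) = \sigma(M_2) + Q_{M_2}([F_g], [F_g]),
\]
and since $\sigma(M_2) = -1$, the whole problem becomes computing $\sigma(M_2/G)$, i.e.\ the signature of the restriction of $Q_{M_2}$ to the $g$-invariant subspace of $H_2(M_2; \R)$. By Freedman--Quinn (Theorem \ref{thm:freedman--quinn}), $(f_g)_* = g$, so this invariant subspace is determined purely from the matrix of $g$.

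For $g = A$, written in the $\Z$-basis $(S_1, S_2, \Sigma)$, the $(+1)$-eigenspace on $H_2(M_2; \R)$ is spanned by $S_1 + S_2$ and $\Sigma$. Using $Q_{M_2}(S_i, S_i) = 0$, $Q_{M_2}(S_1, S_2) = 1$, $Q_{M_2}(\Sigma, \Sigma) = -1$, and $Q_{M_2}(S_i, \Sigma) = 0$, the Gram matrix of the restriction is $\mathrm{diag}(2, -1)$, so $\sigma(M_2/G) = 0$. Substitution gives $Q_{M_2}([F_A], [F_A]) = 1$.

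For $g = -AB$, a direct computation in the same basis shows $-AB$ sends $S_1 \mapsto -S_2$, $S_2 \mapsto -S_1$, $\Sigma \mapsto \Sigma$, so the $(+1)$-eigenspace is spanned by $S_1 - S_2$ and $\Sigma$. The Gram matrix of $Q_{M_2}$ restricted to this subspace is $\mathrm{diag}(-2, -1)$, so $\sigma(M_2/G) = -2$. The $G$-signature theorem then gives $Q_{M_2}([F_{-AB}], [F_{-AB}]) = -4 + 1 = -3$.

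The proof is essentially a computation with no real obstacle; the only thing worth being careful about is that the verification of the orientability hypothesis of Theorem \ref{thm:g-signature} is free (since $F_g$ is a sphere), and that the isolated fixed point $p$ does not contribute to the right-hand side of the $G$-signature formula, as explained in the remark following Theorem \ref{thm:g-signature}.
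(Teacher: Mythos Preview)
Your proof is correct and follows essentially the same approach as the paper: compute the $g$-invariant subspace of $H_2(M_2;\R)$ in the basis $(S_1,S_2,\Sigma)$, read off the signature of the restricted form, and plug into the $G$-signature theorem. The only difference is cosmetic---you make the orientability check and the vanishing contribution of the isolated point explicit, which the paper leaves implicit.
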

\begin{proof}
Let $G = \langle f_g \rangle$. Then
\[
	H_2(M_2; \R)^G = \begin{cases}
	\R\{S_1 + S_2, \Sigma\} & \text{if } g = A, \\
	\R\{S_1-S_2, \Sigma\} & \text{if } g= -AB.
	\end{cases}
\]
The restriction of $Q_{M_2}$ to $H_2(M_2; \R)^G$ with respect to the $\R$-bases $(S_1+S_2, \Sigma)$ and $(S_1-S_2, \Sigma)$ respectively are
\[
	Q_{M_2}|_{H_2(M_2; \R)^G} = \begin{cases}
	\begin{pmatrix}
	2 & 0 \\
	0 & -1
	\end{pmatrix} & \text{if }g = A, \\
	\begin{pmatrix}
	-2 & 0 \\
	0 & -1
	\end{pmatrix} & \text{if }g = -AB.
	\end{cases}
\]
This shows that $\sigma(M_2/\langle f_A \rangle) = 0$ and $\sigma(M_2/\langle f_{-AB} \rangle) = -2$. Applying the $G$-signature theorem (Theorem \ref{thm:g-signature}) to both cases above shows
\[
	0 = -1 + Q_{M_2}([F_A],[F_A]) \qquad -4 = -1 + Q_{M_2}([F_{-AB}], [F_{-AB}]). \qedhere
\]
\end{proof}

Lemma \ref{lem:S-int-number} yields the appropriate homological obstructions to prove the following nonrealizability results.
\begin{prop}\label{prop:non-ex-1}
There is no lift of $\langle A, -B \rangle\cong(\Z/2\Z)^2 \leq \Mod(M_2)$ or of $\langle A, B \rangle\cong(\Z/2\Z)^2 \leq \Mod(M_2)$ to $\Diff^+(M_2)$.
\end{prop}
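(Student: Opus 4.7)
The plan is to assume a lift exists and derive a contradiction from how the second generator of the group must act on the unique 2-dimensional component of $\Fix(f_A)$; both subgroups succumb to the same argument.

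Suppose $\langle f_A, f_B \rangle \leq \Diff^+(M_2)$ is a lift of $\langle A, B \rangle$. By Lemma \ref{lem:fixed-A-AB}, $\Fix(f_A) = F_A \sqcup \{p\}$ with $F_A \cong \mathbb S^2$. Since $f_B$ commutes with $f_A$, it preserves $\Fix(f_A)$ as a set, and because $F_A$ is the only 2-dimensional component, $f_B(F_A) = F_A$. In particular, $(f_B)_* [F_A] = \pm [F_A]$ in $H_2(M_2;\Z)$, the ambiguity being whether $f_B|_{F_A}$ preserves or reverses a chosen orientation on $\mathbb S^2$.

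Next I would nail down $[F_A]$ in coordinates. Because $f_A$ is the identity on $F_A$, the class $[F_A]$ is $A$-invariant, and the $A$-fixed sublattice in the $\Z$-basis $(S_1, S_2, \Sigma)$ equals $\Z(S_1 + S_2) \oplus \Z \Sigma$. Writing $[F_A] = \alpha(S_1+S_2) + \gamma \Sigma$ and invoking Lemma \ref{lem:S-int-number} yields the Pell-type relation $2\alpha^2 - \gamma^2 = 1$, which has no integer solution with $\alpha = 0$ (which would force $\gamma^2 = -1$) or $\gamma = 0$ (which would force $2\alpha^2 = 1$). Since $B$ acts as $\mathrm{diag}(1, 1, -1)$ on $(S_1, S_2, \Sigma)$, a direct computation gives $B [F_A] = \alpha(S_1+S_2) - \gamma \Sigma$; matching this to $\pm [F_A]$ forces $\gamma = 0$ or $\alpha = 0$, a contradiction. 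For the group $\langle A, -B \rangle$ the argument is the same after substituting $-B = \mathrm{diag}(-1, -1, 1)$, which gives $-B[F_A] = -\alpha(S_1+S_2) + \gamma \Sigma$, so the equation $-B[F_A] = \pm [F_A]$ again forces $\alpha$ or $\gamma$ to vanish.

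The only step requiring care is the set-level identity $f_B(F_A) = F_A$: if $\Fix(f_A)$ had more than one 2-dimensional component, $f_B$ could permute them, and one would have to track the interaction of $f_B$ with each sphere separately. This is ruled out precisely by the uniqueness clause in Lemma \ref{lem:fixed-A-AB}. Once that is in hand, the entire proof reduces to the elementary arithmetic observation that $2\alpha^2 - \gamma^2 = 1$ admits no integer solution in which either coordinate is zero.
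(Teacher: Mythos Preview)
Your argument is correct and follows essentially the same approach as the paper: both use Lemma~\ref{lem:fixed-A-AB} to isolate the unique $2$-sphere $F_A$, observe that $f_{\pm B}$ must preserve it so that $\pm B[F_A]=\pm[F_A]$, and then derive a contradiction with $Q_{M_2}([F_A],[F_A])=1$ from Lemma~\ref{lem:S-int-number}. The only cosmetic difference is that you first impose $A$-invariance to write $[F_A]=\alpha(S_1+S_2)+\gamma\Sigma$ before applying $\pm B$, whereas the paper works directly with the $(\pm1)$-eigenspace decomposition $H_2(M_2;\Z)=\Z\{\Sigma\}\oplus\Z\{S_1,S_2\}$ of $\pm B$; the resulting contradictions ($2\alpha^2=1$ or $-\gamma^2=1$ versus $2ab=1$ or $-a^2=1$) are the same.
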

\begin{proof}
Suppose there is a lift $\langle f_A, f_{\pm B} \rangle \cong (\Z/2\Z)^2 \leq \Diff^+(M_2)$ with $[f_A] = A$ and $[f_{\pm B}] = \pm B$. Because $f_{\pm B}$ and $f_A$ commute, $f_{\pm B}$ restricts to a diffeomorphism on $F_A$, the unique $2$-dimensional connected component of $\Fix(f_A)$ by Lemma \ref{lem:fixed-A-AB}. Therefore $\pm B([F_A]) = [F_A]$ or $-[F_A]$. Observe that $H_2(M_2; \Z)$ has a direct sum decomposition 
\[
	H_2(M_2; \Z) = \Z\{\Sigma\} \oplus \Z\{ S_1, S_2\}
\]
into a sum of $(1)$- and $(-1)$-eigenspaces of of $\pm B$, meaning that either $[F_A] \in \Z\{\Sigma\}$ or $[F_A] \in \Z\{ S_1, S_2\}$.

By Lemma \ref{lem:S-int-number}, $Q_{M_2}([F_A], [F_A]) = 1$. If $[F_A] = a\Sigma$ for any $a \in \Z$ then $-a^2 = Q_{M_2}([F_A], [F_A]) = 1$ which is a contradiction. If $[F_A] = aS_1 + bS_2$ for any $a, b \in \Z$ then $2ab = Q_{M_2}([F_A], [F_A]) = 1$ which is also a contradiction.
\end{proof}

\begin{prop}\label{prop:non-ex-3}
There is no lift of $\langle -AB, -A \rangle \cong (\Z/2\Z)^2 \leq \Mod(M_2)$ to $\Diff^+(M_2)$.
\end{prop}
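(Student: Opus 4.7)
The plan is to imitate the argument of Proposition \ref{prop:non-ex-1}, except that I would play off the element $-AB$ against $-A$ instead of $A$ against $\pm B$. Suppose for contradiction there is a lift $\langle f_{-A}, f_{-AB} \rangle \cong (\Z/2\Z)^2 \leq \Diff^+(M_2)$ with $[f_{-A}] = -A$ and $[f_{-AB}] = -AB$. Lemma \ref{lem:fixed-A-AB} applied to $g = -AB$ guarantees a unique $2$-dimensional fixed component $F_{-AB} \cong \mathbb S^2$, and Lemma \ref{lem:S-int-number} pins down $Q_{M_2}([F_{-AB}],[F_{-AB}]) = -3$.

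Because $f_{-A}$ and $f_{-AB}$ commute, $f_{-A}$ preserves $F_{-AB}$ setwise. Hence the induced action of $-A$ on $H_2(M_2;\Z)$ sends $[F_{-AB}]$ to $\pm [F_{-AB}]$, i.e.\ $[F_{-AB}]$ lies in an eigenspace of $-A$ for the eigenvalue $+1$ or $-1$. A direct diagonalization in the basis $(S_1, S_2, \Sigma)$ (where $-A$ swaps $S_1 \leftrightarrow -S_2$ and sends $\Sigma \mapsto -\Sigma$) gives
\[
    \ker(-A - I) = \R\{S_1 - S_2\}, \qquad \ker(-A + I) = \R\{S_1 + S_2,\, \Sigma\}.
\]
So $[F_{-AB}]$ is either $a(S_1 - S_2)$ or $a(S_1 + S_2) + c\Sigma$ for some $a, c \in \Z$.

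In the first case $Q_{M_2}([F_{-AB}], [F_{-AB}]) = -2a^2$, which cannot equal $-3$ for any $a \in \Z$. In the second case the self-intersection is $2a^2 - c^2$, and the required equation $2a^2 - c^2 = -3$ is the only genuinely nontrivial point of the argument; this is the step I expect to be the main (but still mild) obstacle. I would dispose of it by a mod $3$ congruence: the equation becomes $a^2 + c^2 \equiv 0 \pmod 3$, which (since $0$ and $1$ are the only squares mod $3$) forces both $3 \mid a$ and $3 \mid c$, and then $9 \mid 2a^2 - c^2$, contradicting $2a^2 - c^2 = -3$. Either case yields a contradiction, so no lift exists.
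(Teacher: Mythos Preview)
Your argument is correct and follows the same overall skeleton as the paper's proof: invoke Lemmas \ref{lem:fixed-A-AB} and \ref{lem:S-int-number} for the element $-AB$, then use that $f_{-A}$ preserves $F_{-AB}$ to force $[F_{-AB}]$ into one of the two eigenspaces of $-A$ on $H_2(M_2;\Z)$, and rule out each case. The first case is handled identically.

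The difference is in the second case. The paper does not attack the Diophantine equation $2a^2-c^2=-3$ directly; instead it brings in one more constraint you did not use, namely that $F_{-AB}\subseteq \Fix(f_{-AB})$ forces $(-AB)([F_{-AB}])=[F_{-AB}]$. Intersecting the $(+1)$-eigenspace of $-AB$ (which is $\Z\{S_1-S_2,\Sigma\}$) with $\Z\{S_1+S_2,\Sigma\}$ collapses the class to $b\Sigma$, and then $-b^2=-3$ is immediately impossible. Your mod $3$ descent is a perfectly valid alternative that avoids this extra eigenspace computation; the paper's route trades the arithmetic trick for a second geometric constraint already implicit in the setup. Both are short, and neither is clearly superior.
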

\begin{proof}
Suppose there is a lift $\langle f_{-AB}, f_{-A} \rangle \cong (\Z/2\Z)^2\leq \Diff^+(M_2)$ with $[f_{-AB}] = -AB$ and $[f_{-A}] = -A$. Because $f_{-AB}$ and $f_{-A}$ commute, $f_{-A}$ restricts to a diffeomorphism on $F_{-AB}$, which is the unique $2$-dimensional connected component of $\Fix(f_{-AB})$ by Lemma \ref{lem:fixed-A-AB}. There is a decomposition 
\[
	H_2(M_2; \Q) = \Q\{S_1-S_2\} \oplus \Q\{S_1 + S_2, \Sigma\}
\]
into $(1)$- and $(-1)$-eigenspaces of $-A$, which means that either $[F_{-AB}] \in \Q\{S_1-S_2\} \cap H_2(M_2; \Z)$ or $[F_{-AB}] \in \Q\{S_1 + S_2, \Sigma\} \cap H_2(M_2; \Z)$. 

If $[F_{-AB}] \in \Q\{S_1-S_2\} \cap H_2(M_2; \Z)$ then $[F_{-AB}] = a(S_1 - S_2)$ for some $a \in \Z$. Compute that 
\[
	-3 = Q_{M_2}([F_{-AB}], [F_{-AB}]) = -2a^2,
\]
which is a contradiction by Lemma \ref{lem:S-int-number}.

If $[F_{-AB}] \in \Q\{S_1 + S_2, \Sigma\} \cap H_2(M_2; \Z)$ then $[F_{-AB}] = a(S_1+S_2) + b\Sigma$ for some $a, b \in \Z$. Moreover, $H_2(M_2; \Q)$ has another direct sum decomposition 
\[
	H_2(M_2; \Q) \cong \Q\{S_1 - S_2, \Sigma\} \oplus \Q\{S_1 + S_2\}
\] 
into a sum of $(1)$- and $(-1)$-eigenspaces of $-AB$. Because $-AB([F_{-AB}]) = [F_{-AB}]$, 
\[
	[F_{-AB}] = a(S_1+S_2) + b\Sigma \in \Z\{S_1 - S_2, \Sigma\}
\]
which implies that $[F_{-AB}] = b\Sigma$. However, this is impossible since $-3 = Q_{M_2}([F_{-AB}], [F_{-AB}]) = -b^2$ by Lemma \ref{lem:S-int-number}.
\end{proof}

It turns out that with the exception of the subgroups of Propositions \ref{prop:non-ex-1} and \ref{prop:non-ex-3}, all other subgroups of $G_1$ are realizable. We explicitly construct the lifts of these subgroups of $G_1$ to $\Diff^+(M_2)$ in this next proposition.
\begin{prop}\label{prop:ex}
If $G \leq \Mod(M_2)$ is one of
\[
	\langle A, -I_3 \rangle, \, \langle B, -B \rangle, \, \langle AB, -I_3 \rangle, \, \text{or}\, \langle AB, -B\rangle
\]
then $G$ is realized by a complex equivariant connected sum.
\end{prop}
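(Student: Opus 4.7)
My plan is to realize each of the four subgroups as a complex equivariant connected sum using the decomposition $M_2 \cong (\CP^1 \times \CP^1) \# \overline{\CP^2}$ together with Lemma \ref{lem:glue}. The key observation is that in the basis $(S_1, S_2, \Sigma)$ each of $A, B, AB, -I_3$ preserves the splitting $\Z\{S_1, S_2\} \oplus \Z\{\Sigma\}$ of $H_2(M_2;\Z)$. Hence it suffices to realize the restriction of the action to each summand by a biholomorphic or anti-biholomorphic involution of the corresponding complex factor, and to assemble the two factor actions into an equivariant connected sum via Lemma \ref{lem:glue}.

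I would draw from a small stock of commuting involutions. On $\CP^1 \times \CP^1$: the swap $\sigma([X{:}Y],[W{:}Z]) = ([W{:}Z],[X{:}Y])$, the partial sign change $\iota_1([X{:}Y],[W{:}Z]) = ([X{:}{-}Y],[W{:}Z])$ (a biholomorphism, hence trivial on $H_2$), and complex conjugation $\tau_1$. On $\CP^2$: the biholomorphism $\iota_2[X{:}Y{:}Z] = [X{:}Y{:}{-}Z]$ (trivial on $H_2$) and complex conjugation $\tau_2$. For each of the four subgroups I would then specify a $(\Z/2\Z)^2$-subgroup of $\Diff^+(\CP^1\times\CP^1)$, a $(\Z/2\Z)^2$-subgroup of $\Diff^+(\overline{\CP^2})$, and an isomorphism $\Phi$ between them so that the resulting direct-sum action on $H_2(M_2;\Z)$ matches the prescribed subgroup. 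For $\langle A, -I_3\rangle$ take $\langle \sigma, \tau_1\rangle$ and $\langle \iota_2, \tau_2\rangle$ with $\Phi(\sigma) = \iota_2$, $\Phi(\tau_1) = \tau_2$. For $\langle B, -B\rangle$ take $\langle \iota_1, \tau_1\rangle$ and $\langle \tau_2, \iota_2\rangle$ with $\Phi(\iota_1) = \tau_2$, $\Phi(\tau_1) = \iota_2$. For $\langle AB, -B\rangle$ take $\langle \sigma, \tau_1\rangle$ and $\langle \tau_2, \iota_2\rangle$ with $\Phi(\sigma) = \tau_2$, $\Phi(\tau_1) = \iota_2$. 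For $\langle AB, -I_3\rangle$ take $\langle \sigma, \tau_1\rangle$ and $\langle \tau_2, \iota_2\rangle$ with $\Phi(\sigma) = \tau_2$, $\Phi(\tau_1) = \iota_2\tau_2$. In each case a direct check against the matrix forms of $A, B, AB, -I_3$ in the basis $(S_1, S_2, \Sigma)$ confirms that the induced action on $H_2(M_2;\Z)$ is the desired subgroup.

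The remaining step is to verify the hypotheses of Lemma \ref{lem:glue} at the common fixed points $p = ([1{:}0],[1{:}0])$ and $q = [1{:}0{:}0]$. In local complex coordinates $(y, t)$ near $p$ and $(y, z)$ near $q$, each involution above has a real $2$-dimensional fixed locus through the base point, namely a complex line for $\sigma, \iota_1, \iota_2$ and a real $2$-plane for $\tau_1, \tau_2, \iota_2\tau_2$; moreover, for each commuting pair the two fixed loci meet in a real line. This is a routine coordinate computation. Lemma \ref{lem:glue} then produces the lift $\tilde G \leq \Diff^+(M_2)$, which is by construction a complex equivariant connected sum.

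The main obstacle I anticipate is the case $\langle AB, -I_3\rangle$: the restriction of this group to $\Z\{\Sigma\}$ sends both $AB$ and $-I_3$ to $-1$, so the naive assignment on $\overline{\CP^2}$ would send them both to $\tau_2$ and fail to give the faithful $(\Z/2\Z)^2$-action that Lemma \ref{lem:glue} demands. The resolution is to absorb the resulting kernel $\langle -AB\rangle$ by the auxiliary biholomorphism $\iota_2$, setting $\Phi(\tau_1) = \iota_2\tau_2$ rather than $\tau_2$; one then checks that $\iota_2$ (being trivial on $H_2(\overline{\CP^2})$ and commuting with $\tau_2$) still induces the correct action on $\Sigma$ while promoting the image of $\Phi$ to the full $(\Z/2\Z)^2$-subgroup $\langle \tau_2, \iota_2\rangle \leq \Diff^+(\overline{\CP^2})$.
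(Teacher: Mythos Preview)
Your approach is essentially the same as the paper's: both use the splitting $M_2 \cong (\CP^1\times\CP^1)\#\overline{\CP^2}$, choose commuting (anti-)biholomorphic involutions on each factor whose induced maps on $H_2$ match the prescribed generators in the basis $(S_1,S_2,\Sigma)$, and then invoke Lemma~\ref{lem:glue} after checking the $2$-dimensional fixed loci meet in a real $1$-manifold at a common fixed point. The paper's involutions differ from yours only cosmetically (sign changes on different coordinates, base points $([0{:}1],[0{:}1])$ and $[0{:}0{:}1]$ rather than $([1{:}0],[1{:}0])$ and $[1{:}0{:}0]$), and in the case $\langle AB,-I_3\rangle$ the paper places the auxiliary twist on the $AB$ side (taking $h_{AB}^2=[-\bar X{:}\bar Y{:}\bar Z]$) whereas you place it on the $-I_3$ side (taking $\Phi(\tau_1)=\iota_2\tau_2$); these are equivalent since both amount to perturbing one generator by a biholomorphic involution trivial on $H_2$ to restore faithfulness.
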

\begin{proof}
Define diffeomorphisms $h_A^1, h_B^1, h_{-I_3}^1, h_{AB}^1, h_{-B}^1 : \CP^1 \times \CP^1 \to \CP^1 \times \CP^1$ by:
\begin{align*}
h_A^1,\, h_{AB}^1: ([X_1:X_2],[Y_1:Y_2]) &\mapsto ([Y_1:Y_2], [X_1:X_2]), \\
h_B^1: ([X_1:X_2], [Y_1:Y_2]) &\mapsto ([-X_1:X_2], [Y_1:Y_2]), \\
h_{-I_3}^1,\, h_{-B}^1: ([X_1:X_2], [Y_1:Y_2]) &\mapsto ([\overline{X_1}:\overline{X_2}], [\overline{Y_1}:\overline{Y_2}]).
\end{align*}
The diffeomorphisms $h_A^1, h_B^1, h_{-I_3}^1, h_{AB}^1, h_{-B}^1$ have order $2$. On the other hand, define diffeomorphisms $h_A^2, h_B^2, h_{-I_3}^2, h_{AB}^2, h_{-B}^2: \CP^2 \to \CP^2$ by:
\begin{align*}
h_A^2, \, h_{-B}^2: [X:Y:Z] &\mapsto [-X:Y:Z], \\
h_B^2, \, h_{-I_3}^2: [X:Y:Z] &\mapsto [\overline X: \overline Y: \overline Z], \\
h_{AB}^2: [X:Y:Z] &\mapsto [-\overline X: \overline Y: \overline Z].
\end{align*}
The fixed sets of the diffeomorphisms defined above are: 
\begin{align*}
\Fix(h_A^1) = \Fix(h_{AB}^1) &= \{([X_1:X_2], [X_1:X_2])\} \cong \CP^1, \\
\Fix(h_B^1) &= \{([0:1],[Y_1:Y_2]) \} \sqcup \{([1:0],[Y_1:Y_2])\} \cong \CP^1 \sqcup \CP^1, \\
\Fix(h_{-I_3}^1) = \Fix(h_{-B}^1) &=  \{([a:b], [c:d]) \in \RP^1 \times \RP^1 \} \cong \mathbb T^2, \\
\Fix(h_A^2) = \Fix(h_{-B}^2) &= \{[0:Y:Z] \in \CP^2\} \sqcup \{[1:0:0]\} \cong \CP^1 \sqcup \{p\}, \\
\Fix(h_B^2) = \Fix(h_{-I_3}^2) &= \{[a:b:c]: a, b, c\in \R\} \cong \RP^2,\\
\Fix(h_{AB}^2)&= \{[ai : b: c] : a, b, c\in \R\} \cong \RP^2.
\end{align*}

\begin{enumerate}
	\item Compute that 
	\[
	\langle h_A^1, h_{-I_3}^1 \rangle \cong (\Z/2\Z)^2 \leq \Diff^+(\CP^1 \times \CP^1)\quad\text{and}\quad \langle h_A^2, h_{-I_3}^2 \rangle \cong (\Z/2\Z)^2 \leq \Diff^+(\overline{\CP^2}).
	\]
	Let $q_1 = ([0:1], [0:1]) \in \Fix(h_A^1) \cap \Fix(h_{-I_3}^1)$ and $q_2 = [0:0:1] \in \Fix(h_A^2) \cap \Fix(h_{-I_3}^2)$. The connected component $F_{q_1}$ of $q_1$ in $\Fix(h_A^1) \cap \Fix(h_{-I_3}^1)$ is
	\[
		F_{q_1} = \{([a:b],[a:b]) : a, b \in \R\} \cong \mathbb S^1.
	\]
	The connected component $F_{q_2}$ of $q_2$ in $\Fix(h_A^2) \cap \Fix(h_{-I_3}^2)$ is
	\[
		F_{q_2} = \{[0:a:b] : a, b\in \R\} \cong \mathbb S^1.
	\]
	Therefore, apply Lemma \ref{lem:glue} to see that $\langle h_A^1 \# h_A^2, h_{-I_3}^1 \# h_{-I_3}^2 \rangle \leq \Diff^+(M_2)$ is a realization of $\langle A, -I_3 \rangle$ by a complex equivariant connected sum.

	\item Compute that 
	\[
	\langle h_B^1, h_{-B}^1 \rangle \cong (\Z/2\Z)^2 \leq \Diff^+(\CP^1 \times \CP^1)\quad\text{and}\quad \langle h_B^2, h_{-B}^2 \rangle \cong (\Z/2\Z)^2 \leq \Diff^+(\overline{\CP^2}).
	\]
	Let $q_1 = ([0:1], [0:1]) \in \Fix(h_B^1) \cap \Fix(h_{-B}^1)$ and $q_2 = [0:0:1] \in \Fix(h_B^2) \cap \Fix(h_{-B}^2)$. The connected component $F_{q_1}$ of $q_1$ in $\Fix(h_B^1) \cap \Fix(h_{-B}^1)$ is
	\[
		F_{q_1} = \{([0:1], [a:b]) : a, b\in \R\} \cong \mathbb S^1.
	\]
	The connected component $F_{q_2}$ of $q_2$ in $\Fix(h_B^2) \cap \Fix(h_{-B}^2)$ is
	\[
		F_{q_2} =  \{[0:a:b] : a, b\in\R\} \cong \mathbb S^1.
	\]
	Therefore, apply Lemma \ref{lem:glue} to see that $\langle h_B^1 \# h_B^2, h_{-B}^1 \# h_{-B}^2 \rangle \leq \Diff^+(M_2)$ is a realization of $\langle B, -B \rangle$ by a complex equivariant connected sum.

	\item Compute that 
	\[
	\langle h_{-I_3}^1, h_{AB}^1 \rangle \cong (\Z/2\Z)^2 \leq \Diff^+(\CP^1 \times \CP^1)\quad\text{and}\quad\langle h_{-I_3}^2, h_{AB}^2 \rangle \cong (\Z/2\Z)^2 \leq \Diff^+(\overline{\CP^2}).
	\]
	Let $q_1 = ([0:1],[0:1]) \in \Fix(h_{-I_3}^1) \cap \Fix(h_{AB}^1)$ and $q_2 = [0:0:1] \in \Fix(h_{-I_3}^2) \cap \Fix(h_{AB}^2)$. The connected component $F_{q_1}$ of $q_1$ in $\Fix(h_{-I_3}^1) \cap \Fix(h_{AB}^1)$ is
	\[
		F_{q_1} = \{([a:b],[a:b]) : a, b\in \R\} \cong \mathbb S^1.
	\]
	The connected component $F_{q_2}$ of $q_2$ in $\Fix(h_{-I_3}^2) \cap \Fix(h_{AB}^2)$ is
	\[
		F_{q_2} = \{[0:a:b] : a, b\in \R\} \cong \mathbb S^1.
	\]
	Therefore, apply Lemma \ref{lem:glue} to see that $\langle h_{-I_3}^1 \# h_{-I_3}^2, h_{AB}^1 \# h_{AB}^2\rangle \leq \Diff^+(M_2)$ is a realization of $\langle -I_3, AB \rangle$ by a complex equivariant connected sum.

	\item Compute that
	\[
	\langle h_{AB}^1, h_{-B}^1 \rangle \cong (\Z/2\Z)^2 \leq \Diff^+(\CP^1 \times \CP^1)\quad\text{and}\quad\langle h_{AB}^2, h_{-B}^2 \rangle \cong (\Z/2\Z)^2 \leq \Diff^+(\overline{\CP^2}).
	\]
	Let $q_1 = ([0:1],[0:1]) \in \Fix(h_{AB}^1) \cap \Fix(h_{-B}^1)$ and $q_2 = [0: 0: 1] \in \Fix(h_{AB}^2) \cap \Fix(h_{-B}^2)$. The connected component $F_{q_1}$ of $q_1$ in $\Fix(h_{AB}^1) \cap \Fix(h_{-B}^1)$ is
	\[
		F_{q_1} = \{([a:b],[a:b]) : a, b\in \R\} \cong \mathbb S^1.
	\]
	The connected component $F_{q_2}$ of $q_2$ in $\Fix(h_{AB}^2) \cap \Fix(h_{-B}^2)$ is
	\[
		F_{q_2} = \{[0:a:b] : a, b\in \R\} \cong \mathbb S^1.
	\]
	Therefore, apply Lemma \ref{lem:glue} to see that $\langle h_{AB}^1 \# h_{AB}^2, h_{-B}^1\# h_{-B}^2 \rangle \leq \Diff^+(M_2)$ is a realization of $\langle AB, -B \rangle$ by a complex equivariant connected sum. \qedhere
\end{enumerate}
\end{proof}

\subsubsection{$G_2 = \langle \Phi, \Psi, -I_3 \rangle \cong D_4 \times \Z/2\Z$}\label{sec:g2}

In this section we show that the subgroup $G_2$ is realizable by diffeomorphisms of $M_2$ given by a complex equivariant connected sum. 

\begin{prop}\label{prop:Z2-D4}
The group $G_2 \cong D_4 \times (\Z/2\Z) \leq \Mod(M_2)$ is realized by a complex equivariant connected sum. 
\end{prop}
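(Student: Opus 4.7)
The plan is to realize $G_2 \cong D_4 \times \Z/2\Z$ as a complex equivariant connected sum of the form $\CP^2 \# 2\overline{\CP^2}$ in the sense of Definition \ref{defn:cecs}(2). I will take $N_1 = \CP^2$ equipped with a $G_2$-action and $N_2 = \overline{\CP^2}$ equipped with an action of the index-$2$ subgroup $H = \langle \Phi^2, \Psi, -I_3 \rangle \trianglelefteq G_2$; the element $\Phi$ will swap the two $\overline{\CP^2}$-summands so that $G_2$ acts on $2 N_2$ via induction from the $H$-action on $N_2$.

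For the $G_2$-action on $\CP^2$, take the dihedral biholomorphisms $\Phi_0[X:Y:Z] = [-Y:X:Z]$ (order $4$) and $\Psi_0[X:Y:Z] = [X:-Y:Z]$ (order $2$) together with complex conjugation $\sigma[X:Y:Z] = [\overline X:\overline Y:\overline Z]$. A direct check of the relation $\Psi_0 \Phi_0 \Psi_0 = \Phi_0^{-1}$ and of $[\Phi_0, \sigma] = [\Psi_0, \sigma] = 1$ gives $\langle \Phi_0, \Psi_0, \sigma \rangle \cong G_2$. This action has a size-two $G_2$-orbit $\{[1:0:0], [0:1:0]\}$, each point with stabilizer exactly $H$, and $\Phi_0$ swaps the two points. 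For the $H$-action on $\overline{\CP^2}$, take the three commuting anti-biholomorphic involutions $\tau_1 = \sigma$, $\tau_2[X:Y:Z] = [-\overline X:\overline Y:\overline Z]$, $\tau_3[X:Y:Z] = [\overline X:-\overline Y:\overline Z]$ and identify $\Phi^2 \mapsto \tau_1, \Psi \mapsto \tau_2, -I_3 \mapsto \tau_3$. Since each $\tau_i$ is anti-biholomorphic, it acts as $-1$ on $H_2(\overline{\CP^2}) = \Z E_1$, matching the prescribed action of each generator of $H$.

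The delicate step is verifying that the tangential representations of $H$ on $T_{[1:0:0]}\CP^2$ and $T_{[1:0:0]}\overline{\CP^2}$ agree via an orientation-reversing $\R$-linear isomorphism. Computing in the affine chart $(y,z) \mapsto [1:y:z]$ and the real coordinates $(y_R, y_I, z_R, z_I)$, both representations split as a sum of four real one-dimensional characters of $H$, and both character multisets (evaluated on the ordered generators $\Phi^2, \Psi, -I_3$) equal $\{(1,-1,1), (1,-1,-1), (-1,1,1), (-1,1,-1)\}$. Since the character multiset is a complete invariant for real representations of an elementary abelian $2$-group, an $H$-equivariant isomorphism exists; matching eigenspaces of equal character yields a $3$-cycle permutation fixing one coordinate (an even permutation), and composing with a single sign flip on that fixed coordinate produces the required orientation-reversing $H$-equivariant identification.

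Applying Definition \ref{defn:cecs}(2) with this gluing data produces a smooth $G_2$-action on $\CP^2 \# 2\overline{\CP^2} = M_2$ as a complex equivariant connected sum. To conclude, I verify that the induced action on $H_2(M_2; \Z)$ equals $\langle \Phi, \Psi, -I_3 \rangle$: the action on $H$ is read off from whether each generator is biholomorphic or anti-biholomorphic on the $\CP^2$-summand; the $H$-action on each $E_i$ is determined by the anti-biholomorphic nature of the corresponding $\tau_i$; and the sign ambiguity in the orientation-reversing tangent-space identification can be fixed so that $\Phi(E_1) = E_2$, which together with $\Phi^2(E_i) = -E_i$ forces $\Phi(E_2) = -E_1$. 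The principal obstacle is the character-matching and sign-bookkeeping of the tangential representations; once that is settled, the definition of complex equivariant connected sum supplies all remaining structure.
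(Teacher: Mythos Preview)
Your approach is essentially the paper's: both realize $G_2$ as a complex equivariant connected sum $\CP^2 \# 2\overline{\CP^2}$ using exactly the maps $\Phi_0, \Psi_0, \sigma$ on $\CP^2$ that you describe, the $G_2$-orbit $\{[1:0:0],[0:1:0]\}$ with stabilizer $H = \langle \Phi^2, \Psi, -I_3\rangle$, and an $H$-action by anti-biholomorphic involutions on each $\overline{\CP^2}$-summand. The paper chooses slightly different formulas for the $H$-action on $\overline{\CP^2}$ and verifies the gluing by writing down explicit charts $i_k: B \to \CP^2$ and $j_k: B \to N_k$ and checking $h_c(i_k(tx)) \sim g_c(j_k((1-t)x))$ coordinate by coordinate; you instead argue abstractly by matching the character multisets of the two tangential $H$-representations, which is cleaner but demands careful sign bookkeeping.

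One small slip in that bookkeeping: since the affine chart $(y,z) \mapsto [1:y:z]$ is \emph{negatively} oriented on $N_2 = \overline{\CP^2}$, your even $3$-cycle permutation of eigenlines already gives an orientation-reversing map $T_{[1:0:0]}\CP^2 \to T_{[1:0:0]}\overline{\CP^2}$; composing with the extra sign flip you propose would make it orientation-preserving instead. This is harmless for the argument---because each of the four characters occurs with multiplicity one, the $H$-equivariant isomorphisms are exactly the diagonal maps in the eigenbasis, so both orientation classes are available and you can simply drop the sign flip.
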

\begin{proof}
Define diffeomorphisms $h_{\Phi}, h_{\Psi}, h_{-I_3}: \CP^2 \to \CP^2$ by
\begin{align*}
h_\Phi([X:Y:Z]) &= [-Y:X:Z],\\
h_\Psi([X:Y:Z]) &= [X:-Y:Z],\\
h_{-I_3}([X:Y:Z]) &= [\overline X: \overline Y : \overline Z].
\end{align*}
Let $p_1 = [1:0:0] \in \CP^2$ and $p_2 = [0:1:0]\in \CP^2$. The subgroup $\langle h_\Phi, h_\Psi, h_{-I_3} \rangle \leq \Diff^+(\CP^2)$ preserves the set $\{p_1, p_2\} \subseteq \CP^2$. It is straightforward to check that $\langle h_\Phi, h_\Psi, h_{-I_3} \rangle \cong D_4 \times (\Z/2\Z)$. 

On the other hand, let $N_1, N_2\cong \overline{\CP^2}$. Define $g_\Phi: N_1 \sqcup N_2 \to N_1 \sqcup N_2$ by
\begin{align*}
	g_\Phi|_{N_1}([X:Y:Z]) = [\overline X: -\overline Y: \overline Z]\in N_2, \\
	g_\Phi|_{N_2}([X:Y:Z]) = [X:Y:-Z]\in N_1.
\end{align*}
Define $g_\Psi, g_{-I_3}: N_1 \sqcup N_2 \to N_1\sqcup N_2$ 
\begin{align*}
g_\Psi([X:Y:Z]) &= \begin{cases}
[\overline X : -\overline Y: \overline Z] \in N_1 & \text{if }[X:Y:Z] \in N_1, \\
[X:Y:-Z] \in N_2 & \text{if }[X:Y:Z] \in N_2,
\end{cases} \\
g_{-I_3}([X:Y:Z]) &= \begin{cases}
[\overline X : \overline Y : \overline Z]\in N_1 &\text{if }[X:Y:Z] \in N_1, \\
[\overline X : \overline Y : \overline Z]\in N_2 & \text{if }[X:Y:Z] \in N_2.
\end{cases}
\end{align*}
The maps $g_\Phi, g_\Psi, g_{-I_3}$ preserve the set $\{q_1, q_2\} \subseteq N_1 \sqcup N_2$ with $q_1 = [0:0:1] \in N_1$ and $q_2 = [0:0:1] \in N_2$. It is straightforward to check that $\langle g_{\Phi}, g_\Psi, g_{-I_3}\rangle \cong D_4 \times (\Z/2\Z)$.

Let $B_k = \{(a,b,c,d) \in \R^4 : a^2 + b^2 + c^2 + d^2 < 1\}$ for $k = 1, 2$. Define smooth embeddings $i_k: B_k \to \CP^2$, $j_k: B_k \to N_k$ for $k = 1, 2$ by
\begin{align*}
i_1(a, b, c, d) &= [1 : a+bi : c+di] \in \CP^2, & j_1(a, b, c, d) &= [c+bi : a-di : 1] \in N_1, \\
i_2(a, b, c, d) &= [a+bi : 1 : c+di] \in \CP^2, & j_2(a, b, c, d) &= [c+bi : a-di : 1] \in N_2.
\end{align*}
The embeddings $i_1, i_2$ are orientation-preserving while $j_1, j_2$ are orientation-reversing. Then we explicitly identify $M_2$ with 
\[
	(\CP^2 - \{p_1, p_2\}) \sqcup (N_1 \sqcup N_2 -\{q_1, q_2\}) / \sim
\]
with $i_k(tu_k) \sim j_k((1-t)u_k)$ for all $t \in (0, 1)$ and $u_k \in \partial B_k$ for $k = 1, 2$. For $c = \Phi, \Psi,$ and $-I_3$, let $f_c \in \Diff^+(M_2)$ be defined
\[
	f_c(x) = \begin{cases}
	h_c(x) & x \in \CP^2 - \{p_1, p_2\} \\
	g_c(x) & x \in (N_1 \sqcup N_2) - \{q_2, q_2\}. 
	\end{cases}
\]
\begin{figure}
\centering
\includegraphics[width=0.9\textwidth]{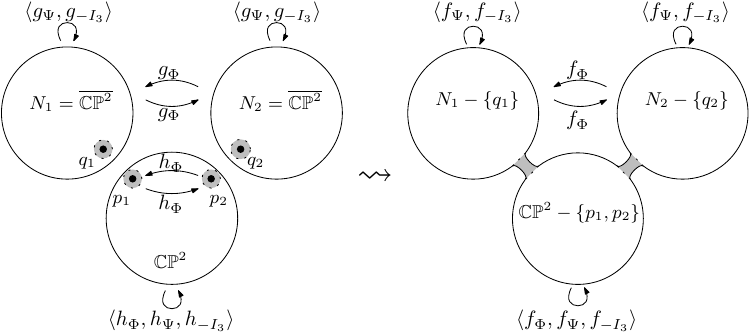}
\caption{The complex equivariant connected sum $(\CP^2 \#2\overline{\CP^2}, G_2)$ constructed in the proof of Proposition \ref{prop:Z2-D4}. Left: The group $G_2$ acts on $\CP^2$ and $2 \overline{\CP^2}$. The subgroup $\langle \Phi^2, \Psi, -I_3\rangle$ fixes the points $p_1$, $p_2$, $q_1$, and $q_2$ and preserves some neighborhoods (in grey) of each point. The group $G_2$ acts on the union of these neighborhoods. Right: A connected sum formed by gluing the neighborhood of $q_i$ to the neighborhood of $p_i$ (in grey) for $i = 1, 2$ in a $G_2$-equivariant way.} \label{fig:cecs-d4-z2}
\end{figure}
See Figure \ref{fig:cecs-d4-z2} for an illustration of the complex equivariant connected sum $(M_2, \langle f_\Phi, f_\Psi, f_{-I_3} \rangle)$. We now show that each $f_c$ is a well-defined diffeomorphism of $M_2$. Let $x = (a,b,c,d) \in \partial B_4$ and $t \in (0, 1)$. Compute for $i_1(tx) = [1 : t(a+bi): t(c+di)] \in \image(i_1) \subseteq \CP^2 - \{p_1, p_2\}$,
\begin{align*}
h_\Phi(i_1(tx))&= i_2(t(-a, -b, c, d)) = j_2((1-t)(-a,-b,c,d)) = g_\Phi(j_1((1-t)x)), \\
h_\Psi(i_1(tx)) &= i_1(t(-a, -b, c, d)) = j_1((1-t)(-a, -b, c, d)) = g_\Psi(j_1((1-t)x)), \\
h_{-I_3}(i_1(tx)) &= i_1(t(a, -b, c, -d)) = j_1((1-t)(a, -b, c, -d)) = g_{-I_3}(j_1((1-t)x)).
\end{align*}
For $i_2(tx) = [t(a+bi) : 1 : t(c+di)] \in \image(i_2) \subseteq \CP^2 - \{p_1, p_2\}$,
\begin{align*}
h_\Phi(i_2(tx)) &= i_1(t(-a, -b, -c, -d)) = j_1((1-t)(-a,-b,-c,-d)) = g_\Phi(j_2((1-t)x)), \\
h_\Psi(i_2(tx)) &= i_2(t(-a, -b, -c, -d) = j_2((1-t)(-a,-b,-c,-d)) = g_\Psi(j_2((1-t)x)), \\
h_{-I_3}(i_2(tx)) &= i_2(t(a, -b, c, -d)) = j_2((1-t)(a,-b,c,-d)) = g_{-I_3}(j_2((1-t)x)).
\end{align*}
We have computed explicitly that $h_c(tx) \sim g_c((1-t)x)$ for all $c = \Phi, \Psi, -I_3$, $x \in \partial B_k$ for $k = 1, 2$ and $t \in (0, 1)$. Therefore, $\langle f_\Phi, f_\Psi, f_{-I_3} \rangle \cong D_4 \times (\Z/2\Z) \leq \Diff^+(M_2)$ is a desired lift of $\langle \Phi, \Psi, -I_3 \rangle \cong D_4 \times (\Z/2\Z) \leq \Mod(M_2)$ with $[f_\Phi] = \Phi$, $[f_\Psi] = \Psi$, and $[f_{-I_3}] = -I_3$ by construction. 

Finally, let $H = \langle \Phi^2, \Psi, -I_3 \rangle \trianglelefteq G_2$. As constructed above, $H$ acts on both $\CP^2$ and $\overline{\CP^2}$ smoothly, and $H$ is the stabilizer of $p_1 \in \CP^2$ and $q_1 \in N_1 \cong \overline{\CP^2}$. Hence $(M_2, \langle f_\Phi, f_\Psi, f_{-I_3} \rangle)$ is an explicit construction of the complex equivariant connected sum $(\CP^2 \# (G_2 \times_H \overline{\CP^2}), G_2)$. 
\end{proof}

\subsubsection{Proof of Theorem \ref{thm:m2} and its corollaries}\label{sec:proof-m2}
The following proposition ties together the lemmas of the previous sections.
\begin{prop}\label{prop:m2}
Let $G \leq \Mod(M_2)$ be a finite subgroup. There exists a lift $\tilde G$ of $G$ to $\Diff^+(M_2)$ under $\pi: \Homeo^+(M_2) \to \Mod(M_2)$ if and only if $G$ is conjugate in $\Mod(M_2)$ to a subgroup of:
\begin{enumerate}
	\item $\left\langle \Phi, \Psi, -I_3  \right\rangle \cong D_4 \times (\Z/2\Z)$, or 
	\item one of the four groups $\langle A, -I_3 \rangle$, $\langle B, -I_3  \rangle$, $\langle AB, -I_3 \rangle$, or $\langle -A, -B \rangle$, each isomorphic to $(\Z/2\Z)^2$.
\end{enumerate}
\end{prop}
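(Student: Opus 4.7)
The plan is to organize the previous realizability and nonrealizability results using the classification of maximal finite subgroups of $\Mod(M_2)$ from Lemma \ref{lem:max-finite-subgroups}. By that lemma, every maximal finite subgroup of $\Mod(M_2) \cong \OO(1,2)(\Z)$ is conjugate to $\langle G_v, -I_3\rangle$ for some $\RRef_v \in \{\RRef_{H-E_1-E_2}, \RRef_{E_2}\}$. My first step is to identify these two maximal subgroups with $G_1$ and $G_2$: the choice $\RRef_v = \RRef_{E_2}$ gives $\langle \RRef_{H-E_1-E_2}, \RRef_{E_1-E_2}, -I_3\rangle$, which coincides with $G_1$ in the $(S_1, S_2, \Sigma)$ basis; the choice $\RRef_v = \RRef_{H-E_1-E_2}$ gives $\langle \RRef_{E_1-E_2}, \RRef_{E_2}, -I_3\rangle$, which one checks directly equals $G_2$ by computing the product $\RRef_{E_2}\RRef_{E_1-E_2} = \Phi$ and noting $\Psi = \RRef_{E_1}$ lies in this group, so together with $-I_3$ they generate a group of order $16$ isomorphic to $D_4 \times \Z/2\Z$. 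Consequently, every finite subgroup of $\Mod(M_2)$ is conjugate into either $G_1$ or $G_2$.

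For the ``if'' direction, Proposition \ref{prop:Z2-D4} realizes $G_2$ itself as a complex equivariant connected sum, so every subgroup of $G_2$ lifts. Similarly, Proposition \ref{prop:ex} realizes each of the four listed $(\Z/2\Z)^2$ subgroups of $G_1$, so every subgroup of any of these lifts to $\Diff^+(M_2)$.

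For the ``only if'' direction, assume $G \leq \Mod(M_2)$ lifts. If $G$ is conjugate into $G_2$ we are done, so assume $G \leq G_1 \cong (\Z/2\Z)^3$. I would enumerate the seven index-$2$ subgroups of $G_1$ via the $\mathbb{F}_2$-subspaces of $G_1$, and split them into two groups. Three of them---namely $\langle A, B\rangle$, $\langle A, -B\rangle$, and $\langle B, -A\rangle = \langle -AB, -A\rangle$---fail to lift by Propositions \ref{prop:non-ex-1} and \ref{prop:non-ex-3}. The remaining four are precisely $\langle A, -I_3\rangle$, $\langle B, -I_3\rangle = \langle B, -B\rangle$, $\langle AB, -I_3\rangle$, and $\langle -A, -B\rangle = \langle AB, -B\rangle$, which lift by Proposition \ref{prop:ex}. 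Since $G_1$ itself contains the non-lifting subgroup $\langle A, B\rangle$, $G_1$ does not lift. A direct check shows that each of the seven order-$2$ cyclic subgroups of $G_1$ is contained in at least one of the four lifting $(\Z/2\Z)^2$ subgroups (for instance, $\langle -AB\rangle \leq \langle AB, -I_3\rangle$, $\langle -A\rangle, \langle -B\rangle \leq \langle -A, -B\rangle$, and so on), so any lifting $G \leq G_1$ must in fact be contained in one of the four listed subgroups.

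The substantive mathematical work has already been done in Propositions \ref{prop:non-ex-1}--\ref{prop:Z2-D4}; the only fussy point is matching the two maximal finite subgroups supplied by Lemma \ref{lem:max-finite-subgroups} with $G_1$ and $G_2$ in the paper's chosen bases, after which the remaining argument is a finite enumeration of the $\mathbb{F}_2$-subspace lattice of $G_1$ together with the bookkeeping $\langle B, -B\rangle = \langle B, -I_3\rangle$ and $\langle AB, -B\rangle = \langle -A, -B\rangle$.
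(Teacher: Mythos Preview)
Your approach is essentially identical to the paper's: reduce to subgroups of $G_1$ and $G_2$ via Lemma \ref{lem:max-finite-subgroups}, invoke Proposition \ref{prop:Z2-D4} for $G_2$, and then enumerate the $\mathbb F_2$-subspace lattice of $G_1$ using Propositions \ref{prop:non-ex-1}, \ref{prop:non-ex-3}, and \ref{prop:ex}. The bookkeeping identities $\langle B,-B\rangle=\langle B,-I_3\rangle$ and $\langle AB,-B\rangle=\langle -A,-B\rangle$ are correct, and your check that every cyclic subgroup of $G_1$ sits in one of the four liftable $(\Z/2\Z)^2$'s matches the paper's.

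There is one genuine gap. In the ``only if'' direction you write ``so assume $G \leq G_1$,'' but Lemma \ref{lem:max-finite-subgroups} only gives that $G$ is \emph{conjugate} to some $G' \leq G_1$. Your enumeration argument shows which subgroups $G' \leq G_1$ lift; to apply it you must know that $G'$ lifts, not just that $G$ does. Similarly, in the ``if'' direction you only establish that certain specific subgroups of $G_1$ and $G_2$ lift, not that their $\Mod(M_2)$-conjugates do. Both directions therefore require the following step, which the paper makes explicit: since $\pi|_{\Diff^+(M_2)}:\Diff^+(M_2)\to\Mod(M_2)$ is surjective (Theorem \ref{thm:diffeo-realizable}), any conjugating element $g\in\Mod(M_2)$ is represented by some $\alpha\in\Diff^+(M_2)$, and then $\tilde G\mapsto \alpha\tilde G\alpha^{-1}$ transports lifts along conjugation. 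Without this, the passage between $G$ and its conjugate inside $G_1$ or $G_2$ is unjustified.
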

\begin{proof} 
By Lemma \ref{lem:max-finite-subgroups}, any finite subgroup $G \leq \Mod(M_2)$ is conjugate in $\Mod(M_2)$ to a subgroup of 
\begin{enumerate}
\item $\langle G_{H-E_1-E_2}, -I_3 \rangle = \langle \RRef_{E_1-E_2}, \RRef_{E_2}, -I_3 \rangle = \langle \Phi, \Psi, -I_3 \rangle$ or
\item $\langle G_{E_2}, -I_3 \rangle = \langle \RRef_{H-E_1-E_2}, \RRef_{E_1-E_2}, -I_3 \rangle = \langle A, B, -I_3 \rangle$.
\end{enumerate}
Proposition \ref{prop:Z2-D4} constructs a lift of $\langle \Phi, \Psi, -I_3 \rangle$ to $\Diff^+(M_2)$ explicitly. Suppose $G$ is a subgroup of $\langle A, B, -I_3\rangle \cong (\Z/2\Z)^3$ and $G$ has a lift $\tilde G \leq \Diff^+(M_2)$. Because there are subgroups of $\langle A, B, -I_3 \rangle$ which do not have lifts to $\Diff^+(M_2)$ (Propositions \ref{prop:non-ex-1} and \ref{prop:non-ex-3}), $G$ must be a proper subgroup of $\langle A, B, -I_3 \rangle$. Observe that any two out of the seven nontrivial elements of $\langle A, B, -I_3 \rangle$ determine a subgroup of order $4$ in $\langle A, B, -I_3 \rangle$, and there are three possible sets of generators for each order $4$ subgroup. This gives seven distinct subgroups of order $4$.

Propositions \ref{prop:non-ex-1} and \ref{prop:non-ex-3} account for three subgroups isomorphic to $(\Z/2\Z)^2$ that cannot be realized by diffeomorphisms. Proposition \ref{prop:ex} gives lifts for the four remaining subgroups isomorphic to $(\Z/2\Z)^2$; all of these lifts are realized as complex equivariant connected sums. This accounts for all maximal proper subgroups of $\langle A, B, -I_3\rangle$. On the other hand, all nontrivial subgroups of maximal proper subgroups of $\langle A, B, -I_3 \rangle$ are cyclic of order $2$. Note that all order $2$ elements of $\langle A, B, -I_3 \rangle$ are contained in some subgroup of order $4$, which have lifts to $\Diff^+(M_2)$, listed in Proposition \ref{prop:ex}.

Finally, the map $\pi|_{\Diff^+(M_2)}: \Diff^+(M_2) \to \Mod(M_2)$ is surjective by Theorem \ref{thm:diffeo-realizable}. Therefore, for any finite subgroup $H = gGg^{-1} \subseteq \Mod(M_2)$ with $G$ a finite subgroup of $\langle A, B, -I_3 \rangle$ or $\langle \Phi, \Psi, -I_3 \rangle$, let $\alpha \in \Diff^+(M_2)$ be a representative of $g \in \Mod(M_2)$. Then if a lift $\tilde G$ of $G$ to $\Diff^+(M_2)$ exists then $\tilde H = \alpha \tilde G \alpha^{-1}$ is a lift of $H$ and vice versa. Therefore, a lift $\tilde G$ of $G$ to $\Diff^+(M_2)$ exists if and only if $G$ is conjugate to a subgroup given in the statement of the proposition.
\end{proof}

We can now prove Theorem \ref{thm:m2}.
\begin{proof}[Proof of Theorem \ref{thm:m2}]
All constructions of Proposition \ref{prop:ex} and Proposition \ref{prop:Z2-D4}, which account for all realizable cases of Proposition \ref{prop:m2}, are given as complex equivariant connected sums. These account for all finite subgroups of $\Mod(M_2)$ which lift to $\Diff^+(M_2)$.
\end{proof}

Finally, we deduce the corollaries of Theorem \ref{thm:m2}.
\begin{proof}[Proof of Corollary \ref{cor:finite-order-m2}]
By Lemma \ref{lem:max-finite-subgroups}, any finite order element $c$ of $\Mod(M_2)$ is  conjugate to some element contained in $G_1$ or $G_2$. Compute that all individual finite order elements of $G_1$ and $G_2$ are contained in some subgroup given in Proposition \ref{prop:m2} that lifts to $\Diff^+(M_2)$. Again since $\pi|_{\Diff^+(M_2)}: \Diff^+(M_2) \to \Mod(M_2)$ is surjective, this is enough to conclude that $c$ is represented by an order $n$ diffeomorphism in $\Diff^+(M_2)$.
\end{proof}

\begin{proof}[Proof of Corollary \ref{cor:dehn-twist-m2}]
The mapping class of a Dehn twist about a $(-2)$-sphere in $M_2$ has order $2$ in $\Mod(M_2)$. The corollary is a special case of Corollary \ref{cor:finite-order-m2}.
\end{proof}

\begin{proof}[Proof of Corollary \ref{cor:non-realizable-subgroup}]
By Propositions \ref{prop:non-ex-1} and \ref{prop:non-ex-3}, there exist subgroups $G \leq \Mod(M_2)$ isomorphic to $(\Z/2\Z)^2$ that cannot be realized by diffeomorphisms. By Corollary \ref{cor:finite-order-m2}, all elements $g \in G$ are realized by a complex equivariant connected sum.
\end{proof}

\subsection{Smooth Nielsen realization problem for $M_3$}\label{sec:m3}
In this section we use similar techniques as used in the proof of Theorem \ref{thm:m2} to show that only one of the three maximal finite subgroups of $\Mod(M_3)$ lifts to $\Diff^+(M_3)$.

Throughout this section let
\[
	\sigma(ij) = \RRef_{E_i-E_j} \quad\quad R(k) = \RRef_{E_k}, \quad\quad \psi = \RRef_{H-E_1-E_2-E_3}, \quad\quad c = R(1)R(2)R(3).
\]
For any subgroup $G \leq \Mod(M_3)$ with a lift to $\Diff^+(M_3)$, we denote this lift by $\langle f_g : g \in G \rangle\leq \Diff^+(M_3)$ where $f_g \in \Diff^+(M_3)$ and $[f_g] = g \in \Mod(M_3)$. The following is a reformulation of \cite[Corollary 2.6]{edmonds}.
\begin{lem}[{Edmonds, \cite[Corollary 2.6]{edmonds}}]\label{lem:nonzero-homology-2-dim}
Let $M$ be a simply-connected, closed, oriented $4$-manifold. Let $f: M \to M$ be an orientation-preserving diffeomorphism of prime order $p$. If $\Fix(f)$ has two or more connected components and $S \subseteq \Fix(f)$ is an orientable $2$-dimensional component then $[S] \neq 0 \in H_2(M; \Z)$.
\end{lem}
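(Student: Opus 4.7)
The plan is to argue by contradiction: assume $[S] = 0 \in H_2(M;\Z)$ and derive a contradiction from the existence of a second component $F \subseteq \Fix(f)$ disjoint from $S$. I would combine the previously cited Proposition 2.4 (Edmonds) with an intersection-theoretic argument implemented via equivariant cohomology.

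Set $G = \langle f\rangle \cong \Z/p\Z$ and decompose $\Fix(f) = S \sqcup F \sqcup \cdots$. Since $M$ is closed, simply connected, and oriented, its intersection form $Q_M$ on $H_2(M;\Z)$ is unimodular; hence $[S] = 0$ is equivalent to $Q_M([S], \alpha) = 0$ for every $\alpha \in H_2(M;\Z)$. The strategy is to produce such an $\alpha$ with $Q_M([S],\alpha) \neq 0$, using $F$ as the basepoint of the construction. Concretely, I would choose a small $f$-invariant tubular neighborhood $N(S)$ of $S$ disjoint from $F$; its normal bundle $\nu(S)$ is a rank-$2$ oriented real bundle on which $f$ acts fiberwise by a nontrivial rotation (so $\nu(S)$ carries a $G$-equivariant complex structure, and $G$ acts freely on $\nu(S) \setminus S$). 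Then take a path in $M \setminus \mathrm{int}\, N(S)$ from a point $q \in F$ to a point $x \in \partial N(S)$, symmetrize it over $G$ (which is possible because $q$ is $G$-fixed and $G$ acts linearly on a neighborhood of $q$), and cone the resulting $G$-orbit of boundary points across a fiber of $\nu(S)$ at $x$ to form a closed $2$-cycle $\alpha$ in $M$. By freeness of the $G$-action on the fiber $\nu(S)_x \setminus \{0\} \cong \R^2 \setminus \{0\}$, this cycle meets $S$ transversely in exactly one point (up to sign), yielding $Q_M([S],\alpha) = \pm 1$ and the desired contradiction.

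The main technical obstacle is ensuring that the symmetrized chain is an honest integral cycle (not merely a rational one) with the correct intersection number. I anticipate this being cleaner via Borel's localization theorem: the class $[S] \in H_2(M;\Z/p)$ admits an equivariant lift in $H_2^G(M;\Z/p)$ determined, via localization to $\Fix(f)$, by purely local data on the fixed components. When $\Fix(f)$ has at least two components, one can then argue, using Proposition 2.4 to pin down the ranks of the relevant $\Z[G]$-module summands of $H_2(M;\Z)$, that the forgetful map $H_2^G(M;\Z/p) \to H_2(M;\Z/p)$ sends this equivariant class to something nonzero, so $[S] \neq 0 \pmod p$ and hence $[S] \neq 0$ in $H_2(M;\Z)$. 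This Smith-theoretic step is where the hypotheses $p$ prime and $\pi_1(M) = 1$ are genuinely used, and I expect it to be the heart of the argument.
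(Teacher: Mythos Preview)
The paper's proof is much shorter than what you attempt: it simply invokes Edmonds' Corollary~2.6 as a black box. That corollary says that the $2$-dimensional components of $\Fix(f)$ represent linearly independent classes in $H_2(M;\Z/p\Z)$ when $\Fix(f)$ also has isolated points, and that any $k-1$ of the $k$ components do so when $\Fix(f)$ is purely $2$-dimensional. In either case, once there are at least two components, the given component $S$ has $[S]\neq 0$ in $H_2(M;\Z/p\Z)$; orientability of $S$ then promotes this to $[S]\neq 0$ in $H_2(M;\Z)$. That is the entire argument.

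Your proposal instead tries to reprove the underlying independence, and the geometric half has a genuine gap. The ``symmetrized fan'' of $p$ paths from $q$ to the orbit $G\cdot x$ is a $1$-chain with boundary $\sum_{g} g\cdot x - p\cdot q$; ``coning across a fiber'' does not turn this into a closed $2$-cycle. If you cap with the full normal disk $D^2$ you pick up the entire meridian circle $\partial D^2$ as boundary, not just the $p$ orbit points; if you instead add the $p$ radii to the center, you obtain a $1$-cycle (and only mod $p$ at that), not a $2$-cycle. Either way there is no $2$-class $\alpha$ with $Q_M([S],\alpha)=\pm 1$ coming out of this picture, and indeed the meridian of $S$ need not bound in $M\setminus S$, which is exactly the obstruction you are running into. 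You correctly sense the problem and pivot to Borel localization/Smith theory; that is the right framework and is essentially how Edmonds proves his Corollary~2.6, but what you have written is a plan rather than a proof. For the purposes of this lemma---which the paper presents as a reformulation of Edmonds' result, not a new theorem---citing the corollary and reading off the two cases is all that is needed.
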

\begin{proof}
If $\Fix(f)$ is not purely $2$-dimensional then \cite[Corollary 2.6]{edmonds} says that the $2$-dimensional components represent linearly independent elements of $H_2(M; \Z/p\Z)$. Then because $[S] \neq 0 \in H_2(M;\Z/p\Z)$ and $S$ is an orientable surface, $[S] \neq 0\in H_2(M; \Z)$.

If $\Fix(f)$ is purely $2$-dimensional with $k$-many connected components then \cite[Corollary 2.6]{edmonds} says that any $(k-1)$-many connected components represent linearly independent elements of $H_2(M; \Z/p\Z)$. By the assumption that $k > 1$, this implies that $[S] \neq 0 \in H_2(M; \Z/p\Z)$. Because $S$ is an orientable surface, $[S] \neq 0 \in H_2(M; \Z)$.
\end{proof}

In addition to the homological results used in Section \ref{sec:m2}, a main idea of many arguments in this section is the following: suppose a finite abelian group $G$ acts smoothly on some $4$-manifold $M$ and fix a $G$-invariant metric on $M$. For all $g, h \in G$, the fact that $g, h$ commute implies that $h$ restricts to a diffeomorphism on $\Fix(g)$ and vice versa. If there is a unique isolated fixed point $p \in \Fix(g)$ then $p$ must be fixed by all elements of $G$ since diffeomorphisms of $\Fix(g)$ must send $0$-dimensional components to $0$-dimensional components and so the tangential representation $G \to \SO(T_pM)$ is faithful.
\begin{lem}\label{lem:no-isolated-points-12-23}
If $G = \langle \sigma(12), R(3) \rangle \cong (\Z/2\Z)^2 \leq \Mod(M_3)$ lifts to $\Diff^+(M_3)$ then $\Fix(f_{\sigma(12)}) \cong F_1 \sqcup F_2$ with $F_i \cong \mathbb S^2$ for $i = 1, 2$ and $\Fix(f_{\sigma(12)R(3)}) \cong \RP^2 \sqcup \{p\}$. Moreover, $Q_{M_3}([F_1], [F_1]) + Q_{M_3}([F_2], [F_2]) = 0$. 
\end{lem}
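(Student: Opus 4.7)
The plan is to combine the homological constraints of Proposition~\ref{prop:edmonds} and the Lefschetz fixed-point theorem with the Hirzebruch $G$-signature theorem (Theorem~\ref{thm:g-signature}) to narrow $\Fix(f_{\sigma(12)})$ down to three topological types, and then to use the auxiliary diffeomorphism $f_{R(3)}$ to eliminate two of them.

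First I would decompose $H_2(M_3;\Z)$ as a module over $\Z[\langle \sigma(12)\rangle]$ in the basis $(H, E_1, E_2, E_3)$: the summands $\Z\{H\}$ and $\Z\{E_3\}$ are trivial, while $\Z\{E_1, E_2\}$ is regular since $\sigma(12)$ swaps the generators, so $(t, c) = (2, 0)$. Proposition~\ref{prop:edmonds} then gives $\beta_1(\Fix(f_{\sigma(12)})) = 0$ and $\beta_0 + \beta_2 = 4$ mod $2$, and the Lefschetz formula gives $\chi(\Fix(f_{\sigma(12)})) = 2 + \mathrm{tr}(\sigma(12)|_{H_2}) = 4$. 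Since mod-$2$ first Betti number zero rules out non-orientable components and positive-genus orientable ones, $\Fix(f_{\sigma(12)})$ must be a disjoint union of $k$ isolated points and $n_s$ copies of $S^2$ with $k + 2n_s = 4$, so $(k, n_s) \in \{(4, 0), (2, 1), (0, 2)\}$. The restriction of $Q_{M_3}$ to the $\sigma(12)$-fixed subspace $\R\{H, E_3, E_1 + E_2\}$ is diagonal with entries $(1, -1, -2)$, so $\sigma(M_3/\langle f_{\sigma(12)}\rangle) = -1$ and Theorem~\ref{thm:g-signature} gives $\sum_F Q_{M_3}([F], [F]) = 0$, summed over the $2$-sphere components. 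This will immediately yield the second claim once $(k, n_s) = (0, 2)$ is the only surviving case.

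To rule out $(2, 1)$, note that the unique sphere $F$ is $f_{R(3)}$-invariant, because $f_{R(3)}$ commutes with $f_{\sigma(12)}$ and permutes the $2$-dimensional components of its fixed set. Writing $[F] = aH + bE_3 + c(E_1 + E_2) \in \Z\{H, E_3, E_1 + E_2\}$, the $R(3)$-eigenvalue constraint forces $b = 0$ (the alternative $[F] \in \Z\{E_3\}$ contradicts $Q_{M_3}([F], [F]) = 0$ together with Lemma~\ref{lem:nonzero-homology-2-dim}), and then $Q_{M_3}([F], [F]) = a^2 - 2c^2 = 0$ has no nonzero integer solution, so $[F] = 0$, contradicting Lemma~\ref{lem:nonzero-homology-2-dim}.

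The main obstacle is ruling out $(4, 0)$. Here I would apply the same Edmonds/Lefschetz analysis to $f_{R(3)}$: the decomposition under $R(3)$ has $(t, c) = (3, 1)$, so $\beta_1(\Fix(f_{R(3)})) = 1$ and $\chi(\Fix(f_{R(3)})) = 4$. A mod-$2$ Betti count on the classification of closed surfaces (orientable $\Sigma_g$ contributes even $\beta_1 = 2g$, while non-orientable $N_k$ contributes $k$) forces $\Fix(f_{R(3)})$ to contain exactly one non-orientable component, and that component must be $\RP^2 = N_1$. Since this $\RP^2$ is unique and $f_{\sigma(12)}$ commutes with $f_{R(3)}$, $f_{\sigma(12)}$ preserves it and restricts to a smooth involution on it. But every smooth involution of $\RP^2$ is either the identity (fixed set all of $\RP^2$) or has fixed set $S^1 \sqcup \{\mathrm{pt}\}$, as one sees by lifting to the double cover $S^2$ and using that every finite smooth action on $S^2$ is topologically linear. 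The first case would force a $2$-dimensional submanifold into $\Fix(f_{\sigma(12)})$ and the second a $1$-dimensional submanifold, each contradicting $(k, n_s) = (4, 0)$. Therefore only $(0, 2)$ survives, and the $G$-signature identity established above delivers the homological equation.
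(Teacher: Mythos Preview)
Your argument is correct. The overall architecture matches the paper's: Edmonds' proposition and the $G$-signature theorem pin down the three candidates for $\Fix(f_{\sigma(12)})$, and the case $(k,n_s)=(2,1)$ is eliminated exactly as in the paper via the $R(3)$-eigenspace dichotomy for $[F]$ together with Lemma~\ref{lem:nonzero-homology-2-dim}.

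The one genuine difference is in how you rule out $(k,n_s)=(4,0)$. The paper analyzes $\Fix(f_{\sigma(12)R(3)})\cong \RP^2\sqcup\{p\}$; the unique isolated point $p$ is then fixed by all of $G$, and faithfulness of the tangential representation at $p$ forces $d_pf_{\sigma(12)}\neq -I_4$, so $p$ lies in a $2$-dimensional component of $\Fix(f_{\sigma(12)})$, contradicting $[4]$. You instead analyze $\Fix(f_{R(3)})$, isolate its unique $\RP^2$ component, and use the classification of smooth involutions of $\RP^2$ to see that $f_{\sigma(12)}$ must fix at least a circle inside it, again contradicting $[4]$. Both arguments exploit a component of an auxiliary fixed set that is forced to be $f_{\sigma(12)}$-invariant by uniqueness; the paper's version avoids the extra (though standard) input of linearizing involutions on $\RP^2$, while yours has the minor advantage of working entirely with the two given generators rather than their product. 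Two small remarks: the phrase ``$\beta_0+\beta_2=4$ mod $2$'' should simply read ``$\beta_0+\beta_2=4$'' (these are already mod-$2$ Betti numbers), and your appeal to Lefschetz is redundant once Edmonds gives $\beta_0+\beta_2=4$ and $\beta_1=0$.
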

\begin{proof}
Suppose a lift $\langle f_g : g \in G \rangle \leq \Diff^+(M_3)$ exists. By Remark \ref{rmk:euler-char}, $\chi(\Fix(f_{\sigma(12)})) = 4$ and $\chi(\Fix(f_{\sigma(12)R(3)})) = 2$; therefore, $\Fix(f_{\sigma(12)}) \neq \emptyset$ and $\Fix(f_{\sigma(12)R(3)}) \neq \emptyset$. By Proposition \ref{prop:edmonds}, $\beta_1(\Fix(f_{\sigma(12)})) = 0$ and $\beta_0(\Fix(f_{\sigma(12)})) + \beta_2(\Fix(f_{\sigma(12)})) = 4$. Again by Proposition \ref{prop:edmonds}, $\beta_1(\Fix(f_{\sigma(12)R(3)})) = 1$ and $\beta_0(\Fix(f_{\sigma(12)R(3)})) + \beta_2(\Fix(f_{\sigma(12)R(3)})) = 3$. Combining with the fact that each $\Fix(\varphi)$ is a disjoint union of surfaces and isolated points shows that 
\[
	\Fix(f_{\sigma(12)}) \cong [4], \, \mathbb S^2 \sqcup [2],\, \text{or }\mathbb S^2 \sqcup \mathbb S^2, \quad\quad \Fix(f_{\sigma(12)R(3)}) \cong \RP^2 \sqcup \{p\}.
\]
Because $p \in \Fix(f_{\sigma(12)R(3)})$ is a unique isolated fixed point, $p$ is fixed by $f_g$ for all $g \in G$ and the tangential representation $G \to \SO(T_pM)$ is faithful. If $p$ is an isolated fixed point of $f_h$ for any $h \in G$, then $d(f_h)_p = -I_4$ which is the only element of $\SO(4)$ of order $2$ that does not fix any nonzero vector. Therefore, $p$ is not an isolated fixed point of $f_h$ for any $h \neq \sigma(12) R(3)\in G$.

Because $f_{\sigma(12)}$ acts on $\Fix(f_{\sigma(12)R(3)})$, it must fix the isolated point $p \in \Fix(f_{\sigma(12)R(3)})$. Then $p$ cannot be an isolated fixed point of $f_{\sigma(12)}$, and so $\Fix(f_{\sigma(12)}) \not\cong [4]$.

Suppose $\Fix(f_{\sigma_{(12)}}) \cong \mathbb S^2 \sqcup [2]$. By the $G$-signature theorem (Theorem \ref{thm:g-signature}) for $G = \langle f_{\sigma(12)} \rangle$ with $S \cong \mathbb S^2 \subseteq \Fix(f_{\sigma(12)})$,
\[
	-2 = -2 + Q_{M_3}([S],[S]).
\]
The diffeomorphism $f_{R(3)}$ commutes with $f_{\sigma(12)}$ and so $f_{R(3)}$ restricts to a diffeomorphism of $S$ and $R(3)([S]) = \pm [S]$.

If $R(3)([S]) = -[S]$ then $[S] = aE_3 \in \Z\{E_3\}$ for some $a \in \Z$. Then $Q_{M_3}([S], [S]) = -a^2 = 0$ implies that $a = 0$. By Lemma \ref{lem:nonzero-homology-2-dim}, this is a contradiction.

If $R(3)([S]) = [S]$ then $[S] \in \Z\{H, E_1 + E_2\}$ since $\sigma_{(12)}([S]) = [S]$. Hence $[S] = a H + b(E_1 + E_2)$ for some $a, b\in \Z$, and so $Q_{M_3}([S], [S]) = a^2 - 2b^2 = 0$. The only integral solution to this equation is $(a,b) = (0,0)$, meaning $[S] = 0$. This is again a contradiction by Lemma \ref{lem:nonzero-homology-2-dim}, and so $\Fix(f_{\sigma(12)}) \not\cong \mathbb S^2 \sqcup [2]$.

Finally, the only remaining choice is $\Fix(f_{\sigma(12)}) \cong \mathbb S^2 \sqcup \mathbb S^2$. Denote these $2$-spheres by $F_1$ and $F_2$. The last claim follows because $-2 = -2 + Q_{M_3}([F_1], [F_1]) + Q_{M_3}([F_2], [F_2])$ by the $G$-signature theorem (Theorem \ref{thm:g-signature}) for $G = \langle f_{\sigma(12)} \rangle$.
\end{proof}

\begin{prop}\label{prop:non-realizable-1}
There is no lift of $G = \langle \sigma(12), \sigma(23), R(3) \rangle \leq \Mod(M_3)$ to $\Diff^+(M_3)$.
\end{prop}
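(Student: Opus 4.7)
The plan is to assume for contradiction that a lift $\{f_g : g \in G\} \leq \Diff^+(M_3)$ exists, and to derive a contradiction by analyzing the fixed sets of the three transpositions $\sigma(12), \sigma(13), \sigma(23)$. First I would observe that $G$ contains $\sigma(13) = \sigma(12)\sigma(23)\sigma(12)$ as well as the reflections $R(1) = \sigma(13)R(3)\sigma(13)$ and $R(2) = \sigma(23)R(3)\sigma(23)$, so every subgroup of the form $\langle \sigma(ij), R(k) \rangle$ with $\{i,j,k\} = \{1,2,3\}$ is conjugate in $G$ to $\langle \sigma(12), R(3)\rangle$; hence by Lemma \ref{lem:no-isolated-points-12-23} each $\Fix(f_{\sigma(ij)})$ is a disjoint union of two $2$-spheres whose self-intersection numbers sum to zero. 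I would then focus on $\Fix(f_{\sigma(12)}) = A_1 \sqcup A_2$, noting that the classes $[A_i]$ lie in the $(+1)$-eigenspace $V := \langle H, E_1+E_2, E_3\rangle$ of $\sigma(12)$ on $H_2(M_3;\Z)$.

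Next, I would exploit the central element $c = R(1)R(2)R(3) \in Z(G)$: since $c$ commutes with $\sigma(12)$, it restricts to a diffeomorphism of $A_1 \sqcup A_2$ and permutes the set $\{[A_1],[A_2]\}$ up to sign. The $(\pm 1)$-eigenspaces of $c$ on $V$ are $\langle H\rangle$ and $\langle E_1+E_2, E_3\rangle$. If $c$ fixes both $[A_i]$, then $[A_i] = a_i H$ and the sum $Q_{M_3}([A_1],[A_1]) + Q_{M_3}([A_2],[A_2]) = a_1^2 + a_2^2 = 0$ forces $[A_i] = 0$, contradicting Lemma \ref{lem:nonzero-homology-2-dim}. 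If $c$ negates both, then $Q_{M_3}([A_i],[A_i]) \leq 0$ for each $i$, so the sum-zero condition again forces both classes to vanish. In the remaining cases $c$ either swaps the $A_i$ or fixes one class and negates the other; combining with the commuting actions of $R(3)$ and $R(1)R(2)$, whose respective $(+1)$-eigenspaces on $V$ are $\langle H, E_1+E_2\rangle$ and $\langle H, E_3\rangle$, a sub-case analysis leaves only (up to signs and relabeling) the possibilities $\{[A_1],[A_2]\} = \{aH, aE_3\}$ for some nonzero integer $a$ or $\{a(H+E_3), a(H-E_3)\}$ for some $a \neq 0$.

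To eliminate these surviving configurations, I would combine two ingredients. First, the $3$-cycle $g = \sigma(12)\sigma(23) \in G$ conjugates $\sigma(12)$ to $\sigma(23)$, so $f_g$ induces a diffeomorphism $\Fix(f_{\sigma(12)}) \to \Fix(f_{\sigma(23)})$ equivariant with $g_*$ on homology, and iterating gives an analogous statement for $\Fix(f_{\sigma(13)})$; applying the same case analysis to all three fixed sets and demanding compatibility under the cyclic permutation pins down the allowable configurations further. Second, for each $\{i,j,k\} = \{1,2,3\}$ the containment $\Fix(f_{\sigma(ij)}) \cap \Fix(f_{R(k)}) \subseteq \Fix(f_{\sigma(ij)R(k)}) \cong \RP^2 \sqcup \{p\}$ forces the restriction of $f_{R(k)}$ to each sphere in $\Fix(f_{\sigma(ij)})$ to be an involution of $\mathbb S^2$ with fixed set contained in $\RP^2 \sqcup \{p\}$. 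Since no $\mathbb S^2$ embeds in $\RP^2$, the identity restriction is ruled out, and tracking how the permitted rotations (two isolated fixed points) and reflections (a fixed $\mathbb S^1$) must sit coherently inside the three copies of $\RP^2 \sqcup \{p\}$ as $(i,j,k)$ varies should produce the desired contradiction. The hardest step will be this final compatibility argument, since the surviving classes $\{aH, aE_3\}$ correspond to natural algebro-geometric data (a line in $\CP^2$ together with an exceptional divisor), so the contradiction must come from the combined topological interaction of all six fixed spheres rather than from homological considerations for $\sigma(12)$ alone.
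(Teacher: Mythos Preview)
Your opening reduction is correct: Lemma~\ref{lem:no-isolated-points-12-23} applies to each $\langle\sigma(ij),R(k)\rangle$, and the case analysis using the commuting involutions $c$, $R(3)$, $R(1)R(2)$ on $V=\langle H,E_1+E_2,E_3\rangle$ does reduce to the two homology configurations $\{aH,aE_3\}$ and $\{a(H+E_3),a(H-E_3)\}$ (up to sign and relabeling). The problem is the last step. You admit that ``the contradiction must come from the combined topological interaction of all six fixed spheres,'' and you propose to track how circles and pairs of points sit inside the various copies of $\RP^2\sqcup\{p\}$; but you never carry this out, and it is not at all clear that it terminates. For instance, in the configuration $\{aH,aE_3\}$ the restriction $f_{R(3)}|_{A_1}$ is a rotation with two fixed points and $f_{R(3)}|_{A_2}$ is a reflection or the antipodal map, and all of these are compatible with the containment in $\Fix(f_{\sigma(12)R(3)})\cong\RP^2\sqcup\{p\}$. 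The cyclic permutation by the $3$-cycle is likewise consistent with both surviving configurations. So the tools you list do not obviously close the argument.

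The paper takes a much shorter route that you have essentially set up but not used. Instead of studying how $f_c$ permutes the spheres in $\Fix(f_{\sigma(12)})$, compute $\Fix(f_c)$ directly: Proposition~\ref{prop:edmonds} gives $\beta_1=3$ and $\beta_0+\beta_2=3$, which forces $\Fix(f_c)\cong\#3\RP^2\sqcup\{p\}$. The \emph{unique} isolated point $p$ is then fixed by every element of $G$, the tangential representation $G\to\SO(T_pM_3)$ is faithful, and $d_pf_c=-I_4$. Hence no other order-$2$ element of $G$ has $p$ as an isolated fixed point. Now $p$ lies on some sphere $F_1\subseteq\Fix(f_{\sigma(12)})$, and since $p$ cannot be isolated in $\Fix(f_{\sigma(12)})\cap\Fix(f_{R(3)})$ (else $d_pf_{\sigma(12)R(3)}=-I_4=d_pf_c$, contradicting faithfulness), the restriction $f_{R(3)}|_{F_1}$ fixes a circle through $p$ and is therefore orientation-reversing, giving $[F_1]=aE_3$. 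But $d_pf_c|_{T_pF_1}=-I_2$ is orientation-preserving, so $c_*[F_1]=[F_1]$, while $c_*(aE_3)=-aE_3$; thus $[F_1]=0$, contradicting Lemma~\ref{lem:nonzero-homology-2-dim}. The single observation you are missing is to look at $\Fix(f_c)$ itself rather than at the action of $f_c$ on $\Fix(f_{\sigma(12)})$: this produces a global fixed point and collapses your entire case split in one stroke.
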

\begin{proof}
Suppose a lift $\langle f_g : g \in G \rangle \leq \Diff^+(M_3)$ exists. By Lemma \ref{lem:no-isolated-points-12-23}, $\Fix(f_{\sigma(12)R(3)}) \cong \RP^2 \sqcup \{q\}$ for some isolated fixed point $q \in M_3$ of $f_{\sigma(12)R(3)}$. Note that $c = R(1) R(2) R(3)$ is in the center $Z(G)$ of $G$, so $f_c$ must fix the point $q$. Because $\Fix(f_c) \neq \emptyset$, Proposition \ref{prop:edmonds} shows that $\beta_1(\Fix(f_c)) = 3$ and $\beta_0(\Fix(f_c)) + \beta_2(\Fix(f_c)) = 3$. Combining with the fact that each $\Fix(f_c)$ is a disjoint union of surfaces and isolated points shows that $\Fix(f_c) \cong \#3 \RP^2 \sqcup \{p\}$ for some isolated fixed point $p \in M_3$ of $f_c$. All elements $g \in G$ commute with $c$, and so $f_g$ acts on $\Fix(f_c)$ by diffeomorphism. This shows that $p$ is a fixed point of $f_g$ for all $g \in G$ and the tangential representation $G \to \SO(T_pM_3)$ is faithful. Because $f_c$ has order $2$, its derivative at the isolated fixed point $p$ must be $d(f_c)_p =  -I_4$. Therefore, $p$ cannot be an isolated fixed point of the lift of $f_h$ for any $h \in G$ of order $2$.

If $p$ is an isolated point in $\Fix(f_{\sigma(12)}) \cap \Fix(f_{R(3)})$ then $p$ is an isolated fixed point of $f_{\sigma(12) R(3)}$, which is a contradiction. So $p$ is in a $1$-dimensional component of $\Fix(f_{\sigma(12)}) \cap \Fix(f_{R(3)})$. Let $\Fix(f_{\sigma(12)}) = F_1 \sqcup F_2$ with $F_i \cong \mathbb S^2$ by Lemma \ref{lem:no-isolated-points-12-23}. Because $R(3)$ and $\sigma(12)$ commute in $G$, the diffeomorphism $f_{R(3)}$ acts on $\Fix(f_{\sigma(12)})$. Assuming $p \in F_1$, we must have $R_3([F_1]) = -[F_1]$, i.e. $[F_1] = aE_3$ for some $a \in \Z$, because $f_{R(3)}$ only fixes a $1$-dimensional submanifold of $F_1$. On the other hand, $f_c$ acts by a diffeomorphism on $F_1$ since $f_c(p) = p$ and $c$ commutes with $\sigma(12)$. Therefore $c([F_1]) = c(aE_3) = -[F_1]$. However, $d(f_c)_p$ must act by $-I_2$ on $T_{p} S_1$ because $d(f_c)_p = -I_4$ on $T_{p} M_3$. This means that $f_c$ acts in an orientation-preserving way on $F_1$, i.e. $c([F_1]) = [F_1]$. Therefore, $[F_1] = 0$. This is a contradiction by Lemma \ref{lem:nonzero-homology-2-dim}.
\end{proof}

\begin{prop}\label{prop:non-realizable-2}
There is no lift of $G = \langle \psi, \sigma(12), R(3) \rangle\leq \Mod(M_3)$ to $\Diff^+(M_3)$.
\end{prop}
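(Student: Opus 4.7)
The plan is to follow the template set by Propositions \ref{prop:non-ex-1}, \ref{prop:non-ex-3}, and \ref{prop:non-realizable-1}: identify a single involution $g \in G$ whose realization by an order $2$ diffeomorphism is already obstructed by the mod $2$ Betti numbers of its fixed set via Proposition \ref{prop:edmonds}. Concretely, I take
\[
	g = \sigma(12) \cdot \RRef_{H - E_1 - E_2} \in G,
\]
using that $\RRef_{H - E_1 - E_2} = \psi R(3) \psi \in G$ and that $\sigma(12)$ commutes with $\RRef_{H - E_1 - E_2}$ (because $\sigma(12)$ fixes the vector $H - E_1 - E_2$), so $g$ is an involution.

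Next, I would pin down the $\Z[\Z/2\Z]$-module structure of $H_2(M_3;\Z)$ under $g$. A short matrix computation gives the integer eigenspaces
\[
	L_+ = \Z\{2H - E_1 - E_2,\, E_3\}, \qquad L_- = \Z\{H - 2E_1,\, E_1 - E_2\},
\]
each of rank $2$, and a single $4 \times 4$ determinant check shows that $L_+ \oplus L_-$ has index $2$ in $H_2(M_3;\Z)$. Since indecomposable $\Z[\Z/2\Z]$-lattices come only in the three types (trivial, cyclotomic, and regular $\Z[\Z/2\Z]$), and the sum of the integer eigenspaces sits inside the ambient lattice with index exactly $2^r$, where $r$ is the number of regular summands, the decomposition is forced to be $1 \cdot \text{trivial} + 1 \cdot \text{cyclotomic} + 1 \cdot \text{regular}$. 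In the notation of Proposition \ref{prop:edmonds}, this gives $t = 1$ and $c = 1$.

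Finally, suppose for contradiction that a lift $\tilde G \leq \Diff^+(M_3)$ exists, and let $f_g \in \tilde G$ be the order $2$ realization of $g$. The trace of $g$ on $H_2(M_3;\Z)$ vanishes, so the Lefschetz number of $f_g$ equals $2$ and $\Fix(f_g) \neq \emptyset$; Proposition \ref{prop:edmonds} then yields $\beta_1(\Fix(f_g)) = 1$. On the other hand, the $2$-dimensional components of the fixed set of an orientation-preserving smooth involution on an oriented $4$-manifold are orientable surfaces, and isolated fixed points contribute nothing to $\beta_1$; hence $\beta_1(\Fix(f_g))$ equals the even number $\sum_i 2 g_i$, a contradiction. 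The main obstacle, beyond spotting the useful involution $g$, is the integer-lattice bookkeeping that pins down the existence of exactly one regular summand, since that is what produces the odd cyclotomic count $c = 1$ driving the parity contradiction.
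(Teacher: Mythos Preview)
Your argument breaks at the final parity step. The assertion that ``the $2$-dimensional components of the fixed set of an orientation-preserving smooth involution on an oriented $4$-manifold are orientable surfaces'' is false. For an involution the normal representation along a fixed surface is $-I_2$, which carries no complex structure, so the normal bundle---and hence the fixed surface---need not be orientable. The paper itself exhibits several such non-orientable fixed loci: $\Fix(f_{\sigma(12)R(3)}) \cong \RP^2 \sqcup \{p\}$ in Lemma~\ref{lem:no-isolated-points-12-23}, $\Fix(f_c) \cong \#3\RP^2 \sqcup \{p\}$ in Proposition~\ref{prop:non-realizable-1}, and $\Fix(f) \cong \RP^2$ in the proof of Theorem~\ref{thm:complex-nielsen}. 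In each case $\beta_1$ is odd. So the implication ``$\beta_1(\Fix(f_g))$ odd $\Rightarrow$ contradiction'' simply does not hold.

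In fact your chosen involution $g = \sigma(12)\cdot\RRef_{H-E_1-E_2}$ \emph{is} realizable by an order~$2$ diffeomorphism of $M_3$: in the basis $(S_1,S_2,\Sigma,E_3)$ it is $AB \oplus \mathrm{id}$, and one can build a complex equivariant connected sum $(\CP^1\times\CP^1)\#\overline{\CP^2}\#\overline{\CP^2}$ using the swap on the first factor, conjugation on the second, and $[X:Y:Z]\mapsto[-X:Y:Z]$ on the third, whose fixed set is $\RP^2 \sqcup \{p\}$ with $\beta_1 = 1$. Hence no obstruction to realizing $g$ alone exists, and your single-element strategy cannot succeed for this $g$. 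The paper's proof instead exploits the interaction of \emph{several} commuting elements: it pins down $\Fix(f_{\sigma(12)}) \cong \mathbb S^2 \sqcup \mathbb S^2$ via Lemma~\ref{lem:no-isolated-points-12-23}, and then shows that the simultaneous constraints imposed by $R(3)$ and $\psi$ on the homology classes $[F_1],[F_2]$ force one of them to vanish, contradicting Lemma~\ref{lem:nonzero-homology-2-dim}.
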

\begin{proof}
Suppose there is such a lift $\langle f_g: g \in G \rangle$ to $\Diff^+(M_3)$. Observe that $\sigma(12) \in Z(G)$ and $\Fix(f_{\sigma(12)}) = F_1 \sqcup F_2$ with $F_i \cong \mathbb S^2$ and $Q_{M_3}([F_1], [F_1]) + Q_{M_3}([F_2], [F_2]) = 0$ by Lemma \ref{lem:no-isolated-points-12-23}. Lemma \ref{lem:no-isolated-points-12-23} shows that $\Fix(f_{\sigma(12)R(3)}) \cong \RP^2 \sqcup \{p\}$. Since $f_{\sigma(12)}$ and $f_{R(3)}$ act on $\Fix(f_{\sigma(12)R(3)})$, they must fix the unique isolated point $p$. Because $f_{\sigma(12)}$ and $f_{R(3)}$ have a common fixed point, $f_{R(3)}$ preserves each $F_i$. Moreover, $p$ must be an isolated point in $\Fix(f_{\sigma(12)}) \cap \Fix(f_{R(3)})$, since otherwise $p$ would not be an isolated fixed point of $f_{\sigma(12)R(3)}$. If $F_1$ is the component of $\Fix(f_{\sigma(12)})$ containing $p$ then $d(f_{R(3)})_p|_{T_pS_1} = -I_2$, i.e. $f_{R(3)}$ acts by an orientation-preserving diffeomorphism on $F_1$ and $R(3)([F_1]) = [F_1]$. Because $\sigma(12)([F_1]) = [F_1]$, we know that $[F_1] \in \Z\{H,E_1 + E_2\}$.

Next, note that $\psi$ and $\sigma(12)$ commute and consider the action of $f_{\psi}$ on $\Fix(\sigma(12)) = F_1 \sqcup F_2$. With respect to the $\Z$-basis $(H-E_1, H-E_2, H-E_1-E_2, E_3)$ of $H_2(M_3; \Z)$,
\[
	\psi = \begin{pmatrix}
	1 & 0 & 0 & 0 \\
	0 & 1 & 0 & 0 \\
	0 & 0 & 0 & 1 \\
	0 & 0 & 1 & 0
	\end{pmatrix}.
\]
\begin{enumerate}
	\item Suppose $\psi([F_1]) = \pm [F_2]$. Write $[F_1] = aH + b(E_1 + E_2)$ for some $a, b\in \Z$. Then 
	\[
		Q_{M_3}([F_1], [F_1]) + Q_{M_3}([F_2],[F_2]) = 2(a^2 - 2b^2) = 0
	\]
	since 
	\[
		Q_{M_3}([F_2],[F_2]) = Q_{M_3}(\psi([F_1]), \psi([F_1]))= Q_{M_3}([F_1], [F_1]) = a^2 - 2b^2.
	\]
	The only integral solution $(a,b) = (0,0)$, a contradiction by Lemma \ref{lem:nonzero-homology-2-dim}.

	\item If $\psi([F_1]) = -[F_1]$ then $[F_1] \in \Z\{H - E_1 - E_2 - E_3\}$. Compute that 
	\[
		\Z\{H, E_1 + E_2\} \cap \Z\{H - E_1 - E_2 - E_3\} = 0,
	\] 
	which implies that $[F_1] = 0$. This is a contradiction again by Lemma \ref{lem:nonzero-homology-2-dim}.

	\item If $\psi([F_1]) = [F_1]$ then $[F_1] \in \Z\{H-E_1-E_2+E_3, H-E_1, H-E_2\}$. Compute that 
	\[
		\Z\{H-E_1-E_2+E_3, H-E_1, H-E_2\} \cap \Z\{H, E_1 + E_2\} = \Z\{2H - E_1 - E_2\}.
	\]
	So $[F_1] = c(2H - E_1 - E_2)$ for some $c \in \Z$ and $Q_{M_3}([F_1], [F_1]) = 2c^2$. 

	Next, note that $R(3)([F_2]) = \pm [F_2]$ because $R(3)([F_1]) = [F_1]$ and $f_{R(3)}$ acts on $\Fix(f_{\sigma(12)})$. Because $\sigma(12)([F_2]) = [F_2]$, we know that $[F_2] \in \Z\{H, E_1+E_2, E_3\}$.
	\begin{enumerate}
		\item If $R(3)([F_2]) = -[F_2]$ then $[F_2] \in \Z\{E_3\}$. So $Q_{M_3}([F_2], [F_2]) = -d^2$ for some $d \in \Z$. Then
		\[
			Q_{M_3}([F_1], [F_1]) + Q_{M_3}([F_2],[F_2]) = 2c^2 - d^2 = 0
		\]
		has only the integral solution $(c,d) = (0,0)$, a contradiction by Lemma \ref{lem:nonzero-homology-2-dim}.

		\item If $R(3)([F_2]) = [F_2]$ then $[F_2] \in \Z\{H, E_1+ E_2\}$. This means that $\psi([F_2]) \neq -[F_2]$. Compute that 
		\[
			\Z\{H, E_1+ E_2\} \cap \Z\{H-E_1-E_2+E_3, H-E_1, H - E_2\} = \Z\{2H - E_1-E_2\}.
		\]
		This shows that $[F_2] =d(2H - E_1 - E_2)$ for some $d \in \Z$, i.e. $Q_{M_3}([F_2], [F_2]) = 2d^2$. Finally, compute
		\[
			Q_{M_3}([F_1], [F_1]) + Q_{M_3}([F_2],[F_2]) = 2c^2 + 2d^2 = 0
		\]
		only has the integral solution $(c,d) = (0,0)$, a contradiction of Lemma \ref{lem:nonzero-homology-2-dim}.\qedhere
	\end{enumerate}
\end{enumerate}
\end{proof}

It remains to show one realizability result before turning to the proof of the main theorem of this section.
\begin{cor}[{\cite[Theorem 8.4.2]{dolgachev}}]\label{cor:w3}
The group $G= \langle \psi ,\sigma(12), \sigma(23), -I_4 \rangle$ is realized by biholomorphisms and anti-biholomorphisms of $X = \Bl_P\CP^2$, where $P = \{[1:0:0], [0:1:0], [0:0:1]\}$.
\end{cor}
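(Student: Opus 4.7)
The plan is to exhibit four explicit (anti-)biholomorphic involutions of $X = \Bl_P\CP^2$ whose induced actions on $H_2(X;\Z)$ realize $\psi, \sigma(12), \sigma(23),$ and $-I_4$ respectively, and then check that the subgroup they generate in $\Diff^+(X)$ is a lift of $G$ of the correct order. Labeling the three points of $P$ as $p_1 = [1:0:0]$, $p_2 = [0:1:0]$, $p_3 = [0:0:1]$, the four maps I would take are: the standard Cremona transformation $\kappa: [X:Y:Z] \mapsto [YZ:XZ:XY]$, which is only birational on $\CP^2$ but becomes a biholomorphic involution of $X$ because its indeterminacy locus on $\CP^2$ is exactly $P$; the coordinate transpositions $\tau_{12}: [X:Y:Z] \mapsto [Y:X:Z]$ and $\tau_{23}: [X:Y:Z] \mapsto [X:Z:Y]$, biholomorphisms of $\CP^2$ permuting $P$ that lift canonically to biholomorphisms of $X$; and complex conjugation $\gamma: [X:Y:Z] \mapsto [\overline X:\overline Y:\overline Z]$, an anti-biholomorphic involution of $\CP^2$ fixing $P$ pointwise and so lifting to an anti-biholomorphic involution of $X$.

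The next step is to verify the induced actions on $H_2(X;\Z) = \Z\langle H, E_1, E_2, E_3\rangle$ match the prescribed generators. Since $\kappa$ is defined by a linear system of conics through $P$, the pullback of a generic line is a conic through $p_1,p_2,p_3$, giving $\kappa_\ast(H) = 2H - E_1 - E_2 - E_3$; and each exceptional divisor $E_k$ is sent to the strict transform of the line through the other two $p_i,p_j$, giving $\kappa_\ast(E_k) = H - E_i - E_j$. A direct comparison with the reflection formula $\RRef_v(w) = w + Q_{M_3}(v,w)\cdot v$ for $v = H-E_1-E_2-E_3$ (noting $Q_{M_3}(v,v) = -2$) shows $\kappa_\ast = \psi$. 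The transpositions $\tau_{12}, \tau_{23}$ visibly induce $\sigma(12)$ and $\sigma(23)$ while fixing $H$. For $\gamma$, the critical observation is that although $\gamma$ preserves the orientation of the ambient real $4$-manifold (because it is anti-holomorphic in complex dimension $2$), its restriction to any complex curve $C \subset X$ is anti-holomorphic and thus orientation-reversing on $C$; hence $\gamma_\ast[C] = -[\gamma(C)]$ as oriented $2$-cycles, which applied to lines and to each exceptional divisor yields $\gamma_\ast = -I_4$.

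Finally, setting $\tilde G := \langle \kappa, \tau_{12}, \tau_{23}, \gamma\rangle$, a short calculation shows that $\gamma$ commutes with each of $\kappa, \tau_{12}, \tau_{23}$ (they are all given by polynomials with real coefficients), and that $\kappa$ commutes with each $\tau_{ij}$; so $\tilde G = \langle \kappa, \tau_{12}, \tau_{23}\rangle \times \langle \gamma\rangle$. The first factor is the finite Weyl subgroup $S_3 \times \Z/2\Z$ of order $12$ appearing in the standard decomposition $\Aut(X) \cong (\C^\ast)^2 \rtimes (S_3 \times \Z/2\Z)$ for a degree $6$ del Pezzo surface (Dolgachev), so $|\tilde G| = 24$. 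Since $\pi(\tilde G) = G$ and $|G| = 24$ (the analogous decomposition $G = \langle \psi, \sigma(12), \sigma(23)\rangle \times \langle -I_4\rangle$ with the first factor $\cong S_3 \times \Z/2\Z$ holds on the lattice side by the same commutation relations), $\pi|_{\tilde G}$ is an isomorphism onto $G$, realizing $G$ by biholomorphisms and anti-biholomorphisms of $X$. The main obstacle is purely bookkeeping: checking that $\kappa$ extends to a genuine biholomorphism on the blowup, computing $\kappa_\ast$ and $\gamma_\ast$ with the correct signs, and justifying that $\langle \kappa, \tau_{12}, \tau_{23}\rangle$ really has order $12$ rather than some larger finite extension inside $\Aut(X)$, which is where the cited structure of the automorphism group of a degree $6$ del Pezzo surface enters.
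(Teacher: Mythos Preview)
Your proposal is correct and follows essentially the same approach as the paper: both use the standard Cremona involution, the two coordinate transpositions, and complex conjugation on $\CP^2$, lift them to $X$, check the induced action on $H_2$, verify the commutation relations, and invoke Dolgachev's description of $\Aut(X)$ for the degree $6$ del Pezzo to confirm the group has the expected order. Your write-up is in fact slightly more explicit than the paper's in computing $\kappa_\ast$ and justifying $\gamma_\ast = -I_4$.
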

\begin{proof}
By \cite[Theorem 8.4.2]{dolgachev}, the algebraic automorphism group is $\Aut(X) \cong (\C^\vee)^2 \rtimes (S_3 \times S_2)$, where the factor $S_3 \times S_2$ is a lift of $\langle \psi \rangle \times \langle \sigma(12), \sigma(23) \rangle$. More explicitly, \cite[p. 388]{dolgachev} gives the lift which we describe below.

Consider the maps $h_{\psi}, h_{\sigma(ij)}: \CP^2 \dashrightarrow \CP^2$ for $(ij) = (12), (23)$ given by
\begin{align*}
h_\psi: [X:Y:Z] &\mapsto [YZ: XZ:XY], \\
h_{\sigma(12)}: [X:Y:Z] &\mapsto [Y:X:Z], \\
h_{\sigma(23)}: [X:Y:Z] &\mapsto [X:Z:Y].
\end{align*}
Note that $P$ is the set the basepoints of the rational map $h_{\psi}$, and $h_{\psi}$ induces an automorphism $f_{\psi}$ of $X$. The maps $h_{\sigma(12)}$ and $h_{\sigma(23)}$ permute the points in $P$ and so induce automorphisms $f_{\sigma(12)}$ and $f_{\sigma(23)}$ respectively of $X$. Note that $h_\psi \circ h_{\sigma(ij)} = h_{\sigma(ij)} \circ h_\psi$ for each $(ij)$ and $\langle h_{\sigma(12)}, h_{\sigma(23)} \rangle \cong S_3$. Hence $\langle f_\psi, f_{\sigma(12)}, f_{\sigma(23)} \rangle \cong \Z/2\Z \times S_3$. Moreover, compute that $[f_\psi] = \RRef_{H - E_1 - E_2 - E_3}$ and $[f_{\sigma(ij)}] = \RRef_{E_i - E_j}$ for $(ij) = (12), (23)$. 

On the other hand, consider the map $h_{-I_4}: \CP^2 \to \CP^2$ given by $h_{-I_4}: [X:Y:Z] \mapsto [\overline X: \overline Y: \overline Z]$. Then $h_{-I_4}$ fixes each point in $P$ and so induces an order $2$ diffeomorphism $f_{-I_4}$ on $X$ with $[f_{-I_4}] = -I_4$. Moreover, $h_{-I_4}$ commutes with each $h_\psi$, $h_{\sigma(ij)}$. Therefore, $\langle f_g: g \in G \rangle \leq \Diff^+(M_3)$ is a lift of $G$ to $\Diff^+(M_3)$. 
\end{proof}

We are ready to prove the main theorem of this section, Theorem \ref{thm:m3}.
\begin{proof}[Proof of Theorem \ref{thm:m3}]
By Lemma \ref{lem:max-finite-subgroups}, the maximal finite subgroups of $\Mod(M_3)$ are conjugate in $\Mod(M_3)$ to
\begin{enumerate}
	\item $\langle G_{H-E_1-E_2-E_3}, -I_4 \rangle = \langle \RRef_{E_1-E_2}, \RRef_{E_2-E_3}, \RRef_{E_3}, -I_4 \rangle = \langle \sigma(12),\sigma(23), R(3), -I_4 \rangle$,
	\item $\langle G_{E_2-E_3}, -I_4 \rangle = \langle \RRef_{H-E_1-E_2-E_3}, \RRef_{E_1-E_2}, \RRef_{E_3}, -I_4 \rangle = \langle \psi, \sigma(12), R(3), -I_4 \rangle$, or
	\item $\langle G_{E_3}, -I_4 \rangle = \langle \RRef_{H-E_1-E_2-E_3}, \RRef_{E_1-E_2}, \RRef_{E_2-E_3}, -I_4 \rangle = \langle \psi, \sigma(12), \sigma(23), -I_4 \rangle$. 
\end{enumerate}
Propositions \ref{prop:non-realizable-1} and \ref{prop:non-realizable-2} show that $\langle G_{H-E_1-E_2-E_3}, -I_4 \rangle$ and $\langle G_{E_2-E_3}, -I_4 \rangle$ respectively do not lift to $\Diff^+(M_3)$. Corollary \ref{cor:w3} shows that $\langle G_{E_3}, -I_4 \rangle$ lifts to $\Aut(S) \times \langle \tau \rangle$ as described in the statement of the theorem. The map $\pi|_{\Diff^+(M_3)}: \Diff^+(M_3) \to \Mod(M_3)$ is surjective by Theorem \ref{thm:diffeo-realizable}. For any maximal finite subgroup $H = gGg^{-1}$ with $G$ one of the maximal finite subgroups listed above, let $\alpha \in \Diff^+(M_3)$ be a representative of $g \in G$. Then if a lift $\tilde G$ of $G$ to $\Diff^+(M_3)$ exists then $\tilde H = \alpha \tilde G \alpha^{-1}$ is a lift of $H$ and vice versa. Therefore, a lift $\tilde G$ of $G$ to $\Diff^+(M_3)$ exists if and only if $G$ is conjugate to $\langle G_{E_3}, -I_4 \rangle$.
\end{proof}

\subsection{Complex Nielsen realization problem for rational manifolds}\label{sec:complex-automorphisms}
In this section we show that the complex Nielsen realization problem differs from the smooth Nielsen realization problem for any $M = M_*$ or $M_n$ for all $n \geq 0$, which we call \emph{rational manifolds}. We also show that for all $n \geq 1$, there exist finite order mapping classes $c \in \Mod(M_n)$ such that there exists no lift of $c$ to a biholomorphic or anti-biholomorphic map in $\Diff^+(M_n)$ for any complex structure of $M_n$. This justifies why we consider complex equivariant connected sums rather than biholomorphisms and anti-biholomorphisms in solving the Nielsen realization problem for del Pezzo manifolds. 

A result of Friedman--Qin (\cite[Corollary 0.2]{friedman--qin}) says that if $X$ is a complex surface diffeomorphic to a rational surface then $X$ is a rational surface. Because any blowup of $\CP^2$ at finitely many points and $\CP^1 \times \CP^1$ are rational surfaces, any complex structure of $M = M_*$ or $M_n$ for $n \geq 0$ turns $M$ into a rational surface. By \cite[Theorem 3.4.6]{gompf--stipsicz}, the complex surface $M$ is geometrically ruled if $M = M_*$ or $M_n$ with $n \geq 1$ and biholomorphic to $\CP^2$ if $M = M_0$. Simply-connected, minimal, geometrically ruled surfaces are isomorphic to a Hirzebruch surface $\mathbb F_m$ for some $m \geq 0$ and $m \neq 1$ by (\cite[Theorem 3.4.8]{gompf--stipsicz}). This implies that the complex surface $M_n$ is a blowup of $\CP^2$ at $n$ points or a blowup of a Hirzebruch surface $\mathbb F_m$ at $(n-1)$ points.

The following lemma gives some examples of restrictions on the possible fixed sets of biholomorphisms and anti-biholomorphisms.
\begin{lem}\label{lem:complex-aut-obstruction}
The fixed set of a biholomorphism of finite order is orientable. The fixed set of an anti-biholomorphic involution has no isolated points.
\end{lem}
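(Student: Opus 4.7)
Both statements are local at a fixed point, so the plan is to linearize in suitable coordinates and read off the structure of the fixed set from the differential. The main tool is the Bochner--Cartan linearization theorem: for a holomorphic action of a compact (here, finite) group on a complex manifold fixing a point $p$, there exist local holomorphic coordinates near $p$ in which the action is linear. The same averaging argument applies to finite groups acting by a mixture of biholomorphisms and anti-biholomorphisms, producing local real-analytic coordinates in which the action is $\R$-linear and each individual element is either $\C$-linear or $\C$-antilinear in these coordinates.

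For the first claim, let $f$ be a biholomorphism and $p \in \Fix(f)$. Since all our applications involve finite-order $f$, Bochner linearization yields holomorphic coordinates $(z_1, \dots, z_n)$ near $p$ in which $f$ acts as the $\C$-linear map $df_p$. The fixed set of a $\C$-linear automorphism of $\C^n$ is a $\C$-linear subspace, hence a complex submanifold near $p$. Complex submanifolds carry a canonical orientation coming from their complex structure, so $\Fix(f)$ is orientable.

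For the second claim, let $f$ be an anti-biholomorphic involution and $p \in \Fix(f)$. The differential $df_p$ is a $\C$-antilinear involution of $T_pX \cong \C^n$. Up to $\C$-linear change of basis, any such involution is complex conjugation $(z_1, \dots, z_n) \mapsto (\overline{z_1}, \dots, \overline{z_n})$, whose fixed set is the totally real subspace $\R^n \subseteq \C^n$ of real dimension $n$. Applying the equivariant linearization to the finite group $\langle f \rangle$ acting by anti-holomorphic involution, we obtain local real-analytic coordinates near $p$ in which $f$ is exactly complex conjugation. Thus $\Fix(f)$ near $p$ is diffeomorphic to an open neighborhood of $0$ in $\R^n$, which has positive dimension since $\dim_\C X \geq 1$; in particular $p$ is not an isolated fixed point.

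The only nontrivial ingredient is the linearization step, which is standard and in the finite-order/involution setting used here follows directly from averaging a local chart by the action. No global hypothesis on $X$ beyond having a complex structure is needed, and both statements then follow from simple linear algebra about $\C$-linear and $\C$-antilinear involutions of $\C^n$.
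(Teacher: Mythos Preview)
Your argument is correct, but it takes a heavier route than the paper. You invoke Bochner--Cartan linearization (or its real-analytic/smooth analogue via averaging or the exponential map) to reduce both statements to linear algebra in a local model. The paper instead argues purely at the tangent level without any linearization: for a biholomorphism $\Phi$, the almost complex structure $J$ commutes with $d_p\Phi$, hence preserves the fixed subspace $T_p\Fix(\Phi)$, so $J$ restricts to an almost complex structure on $\Fix(\Phi)$, which is therefore orientable. For an anti-biholomorphic involution $\Phi$, at an isolated fixed point one would have $d_p\Phi = -I$, which lies in the center of $\GL(T_pM)$; but the anti-holomorphy relation $d_p\Phi \circ J = -J \circ d_p\Phi$ forces $d_p\Phi$ to anticommute with $J$, and $-I$ does not. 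Your approach costs you the linearization step (and a small appeal to the classification of $\C$-antilinear involutions of $\C^n$), but in exchange it tells you more: the fixed set of the anti-holomorphic involution is locally an $\R^n$ inside $\C^n$, not merely positive-dimensional. Note also that your restriction to finite-order $f$ in the first claim is honest but not actually needed; the paper's tangential argument applies to any biholomorphism once one knows $\Fix(\Phi)$ is a submanifold, and in any case all uses in the paper are finite-order.
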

\begin{proof}
Let $\Phi \in \Diff^+(M)$ and consider an almost complex structure $J$ on $M$. Note that $\Fix(\Phi)$ is a disjoint union of surfaces and isolated points.
\begin{enumerate}
	\item If $\Phi\in \Diff^+(M)$ is biholomorphic then the linear operators $d\Phi_p$ and $J$ on $T_pM$ commute for all $p \in \Fix(\Phi)$. Because $T_p(\Fix(\Phi))$ is the fixed subspace of $T_pM$ under $d\Phi_p$, the space $T_p(\Fix(\Phi))$ is preserved by $J$. Therefore, $\Fix(\Phi)$ is orientable because $J|_{T\Fix(\Phi)}$ is an almost complex structure on $\Fix(\Phi)$. 

	\item If $\Phi\in \Diff^+(M)$ is anti-biholomorphic then $d\Phi_p \circ J = -J \circ d\Phi_p$ for all $p \in \Fix(\Phi)$, meaning that $d\Phi_p$ is not in the center of $\GL(T_pM)$. On the other hand, observe that if $\Phi$ is an involution with an isolated fixed point $p \in M$, the differential $d\Phi_p$ acts by negation on $T_pM$, but the negation map $-I_4$ is in the center of $\GL(T_pM)$. \qedhere
\end{enumerate}
\end{proof}

In the following proposition we apply Lemma \ref{lem:complex-aut-obstruction} and the results from Section \ref{sec:finite-gp} about fixed sets of smooth actions of finite groups on $4$-manifolds to derive some contradictions.
\begin{prop}\label{prop:bihol-antibihol}
Fix $n \geq 1$. Let $c \in \Mod(M_n)$ be
\[
	c = \begin{cases}
	\RRef_H & \text{ if }n = 1 \\
	\RRef_{E_1} \RRef_{E_2} & \text{ if }n = 2 \\
	\RRef_H \RRef_{E_1-E_2} & \text{ if }n = 3 \\
	\RRef_H\prod_{k=1}^{n-1} \RRef_{E_k} & \text{ if }n \geq 4.
	\end{cases}
\]
The mapping class $c \in \Mod(M_n)$ is not realizable by a biholomorphim or an anti-biholomorphim of order $2$ of any complex structure of $M_n$.
\end{prop}
\begin{proof}
Suppose $f \in \Diff^+(M_n)$ is of order $2$ with $[f] = c$. By Remark \ref{rmk:euler-char}, $\chi(\Fix(f)) = 3-n$ if $n \neq 3$ and $\chi(\Fix(f)) = 2$ if $n = 3$; therefore, $\Fix(f) \neq \emptyset$ for all $n \geq 1$. By Proposition \ref{prop:edmonds}, the fixed set $\Fix(f)$ must satisfy 
\[
	\beta_1(\Fix(f)) = n, \quad \beta_0(\Fix(f)) + \beta_2(\Fix(f)) = 3
\]
if $n \neq 3$ and $\beta_1(\Fix(f)) = 1$ and $\beta_0(\Fix(f)) + \beta_2(\Fix(f)) = 3$ if $n = 3$.
Because $\Fix(f)$ is a disjoint union of surfaces and isolated points, $\Fix(f) \cong S \sqcup \{p\}$, where $S$ is a connected surface and $p$ is an isolated fixed point of $f$. Lemma \ref{lem:complex-aut-obstruction} implies that $f$ cannot be anti-biholomorphic because it fixes an isolated point. We now prove that $f$ cannot be biholomorphic.

If $n \equiv 1 \pmod 2$ then $\beta_1(S) \equiv 1 \pmod 2$ and so $S$ is non-orientable. Suppose that $n\equiv 0 \pmod 2$ and that $S$ is orientable. The fixed subspace $H_2(M_n; \Z)$ by $c$ is $\Z\{E_n\}$ if $n \neq 2$ and $\Z\{H\}$ if $n = 2$. By the $G$-signature theorem (Theorem \ref{thm:g-signature}),
\[
	(1-n) + Q_{M_n}([S], [S]) = \begin{cases}
	-2 & \text{if }n \neq 2, \\
	2 & \text{if }n = 2.
	\end{cases}
\]
\begin{enumerate}
	\item If $n \neq 2$ then $Q_{M_n}([S], [S]) = n - 3$. On the other hand, $c([S]) = [S]$ and so $[S] = aE_n$ for some $a \in \Z$. This implies that $Q_{M_n}([S], [S]) = -a^2 = n-3$. However, $Q_{M_n}([S], [S]) = n-3 > 0$ because $n \geq 4$, but $-a^2\leq 0$. 
	\item If $n = 2$ then $Q_{M_2}([S], [S]) = 3$. On the other hand, $c([S]) = [S]$ and so $[S] = aH$ for some $a \in \Z$, and so $Q_{M_2}([S], [S]) = a^2 = 3$. There exists no $a \in \Z$ such that $a^2 = 3$. 
\end{enumerate}

In any case, $S$ must be non-orientable. Lemma \ref{lem:complex-aut-obstruction} implies that $f$ is not biholomorphic.
\end{proof}

The following lemma shows that the of the mapping classes $c \in \Mod(M_n)$ for $n \neq 2$ considered in Proposition \ref{prop:bihol-antibihol} are realizable by complex equivariant connected sums.
\begin{lem}\label{lem:n-geq-1-cecs}
If $n = 3$, let
\[
	c = \RRef_H \RRef_{E_1-E_2}\in \Mod(M_3).
\]
If $n \geq 4$, let 
\[
	c = \RRef_H \prod_{k=1}^{n-1}\RRef_{E_k}\in \Mod(M_n).
\]
Then $c$ is realizable by a complex equivariant connected sum. 
\end{lem}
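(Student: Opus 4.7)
The plan is to build $(M_n, f)$ as an iterated complex equivariant connected sum of $\CP^2$ with $n$ copies of $\overline{\CP^2}$, each carrying an explicit holomorphic or anti-holomorphic involution. Since $c$ acts on $H_2(M_n;\Z)$ by $H \mapsto -H$, $E_k \mapsto -E_k$ for $k < n$, and $E_n \mapsto E_n$, the strategy is to realize these signs factorwise. The base case $n=1$ is the construction of $c_1$ already carried out in Proposition \ref{prop:m-star-and-m1}.

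First I specify the involutions on the factors. On $\CP^2$ take complex conjugation $\tau_0 : [X:Y:Z] \mapsto [\bar X : \bar Y : \bar Z]$, an orientation-preserving anti-biholomorphism with fixed set $\RP^2$, acting as $-1$ on $H_2(\CP^2;\Z)$. On $\overline{\CP^2}_k$ for $1 \leq k \leq n-1$, take the same complex conjugation $\tau_k$, an anti-biholomorphism acting as $-1$ on $H_2(\overline{\CP^2}_k;\Z) = \Z\{E_k\}$ with fixed set $\RP^2$. On the final copy $\overline{\CP^2}_n$, take the biholomorphism $\phi : [X:Y:Z] \mapsto [-X:Y:Z]$; since every biholomorphism of $\CP^2$ lies in the connected group $\PGL_3(\C)$ and so acts trivially on $H_2$, the map $\phi$ acts as $+1$ on $E_n$, with fixed set $\{[0:Y:Z]\} \sqcup \{[1:0:0]\} \cong \CP^1 \sqcup \{\mathrm{pt}\}$.

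Next I check the matching of tangential representations. At any fixed point on a 2-dimensional component (any point of the $\RP^2$ for $\tau_0$, $\tau_k$, or any point of the $\CP^1$ for $\phi$) the $\Z/2\Z$-representation on the 4-dimensional tangent space splits into a trivial 2-plane and a $(-1)$-eigenplane, so is conjugate to $\mathrm{diag}(1,1,-1,-1)$ in suitable real coordinates. Any two such representations are therefore equivalent by a $\Z/2\Z$-equivariant block-diagonal linear isomorphism, and by negating one block I can arrange the isomorphism to be orientation-reversing — precisely the gluing data required by Section \ref{sec:cecs}. I then assemble iteratively per Definition \ref{defn:cecs}(2) with $G = \langle c \rangle \cong \Z/2\Z$, $H = G$, $m = 1$ at each step, starting from the base factors $(\CP^2, \tau_0)$, $(\overline{\CP^2}_k, \tau_k)$, $(\overline{\CP^2}_n, \phi)$ which qualify as complex equivariant connected sums by Definition \ref{defn:cecs}(1). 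First form $(M_1, f_1) = (\CP^2 \# \overline{\CP^2}_1, G)$, then inductively $(M_k, f_k) = (M_{k-1} \# \overline{\CP^2}_k, G)$ for $k \leq n-1$, and finally $(M_n, f_n) = (M_{n-1} \# \overline{\CP^2}_n, G)$ using $\phi$ on the last summand. Each equivariant connect sum is performed at fixed points lying on 2-dimensional fixed components, and such a connect sum preserves a 2-dimensional fixed component at the neck, leaving a suitable fixed point available for the next step. Reading off the factorwise action on $H_2(M_n;\Z)$ gives $H \mapsto -H$, $E_k \mapsto -E_k$ for $k < n$, $E_n \mapsto E_n$, i.e.\ $[f_n] = c$.

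The main obstacle I expect is bookkeeping across the iteration: verifying that at every intermediate stage a 2-dimensional component of the fixed set remains and that its local tangential representation retains the standard form $\mathrm{diag}(1,1,-1,-1)$. The uniform local shape of the actions at the chosen gluing points makes this largely routine, and the whole construction reduces to patching together the $n = 1$ picture of Proposition \ref{prop:m-star-and-m1}(3) along a chain of anti-biholomorphic involutions on the middle $\overline{\CP^2}$ summands.
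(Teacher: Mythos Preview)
Your proposal is correct and uses the same ingredients as the paper: complex conjugation on the $\CP^2$-type factors to produce the $-1$'s on $H, E_1, \dots, E_{n-1}$, and the biholomorphism $[X:Y:Z]\mapsto[-X:Y:Z]$ on the final $\overline{\CP^2}$ to leave $E_n$ fixed, glued along points on $2$-dimensional fixed components where the tangential $\Z/2\Z$-representation is $\mathrm{diag}(1,1,-1,-1)$.

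The organizational difference is that the paper does the construction in a single step: it realizes the $-I$ piece on $M_{n-1}$ directly as complex conjugation on the complex surface $\Bl_P\CP^2$ (where $P$ is an $(n-1)$-tuple of real points), so that $(M_{n-1},\tilde\tau)$ already satisfies clause (1) of Definition~\ref{defn:cecs}, and then performs exactly one equivariant connected sum with $(\overline{\CP^2},\phi)$. Your iterative version unpacks this into $n$ successive connect sums and therefore has to carry along the inductive check that a $2$-dimensional fixed component survives each step (which it does, since connect-summing two such components along a fixed circle yields another surface). The paper's packaging avoids that bookkeeping entirely; your version has the mild advantage of never invoking the blowup construction and staying purely in the connected-sum formalism.
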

\begin{proof}
If $n \neq 3$, consider the blowup $\Bl_P \CP^2$ where $P \subseteq \CP^2$ is a subset of $(n-1)$-points that are fixed by $\tau: \CP^2 \to \CP^2$ given by complex conjugation. Then $\tau$ induces an anti-biholomorphism $g: \Bl_P \CP^2 \to \Bl_P \CP^2$ of order $2$ which induces the negation map on $H_2(M_{n-1}; \Z)$ and fixes a surface $S \subseteq M_{n-1}$. If $n = 3$, consider $c_0 = \RRef_{H} \RRef_{E_1-E_2} \in \Mod(M_2)$. Theorem \ref{thm:m2} and Corollary \ref{cor:finite-order-m2} shows that $c_0$ is realized by a complex equivariant connected sum with a smooth $\Z/2\Z$-action given by a diffeomorphism $g$. By \cite[Corollary 1.4]{edmonds}, $\chi(\Fix(g)) = 1$ and by Proposition \ref{prop:edmonds}, $\beta_1(\Fix(g)) = 1$. Therefore, $\Fix(g) \cong S := \RP^2 \subseteq M_2$.

On the other hand, consider the diffeomorphism $f: \overline{\CP^2} \to \overline{\CP^2}$ defined by $[A:B:C] \mapsto [-A:B:C]$; it fixes a surface $\mathbb S^2 \subseteq \overline{\CP^2}$. 

For any $n \geq 3$, pick a point $p \in S \subseteq M_{n-1}$ fixed by $g$ and $q \in \mathbb S^2 \subseteq \overline{\CP^2}$. Both $dg_p$ and $df_q$ act by the linear maps $(a,b,c,d) \mapsto (-a, -b, c, d)$ with respect to appropriate positive bases $B_{M_n}$ and $B_{\overline{\CP^2}}$ of $T_pM_{n-1}$ and $T_q \overline{\CP^2}$ respectively. The tangential representations $\langle g \rangle \to \SO(T_p M_{n-1})$ and $\langle f \rangle \to \SO(T_q \overline{\CP^2})$ therefore are equivalent by an orientation-reversing isomorphism $T_pM_{n-1} \to T_q \overline{\CP^2}$ given by $(a, b, c, d) \mapsto (b, a, c, d)$ with respect to the bases $B_{M_n}$ and $B_{\overline{\CP^2}}$. Therefore, there exists a complex equivariant connected sum $M_{n-1} \# \overline{\CP^2}$ with a smooth $\Z/2\Z$-action given by a diffeomorphism $g \# f$ acting by $g$ on $M_{n-1} - \{p\} \subseteq M_{n-1} \# \overline{\CP^2}$ and acting by $f$ on $\overline{\CP^2} - \{q\} \subseteq M_{n-1} \# \overline{\CP^2}$. By construction, $g \# f$ acts by negation on the first factor $H_2(M_{n-1}; \Z)$ of the direct sum $H_2(M_n; \Z) \cong H_2(M_{n-1}; \Z) \oplus H_2(\overline{\CP^2}; \Z)$ if $n \geq 4$ and by $c_0$ if $n = 3$. Moreover, $g\# f$ acts by the identity on the second factor $H_2(\overline{\CP^2}; \Z)$. This is precisely the action of $c$ on $H_2(M_n; \Z)$.
\end{proof}

Finally, we turn to the complex Nielsen realization problem for all rational manifolds. 
\begin{proof}[Proof of Theorem \ref{thm:complex-nielsen}]
Consider $c \in \Mod(M_0)$ acting on $H_2(M_0; \Z)$ by negation. Let $f$ be any diffeomorphism of order $2$ with $[f] = c$. By Remark \ref{rmk:euler-char}, $\chi(\Fix(f)) = 1$ so by Proposition \ref{prop:edmonds}, the fixed set $\Fix(f)$ satisfies
\[
	\beta_1(\Fix(f)) = 1, \quad \beta_0(\Fix(f)) + \beta_2(\Fix(f)) = 2.
\]
Combining with the fact that $\Fix(f)$ is a disjoint union of surfaces and isolated points implies that $\Fix(f) \cong \RP^2$. By Lemma \ref{lem:complex-aut-obstruction}, $f$ is not biholomorphic for any complex structure of $M$.

Consider $c \in \Mod(M_*)$ acting on $H_2(M_*; \Z)$ by negation. Let $f$ be any diffeomorphism of order $2$ with $[f]= c$. By a result of Friedman--Qin (\cite[Corollary 0.2]{friedman--qin}), any complex structure on $M_*$ turns $M_*$ into a minimal rational surface. Therefore, $M_*$ is a Hirzebruch surface $\mathbb F_n$ for some $n \equiv 0 \pmod 2$ by \cite[Theorems 3.4.6, 3.4.8]{gompf--stipsicz}. Consider $\mathbb F_n$ as the $\CP^1$-bundle over $\CP^1$ with the projection $p: \mathbb F_n \to \CP^1$. Let $C \subseteq \mathbb F_n$ be the image of a section of $p$. Then $C$ is a complex submanifold of $\mathbb F_n$ isomorphic to $\CP^1$ and $f(C)$ is a smooth submanifold of $\mathbb F_n$ diffeomorphic to $\CP^1$. The restriction $(p \circ f)|_{C}: \CP^1 \to \CP^1$ has degree $-1$ because $(p\circ f)_*([C]) = p_*\circ f_*([C]) = -[\CP^1]$. Therefore $(p \circ f)|_{C}$ is not holomorphic because it has negative degree. Because $C$ is a complex submanifold of $\mathbb F_n$ and $p$ is holomorphic, $f$ is not biholomorphic.

For $M = M_0$ and $M_*$, Proposition \ref{prop:m-star-and-m1} shows that there is a section of the map $\pi: \Diff^+(M) \to \Mod(M)$ and $\Mod(M)$ is realized by a complex equivariant connected sum. Therefore, the mapping class $c \in\Mod(M)$ considered above is realizable by a complex equivariant connected sum.

It remains to consider the cases $M = M_n$ with $n \geq 1$. Let $c \in \Mod(M)$ be the mapping class given in the statement of Proposition \ref{prop:bihol-antibihol}. Then $c$ is not realizable by a biholomorphism or an anti-biholomorphism of order $2$ of any complex structure of $M$. For $M = M_1$, Proposition \ref{prop:m-star-and-m1} shows that there is a section of the map $\pi: \Diff^+(M) \to \Mod(M)$ and $\Mod(M)$ is realized by a complex equivariant connected sum. For $M = M_2$, Theorem \ref{thm:m2} and Corollary \ref{cor:finite-order-m2} show that $c$ is realized by a complex equivariant connected sum. For $M = M_n$ with $n \geq 3$, Lemma \ref{lem:n-geq-1-cecs} shows that $c$ is realized by a complex equivariant connected sum. 
\end{proof} 

\small
\bibliographystyle{alpha}
\bibliography{nielsen-high}

\begin{thebibliography}{CLW21}

\bibitem[BK19]{baraglia--konno}
David Baraglia and Hokuto Konno.
\newblock {A note on the Nielsen realization problem for K3 surfaces}.
\newblock {\em Proc. Am. Math. Soc., to appear}, 2019.

\bibitem[Bre73]{bredon}
Glen~E. Bredon.
\newblock {\em Introduction to compact transformation groups}.
\newblock Academic Press, Inc., 1973.

\bibitem[CLW21]{chen--li--wu}
Weimin Chen, Tian-Jun Li, and Weiwei Wu.
\newblock Symplectic rational {$G$}-surfaces and equivariant symplectic cones.
\newblock {\em J. Differential Geom.}, 119(2):221, 260, 2021.

\bibitem[DI09]{dolgachev--iskovskikh}
Igor~V. Dolgachev and Vasily~A. Iskovskikh.
\newblock {\em {Finite Subgroups of the Plane Cremona Group}}, pages 443--548.
\newblock Birkh{\"a}user Boston, 2009.

\bibitem[Dol12]{dolgachev}
Igor~V. Dolgachev.
\newblock {\em {Classical algebraic geometry: a modern view}}.
\newblock {Cambridge University Press}, 2012.

\bibitem[Edm89]{edmonds}
Allen~L. Edmonds.
\newblock Aspects of group actions on four-manifolds.
\newblock {\em {Topol. Its Appl.}}, 31:109--124, 1989.

\bibitem[Fen48]{fenchel}
W.~Fenchel.
\newblock Estensioni di gruppi discontinui e trasformazioni periodiche delle
  superficie.
\newblock {\em Atti Accad. Naz Lincei. Rend. Cl. Sci. Fis. Mat. Nat.},
  8(5):326--329, 1948.

\bibitem[FL21]{farb--looijenga}
Benson Farb and Eduard Looijenga.
\newblock {The Nielsen realization problem for K3 surfaces}.
\newblock {\em J. Differ. Geom., to appear}, 2021.
\newblock arXiv:2104.08187.

\bibitem[FM88]{friedman--morgan}
Robert Friedman and John~W. Morgan.
\newblock On the diffeomorphism types of certain algebraic surfaces. {I}.
\newblock {\em J. Differ. Geom.}, 27:297--369, 1988.

\bibitem[FQ95]{friedman--qin}
Robert Friedman and Zhenbo Qin.
\newblock On complex surfaces diffeomorphic to rational surfaces.
\newblock {\em Invent. Math.}, 120:81--117, 1995.

\bibitem[Fre82]{freedman}
Michael~Hartley Freedman.
\newblock {The topology of four-dimensional manifolds}.
\newblock {\em J. Differ. Geom.}, 17:357--453, 1982.

\bibitem[Fri91]{fricke}
Robert Fricke.
\newblock {Ueber eine besondere Classe discontinuirlicher Gruppen reeller
  linearer Substitutionen}.
\newblock {\em Math. Ann.}, 38(1):50--81, 1891.

\bibitem[GS99]{gompf--stipsicz}
Robert~E. Gompf and Andr\'as~I. Stipsicz.
\newblock {\em {$4$-Manifolds and Kirby Calculus}}.
\newblock {American Mathematical Society}, 1999.

\bibitem[HT04]{hambleton--tanase}
Ian Hambleton and Mihail Tanase.
\newblock Permutations, isotropy and smooth cyclic group actions on definite
  $4$-manifolds.
\newblock {\em Geom. Topol.}, 8:475--509, 2004.

\bibitem[Hum90]{humphreys}
James~E. Humphreys.
\newblock {\em {Reflection groups and Coxeter groups}}.
\newblock {Cambridge University Press}, 1990.

\bibitem[HZ74]{hirzebruch--zagier}
F.~Hirzebruch and D.~Zagier.
\newblock {\em {The Atiyah--Singer Theorem and Elementary Number Theory}}.
\newblock {Publish or Perish, Inc.}, 1974.

\bibitem[Ker83]{kerckhoff}
Steven~P. Kerckhoff.
\newblock The {N}ielsen realization problem.
\newblock {\em Ann. Math.}, 117(2):235--265, 1983.

\bibitem[Kon22]{konno}
Hokuto Konno.
\newblock {Dehn twists and the Nielsen realization problem for spin
  $4$-manifolds}.
\newblock {\em Algebr. Geom. Topol., to appear}, 2022.

\bibitem[Nie43]{nielsen}
Jakob Nielsen.
\newblock Abbildungsklassen endlicher ordnung.
\newblock {\em Acta Math.}, 75:23--115, 1943.

\bibitem[Qui86]{quinn}
Frank Quinn.
\newblock Isotopy of $4$-manifolds.
\newblock {\em J. Differ. Geom.}, 24:343--372, 1986.

\bibitem[RS77]{raymond--scott}
Frank Raymond and Leonard~L. Scott.
\newblock {Failure of Nielsen's theorem in higher dimensions}.
\newblock {\em Arch. Math. (Basel)}, 29:643--654, 1977.

\bibitem[Thu97]{thurston}
William~P. Thurston.
\newblock {\em {Three-Dimensional Geometry and Topology}}, volume~1.
\newblock {Princeton University Press}, 1997.

\bibitem[Vin72]{vinberg}
\'E.~B. Vinberg.
\newblock On groups of unit elements of certain quadratic forms.
\newblock {\em Math. USSR Sbornik}, 16(1):17--35, 1972.

\bibitem[Wal64a]{wall-diffeos}
C.T.C. Wall.
\newblock Diffeomorphisms of $4$-manifolds.
\newblock {\em J. London Math. Soc.}, s1-39(1):131--140, 1964.

\bibitem[Wal64b]{wall-indefinite-orthogonal}
C.T.C. Wall.
\newblock On the orthogonal groups of unimodular quadratic forms. {II.}
\newblock {\em J. Reine Angew. Math.}, 1964(213):122--136, 1964.

\end{thebibliography}

\bigskip
\noindent
Seraphina Eun Bi Lee \\
Department of Mathematics \\
University of Chicago \\
\href{mailto:seraphinalee@uchicago.edu}{seraphinalee@uchicago.edu}

\end{document}